\SetMathAlphabet{\mathit}{sans}{OT1}{cmss}{m}{sl} 
\SetMathAlphabet{\mathbf}{sans}{OT1}{cmss}{bx}{n}
\SetMathAlphabet{\mathtt}{sans}{OT1}{cmtl}{m}{n}
\renewcommand{\arraystretch}{.5} 
\let\tilde=\widetilde
\newtheorem{theorem}{Theorem}
\newtheorem{proposition}{Proposition}
\newtheorem{lemma}{Lemma}
\newtheorem{corollary}{Corollary}
\newtheorem{property}{Property}
\renewcommand{\arraystretch}{1.5}
\DeclareMathOperator*{\argmax}{\arg\,\max} 
\begin{document}


\title{ 
  \fontsize{15}{15}\selectfont
  \textbf{Testing for Common Breaks in a Multiple Equations System}\thanks{
    We thank the Editor, Oliver Linton, an Associate Editor and 
    three anonymous referees for their constructive comments,
    which improved the paper. 
    We would like to thank 
    Jushan Bai,  
    Alastair Hall, 
    Eiji Kurozumi,
    James Morley, 
    Zhongjun Qu, 
    Mototsugu Shintani,
    Denis Tkachenko,
    seminar participants at Boston University and 
    participants at
    the 2009 Far East and South Asia Meeting of the Econometric Society
    for useful comments. 
    We are also grateful to Douglas
    Sondak for advices on the computations.
    Oka gratefully acknowledges
    the financial support from Singapore Ministry of Education
    Academic Research Fund Tier 1 (FY2015-FRC3-003)
    and
    also
    gratefully acknowledges the financial support from
    Monash Business School.
  }
}

\author{
  \textbf{Tatsushi Oka}\thanks{
    Department of Econometrics and Business Statistics, Monash University    
(\texttt{tatsushi.oka@monash.edu}).} \\
 \and \textbf{Pierre Perron}\thanks{%
Department of Economics, Boston University, USA 
 (\texttt{perron@bu.edu}).} \\
\vspace{0.05cm}
}

\date{
  \today
}
\maketitle

\begin{abstract}
The issue addressed in this paper is that of testing for common breaks
across or within equations of a multivariate system. Our framework is very general and allows
integrated regressors and trends as well as stationary regressors. The null
hypothesis is that 
breaks in different parameters 
occur at common locations and are separated by some positive fraction of the sample size
unless they occur across different equations.
Under the alternative hypothesis, the break dates across parameters are not the same 
and also need not be separated by a positive fraction of the sample size
whether within or across equations. The test considered is the quasi-likelihood ratio test
assuming normal errors, though as usual the limit distribution of the test
remains valid with non-normal errors. 
Of independent interest,
we provide results about the rate of
convergence of the estimates when searching over all possible partitions subject only to the
requirement that each regime contains at least as many observations as some positive fraction of the sample size,
allowing break dates not separated by a positive fraction of the sample size
across equations.  
Simulations show that the test has good finite sample
properties. 
We also provide an application to issues related to 
level shifts and persistence for 
various measures of inflation
to illustrate its usefulness. 
 
\vspace{1cm}
\noindent
\textbf{Keywords}: change-point, segmented regressions, break dates,
hypothesis testing, multiple equations systems.

\vspace{0.5cm}
\noindent
\textbf{JEL codes}: C32

\end{abstract}

\thispagestyle{empty}\setcounter{page}{0}\baselineskip=18.0pt\newpage

\pagestyle{plain}

\section{Introduction}

Issues related to structural change have been extensively studied in the statistics and econometrics literature
\citep[see][for comprehensive reviews]{CH1997Book, Perron2006HB}.
In the last twenty years or so, substantial advances have been made
in the econometrics literature to cover models at a level of generality that makes them relevant across time-series applications in the context of unknown change points. 
For example,  
\cite{Bai1994JTSA, Bai1997REStat}
studies the least squares estimation of a
single change point in regressions involving stationary and/or trending
regressors. 
\cite{BaiPerron1998Emtca, BaiPerron2003JAE}
extend the testing and estimation analysis to the case of multiple structural changes
and 
present an efficient algorithm. 
\cite{Hansen1992JBES}
and
\cite{KejriwalPerron2008JoE}
consider regressions with integrated variables.    
\cite{Andrews1993Emtca}
and 
\cite{HallSen1999JBES}
consider nonlinear models estimated by 
generalized method of moments.  
\cite{Bai1995ET, Bai1998JSPI}
studies structural changes in least absolute deviation regressions,
while  
\cite{Qu2008JoE},
\cite{SuXiao2008SPL}
and
\cite{OkaQu2011JoE}
analyze structural changes in regression quantiles.
\cite*{HallHanBoldea2012JoE} 
and 
\cite{PerronYamamoto2014, PerronYamamoto2015JAE}
consider   
structural changes in linear models with endogenous regressors.
Studies about structural changes in panel data models 
include 
\cite{Bai2010JoE}, \cite{Kim2011JoE}, \cite*{BaltagiFengKao2015JoE} 
and \cite{QianSu2016JoE} 
for linear panel data models
and 
\cite{BreitungEickmeier2011JoE},
\cite*{ChenEtAl2016RES}, 
\cite{CorradiSwanson2014JoE},
\cite{HanInoue2015ET}
and
\cite{YamamotoTanaka2015JoE}
for factor models.

The literature on structural breaks in a multiple equations system 
includes \cite{BaiLumsdaineStock1998}, \cite{Bai2000AEF} and \cite{QuPerron2007Emtca},
among others.  Their analysis relies on a common breaks assumption, under which 
breaks in different basic parameters (regression coefficients and elements of the covariance matrix of the errors) 
occur at a common location or are separated by some positive fraction of the sample size
(i.e., asymptotically distinct).\footnote{
  The concept of common breaks here is quite distinct from
  the notion of co-breaking or co-trending 
  \citep[e.g.,][]{HatanakaYamada2003Book, HendryMizon1998EE}. 
  In this literature, the focus is on whether
  some linear combination of series with breaks do not have a break, a concept
  akin to that of cointegration.
} 
\cite{BaiLumsdaineStock1998}
assume a single common break across equations   
for a multivariate system with stationary regressors and trends 
as well as for cointegrated systems.
For the case of multiple common breaks,  
\cite{Bai2000AEF} analyzes vector autoregressive models
for stationary variables 
and 
\cite{QuPerron2007Emtca}
cover multiple system equations, allowing for more general stationary regressors
and arbitrary restrictions across parameters. 
Under the framework of \cite{QuPerron2007Emtca},
\cite{KurozumiTu2011JoE} propose model selection procedures for 
a system of equations with multiple common breaks
and 
\cite{EoMorley2015QE} consider a confidence set for the common break date based on inverting the likelihood ratio test.
In this literature,
it has been documented that common breaks allow more precise estimates of the break dates in  multivariate systems.
Given unknown break dates, however, 
an issue of interest for most applications concerns the validity of the assumption of common breaks.\footnote{ 
  The common breaks assumption is also used in the literature on panel data
  \citep[e.g.][]{Bai2010JoE, Kim2011JoE, BaltagiFengKao2015JoE}.  
  In this paper, we consider a multiple equations system in which the number of 
  equations are relatively small, and thus panel data models are outside our scope. 
  However, testing for common breaks in a system with a large number of equations 
  is an interesting avenue for future research. 
}   
To our knowledge, no test has been proposed to address this issue.

Our paper addresses three outstanding issues about testing for common breaks.
First, we propose a quasi-likelihood ratio test under a very general framework.\footnote{
  One may also consider other type of tests, such as LM-type tests. 
  The literature on structural breaks, however, documents that  
  even though LM-type tests have simple asymptotic representations, 
  they tend to exhibit poor finite sample properties with respect to power. 
  Thus, this paper focuses on the LR test 
  \citep[see][for instance]{DP2008JoE, KimPerron2009JoE, PerronYamamoto2016ER}.
} We consider a multiple equations system under a likelihood framework with normal errors, though the limit distribution of the proposed test remains valid with non-normal, serially dependent and heteroskedastic errors. 
Our framework allows integrated regressors and trends as well as stationary regressors as in \cite{BaiLumsdaineStock1998}
and also accommodates multiple breaks and arbitrary restrictions across  parameters as in \cite{QuPerron2007Emtca}.
Thus, our results apply for general systems of multiple equations considered in existing studies.
A case not covered in our framework is when the regressors depend on the break date.
This occurs when considering joint segmented trends and this issue was analyzed in \cite{Kim2017WP}.

Second, we propose a test for common breaks not only across equations 
within a multivariate system, but also within an equation. 
As in \cite{BaiLumsdaineStock1998}, the issue of common breaks is often associated 
with breaks occurring across equations, whereas 
one may want to test for common breaks in the parameters within a regression equation,
whether a single equation or a system of multiple equations are considered. 
More precisely, the null hypothesis of interest is that some subsets of the
basic parameters share one or more common break dates,
so that each regime is separated by some positive fraction of the sample size. 
Under the alternative hypothesis, the break dates are not the same and also need not be separated by a positive fraction of the sample size, or be asymptotically distinct.

Third, we derive the asymptotic properties of the quasi-likelihood and the parameter estimates, 
allowing for the possibility that the break dates associated with different basic parameters may not be asymptotically distinct.  
This poses an additional layer of difficulty,
since existing studies establish the consistency and rate of convergence of estimators
only when the break dates are assumed to either have a common location or be asymptotically distinct,
at least under the level of generality adopted here.
Moreover, we establish the results in the presence of 
integrated regressors and trends as well as stationary regressors.
This is by itself a noteworthy contribution. 
These asymptotic results will allow us to derive the limit distribution of our test statistic
under the null hypothesis and also facilitate asymptotic power analyses 
under fixed and local alternatives. 
We can show that 
our test is consistent under fixed alternatives and also has non-trivial local power.

There is one additional layer of difficulty compared to Bai and
Perron (1998) or Qu and Perron (2007). In their analysis, 
it is possible to transform the limit distribution so that 
it can be evaluated using a closed form solution
and thus critical values can be tabulated.
Here, no such solution is available and we need to obtain 
critical values for each case through simulations.
This involves simulating the Wiener processes with 
consistent parameter estimates 
and 
evaluating each realization of the limit distribution 
with and without the restriction of common breaks. 
While it is conceptually straightforward
and 
quick enough to be feasible for common applications,  
the procedure needs to be repeated many times to obtain the
relevant quantities
and 
can be quite computationally intensive. 
This is because we need to search over many possible combinations of all the permutations of the break locations for each replication of the simulations. 
To reduce the computational burden, we propose an alternative 
procedure based on 
the particle swarm optimization method developed by \cite{Eberhart1995}
with 
the Karhunen-Lo{\`e}ve representation of stochastic processes. 
Our simulation results suggest that 
the test proposed has reasonably good size and power performance even 
in small samples under both computation procedures.
Also, we apply our test to inflation series,
following the work of \cite{Clark2006JAE} to illustrate its usefulness.

The remainder of the paper is as follows.
Section 2 introduces the models with and without the common breaks assumption 
and describes the estimation methods under the quasi-likelihood framework.
Section 3 presents the assumptions and asymptotic results including 
the asymptotic null distribution and asymptotic power analyses.
Section 4 examines the finite sample properties of our procedure
via Monte Carlo simulations.
Section 5 presents an empirical application
and Section 6 concludes.
An appendix contains all the proofs.

\section{Models and quasi-likelihood method}

In this section, we first introduce models for a multiple equations system  
with and without common breaks.
Subsequently, we describe the quasi-likelihood estimation method
assuming normal errors
and then propose the quasi-likelihood ratio test for common breaks. 
For illustration purpose, we also discuss some examples. 

As a matter of notation, \textquotedblleft $\overset{p}{\rightarrow }$%
\textquotedblright $\ $denotes convergence in probability, \textquotedblleft 
$\overset{d}{\rightarrow }$\textquotedblright $\ $convergence in
distribution and \textquotedblleft $\Rightarrow $\textquotedblright\ weak
convergence in the space $D[0,\infty)\ $under the Skorohod topology. 
We
use 
$\mathbb{R}$, 
$\mathbb{Z}$ 
and
$\mathbb{N}$ 
to denote the set of all real numbers, all integers and all positive integers,
respectively.
For a vector $x$, we use $\| \cdot \|$ to denote the Euclidean norm
(i.e., $\|x\|= \sqrt{x' x}$), while 
for a matrix $A$, we use the vector-induced norm 
(i.e., $\|A\|= \sup_{x \not = 0}\|A x\|/\|x\|$).
Define
the $L_{r}$-norm of a random matrix $X$ as $\left\Vert X\right\Vert
_{r}=(\sum_{i}\sum_{j}E\left\vert X_{ij}\right\vert ^{r})^{1/r}$ for $r\geq
1 $.
Also,
$a \wedge b=\min \{a,b\}$ 
and
$a \vee   b=\max \{a,b\}$ 
for any $a, b \in \mathbb{R}$. 
Let $\circ$ denote the Hadamard product (entry-wise product)
and 
let $\otimes$
denote the Kronecker product.
Define 
$\mathbbm{1}_{ \{\cdot \}}$
as 
the
indicator function taking value one when its argument is true, and zero otherwise 
and 
$e_{i}$ as a unit vector having 1 at the $i^{th}$ entry 
and 0 for the others.
We use the operator $\mathrm{vec}(\cdot)$ to convert a matrix into a column vector by stacking the columns of the matrix
and 
the operator $\mathrm{tr}(\cdot)$ to denote the trace of a matrix.
The largest integer not greater than $a \in \mathbb{R}$
is denoted by $[a]$
and 
the sign function is defined as 
$\mathrm{sgn}(a)= -1, 0, 1$  
if $a >0$, $a=0$ or $a <0$, respectively.

\subsection{The models with and without common breaks}

Let the data consist of observations 
$\{(y_{t}, x_{tT})\}_{ t=1}^{T}$,
where 
$y_{t}$ is an $n \times 1$ vector of dependent variables
and  
$x_{tT}$ is a $q \times 1$ vector of explanatory variables 
for $n, q \in \mathbb{N}$
with a subscript $t$ indexing a temporal observation 
and 
$T$ denoting the sample size.
We allow the regressors
$x_{tT}$ to include 
stationary variables,
time trends 
and 
integrated processes, 
while scaling by the sample size $T$ so that the order of all components is the
same.
In what follows, we consider
\begin{equation*} 
  x_{tT}=
  \big ( 
  z_{t}',
  \varphi(t/T)',
  T^{-1/2}w_{t}^{\prime }
  \big )'.
\end{equation*}%
Here,
$z_{t}$, $\varphi(t/T)$ and $w_{t}$ respectively denote 
vectors of stationary, trending and integrated variables 
with sizes being  $q_{z}{\times} 1$, $q_{\varphi}{\times} 1$
and  
$q_{w}{\times} 1$, 
so that  
$q\equiv q_{z}+q_{\varphi}+q_{w}$.\footnote{
  The normalization is simply a theoretical device to reduce notational burden. Without it, we would need to 
handle different convergence rates of the estimates
by
introducing 
additional notations.
} 
Also,
\begin{eqnarray*}
\varphi(t/T)
:=
[(t/T), (t/T)^{2}, \dots, (t/T)^{q_{\varphi}}]' 
\ \ \ \ \mathrm{and} \ \ \ \
w_{t} =w_{t-1}+u_{wt}, 
\end{eqnarray*}
where 
$w_{0}\ $is assumed, for simplicity, to be either $O_{p}(1)$ random
variables or fixed finite constants,
and 
$u_{wt}$ is a vector of unobserved random variables 
with zero means. 
We label the variables $z_{t}$ as $I(0)$
if the partial sums of the associated noise components
satisfy a functional central limit theorem,
while we label a variable as $I(1)$ if it is the
accumulation of an $I(0)$ process. 
We discuss in more details the specific conditions in Section 3.

We first explain the case of common breaks through a model 
in which all of the parameters including those of the covariance matrix of the errors
change, i.e., a pure structural change model.
The model of interest is a multiple equations system 
with $n$ equations and $T$ time periods, 
excluding the initial conditions if lagged dependent variables
are used as regressors.
We denote the break dates in the system by $T_{1}, \dots, T_{m}$
with $m$ denoting the total number of structural changes
and 
we use 
the convention that $T_{0}=0$ and $T_{m+1}=T$.

With a subscript $j$ indexing a regime for $j=1,...,m+1$, the model is given by 
\begin{equation}
  \label{basic-dgp}
  y_{t}=(x_{tT}^{\prime }\otimes I_{n})S\beta _{j}+u_{t}, 
  \ \ \mathrm{for} \ T_{j-1} +1 \le t \le T_{j},
\end{equation}
where 
$I_{n}$ is an $n \times n$ identity matrix, 
$S$ is an $n q \times p$ selection matrix with full column rank,
$\beta_{j}$ is a $p \times 1$ vector of unknown coefficients, 
and 
$u_{t}$ is an $n \times 1$ vector of errors having zero means and covariance matrix  
$\Sigma_{j}$.\footnote{ 
  An example of models involving stationary and integrated variables is the dynamic ordinary least squares method 
to estimate cointegrating vectors
\citep[e.g.][]{Saikkonen1991ET, SW1993Emtca}. 
} 
The selection matrix $S$
usually consists of elements that are $0$ or $1$ and, hence,
specifies which regressors appear in each equation,
although in principle it is allowed to have entries that are arbitrary constants.
To ease notation, define the $n \times p$ matrix 
$X_{tT}:=$ $S' (x_{tT}\otimes I_n)$ so that (\ref{basic-dgp}) becomes,
for $j=1,...,m+1$,
\begin{equation}
  \label{eq:dgp-x}
  y_{t}=X_{tT}^{\prime }\beta _{j}+u_{t} ,  
  \ \ \mathrm{for} \ T_{j-1} +1 \le t \le T_{j}.
\end{equation}

The set of basic parameters in the $j^{th}$ regime consists of the coefficients 
$\beta_{j}$ and the elements of the covariance matrix $\Sigma _{j}$,
and we denote it by  
$\theta_{j}:=(\beta_{j}, \Sigma_{j})$
for each regime $j = 1, \dots, m+1$.
We use 
$ \Theta_{j} 
  \subset 
  \mathbb{R}^{p} 
  \times 
  \mathbb{R}^{n\times n} 
$
to denote a parameter space for $\theta_{j}$
and 
we also define 
a product space
$\Theta:= \Theta_{1} \times \dots \times \Theta_{m+1}$
for  
$\theta:=(\theta_{1}, \dots, \theta_{m+1})$.
In model (\ref{eq:dgp-x}),
we allow for the imposition of a set of $r$ restrictions through 
a function $R: \Theta \to \mathbb{R}^{r}$, given by
\begin{equation}  
  \label{restrictions}
  R( \theta )=0.
\end{equation}
Note that the equation in (\ref{restrictions}) can impose restrictions
both within and across equations and regimes. 
Thus the model in (\ref{eq:dgp-x}) with some restrictions of the form (\ref{restrictions})
can accommodate structural break models 
other than a pure structural change model,   
such as partial structural change models in which 
a part of the basic parameters are constant across regimes.
For a discussion of how general the framework is, see Qu and Perron (2007).

Next, we consider a pure structural change model allowing for the possibility 
that the break dates are not necessarily common across basic parameters. 
In the equations system with the $p \times 1$ vector of coefficients,
we can assign each coefficient an index from 
$1$ to $p$
and we then group the $p$ indices into 
disjoint subsets
$\mathcal{G}_{1}, \dots, \mathcal{G}_{G}
\subset \{1, \dots, p\}$
with
$G$ standing for the total number of groups,
such that 
coefficients indexed by elements of 
$\mathcal{G}_{g}$
share
the same break dates 
for each group $g=1,\dots, G$
and 
$\cup _{g=1}^{G}\mathcal{G}_{g} = \{1,...,p\}$.
Given a collection $\{\mathcal{G}_{g}\}_{g=1}^{G}$,
we define, for $(g, j) \in \{1, \dots, G\} {\times} \{1,...,m+1\}$, 
\begin{eqnarray}
  \label{eq:beta}
  \beta _{gj} :=  
  \sum_{l \in \mathcal{G}_{g}} e_{l} \circ \beta _{j}.
\end{eqnarray}
Without loss of generality, we assume
that the elements of the covariance matrix $\Sigma_{j}$ have break
dates that are common to those in the last group $G$. If none of the
regression coefficients change at the same time as the elements of the covariance matrix $\Sigma_{j}$, then $\mathcal{G}_{G}$ is simply an empty set.\footnote{
  We assume that the different elements of the covariance matrix of the errors
  change at the same time.
  The results can be extended to the case where
  different parameters 
  have distinct break dates, although additional notations would be needed. 
  For the sake of notational simplicity, we only consider 
  the case where the break dates are common within all elements of the covariance matrix. 
}
Here, we introduce groups of basic parameters to 
accommodate a wide range of empirical applications under our framework.
Sometimes, researchers have economic models of interest or empirical knowledge
that suggest specific parameter groups having common breaks.
Even when one has no knowledge to form parameter groups, 
our analysis can be applied by considering
all basic parameters
as separate groups.

To denote the break date for regime $j$ and group $g$,
we use $k_{gj}$ 
for $(g, j) \in \{1, \dots, G\} \times \{1, \dots, m\}$
with the convention that $k_{g0}=0$ and $k_{g,m+1}=T$
for any $g= 1, \dots, G$.
Also, define a collection of break dates as,
\begin{eqnarray*}
  \mathcal{K}:=\{\mathcal{K}_{1}, \dots, \mathcal{K}_{G}\}
  \ \ \mathrm{with} \ \
  \mathcal{K}_{g}:=(k_{g1}, \dots, k_{gm})
  \ \mathrm{for} \ g = 1, \dots, G.
\end{eqnarray*}
The regression model can be expressed as 
one depending on time-varying basic parameters according to the collection $\mathcal{K}$:
\begin{equation}
  \label{eq:dgp-K}
  y_{t}=X_{tT}^{\prime }\beta _{t,\mathcal{K}}+u_{t},
\end{equation}%
where 
$\beta_{t,\mathcal{K}} 
  :=
  \sum_{g=1}^{G}\beta _{g, t, \mathcal{K}}
$
and 
$E[u_{t}u_{t}^{\prime }] = \Sigma_{t,\mathcal{K}}$ with  
\begin{eqnarray} 
  \label{eq:time-vary}
  \beta _{g, t,\mathcal{K} } := 
  \beta_{gj}
  \ \ \mathrm{for} \ 
  k_{g,j-1} + 1 \le  t \le k_{gj} 
  \ \ \ \mathrm{and} \ \ \
  \Sigma _{t,\mathcal{K}} :=\Sigma_{j}
  \ \ \mathrm{for} \  
  k_{G,j-1} +1 \le   t\leq k_{Gj},
\end{eqnarray}
for $(g, j) \in \{1, \dots, G\} {\times} \{1,...,m+1\}$. 
We also use  
$\theta_{t, \mathcal{K}}:= 
(\beta_{t, \mathcal{K}}, \Sigma_{t, \mathcal{K}})
$
to denote time-varying basic parameters depending on 
the collection of break dates $\mathcal{K}$.
Thus the restrictions (\ref{restrictions}) can be imposed on 
the system (\ref{eq:dgp-K}) to accommodate more general models 
with structural breaks as in the one with common breaks.

In model (\ref{eq:dgp-K}),
the basic parameters, break dates and the number of breaks are unknown 
and have to be estimated. 
To select the total number of structural changes,
we can apply existing sequential testing procedures 
or information criteria. 
For example, if the breaks are common within each equation under both null and alternative hypotheses, but may differ across equations (see Example 1 below),
sequential testing procedures proposed by \cite{BaiPerron1998Emtca}
can be used to select the number of structural changes in 
each equation of a system
\citep[see][p.~65, for a discussion of the statistical properties of such sequential procedures]{BaiPerron1998Emtca}.
In a similar way, the sequential testing procedure 
in \cite{QuPerron2007Emtca} can be applied 
for sets of equations of a system separately. 
In order to handle more complex cases,
we can alternatively use the Bayesian information criterion
or the minimum description length principle
as in \cite{KurozumiTu2011JoE}, \cite{Lee2000JASA}
and \cite{aue2011}. 
Because we use the likelihood framework,
a likelihood function with a relevant penalty 
can be computed with the use of 
genetic algorithms \citep[see][for example]{Davis2001},
which 
consistently selects the number of structural breaks,
as in \cite{Lee2000JASA} and \cite{aue2011}. 
Thus, our analysis in what follows focuses on unknown basic parameters and  breaks dates,
given a total number of structural changes.

We use a $0$ superscript to denote the true values of the parameters in 
both (\ref{eq:dgp-x}) and (\ref{eq:dgp-K}).
Thus,
the true basic parameters 
and 
break dates in (\ref{eq:dgp-x}) 
are denoted by 
$\{(\beta_{j}^{0},\Sigma_j^0)\}_{j=1}^{m+1}$ 
and 
$\{T_{j}^{0}\}_{j=1}^{m}$, respectively,
with the convention that $T_{0}^{0}=0$ and $T_{m+1}^{0}=T$,
whereas 
the ones in (\ref{eq:dgp-K}) 
are denoted by 
$\{\beta_{1j}^{0}, \dots, \beta_{Gj}^{0}, \Sigma_{j}^{0}\}_{j=1}^{m+1}$
and  
$\mathcal{K}_{g}^{0}:=(k_{g1}^{0},\dots, k_{gm}^{0})$
with $k_{g0}^{0}=0$ and $k_{g,m+1}^{0}=T$
for $g =1, \dots, G$. 
Also let 
$\mathcal{K}^{0}:=\{\mathcal{K}_{1}^{0}, \dots, \mathcal{K}_{G}^{0} \}$.
Given a collection of break dates $\mathcal{K}$,
let
$\theta_{t, \mathcal{K}}^{0}:= 
(\beta_{t, \mathcal{K}}^{0}, \Sigma_{t, \mathcal{K}}^{0})
$
with a $0$ superscript to denote 
time-varying true basic parameters 
$\theta^{0}$,
where
$\theta^{0}:=(\theta_{1}^{0}, \dots, \theta_{m+1}^{0})$
with  
$\theta_{j}^{0}
 := 
 (\beta_{j}^{0},\Sigma_j^0)
$
for $j=1, \dots, m+1$.

\subsection{The estimation and test under  the quasi-likelihood framework}

We consider the quasi-maximum likelihood estimation method 
with serially uncorrelated Gaussian errors
for model (\ref{eq:dgp-K}) with restrictions given by (\ref{restrictions}).\footnote{
  Our framework includes OLS-based estimation by setting the
  covariance matrix to be an identity matrix.  
}
Given the collection of break dates $\mathcal{K}$ and the basic parameters $\theta$,
the Gaussian quasi-likelihood function is defined as
\begin{equation*}
  L_{T}(\mathcal{K},\theta)
  :=
  \prod_{t=1}^{T}f(y_{t}|X_{tT},\theta_{t, \mathcal{K}}),
\end{equation*}
where
\begin{equation*}
  f(y_{t}|X_{tT}, \theta_{t, \mathcal{K}})
  :=
  \frac{
    1
  }{
    (2 \pi)^{n/2} |\Sigma_{t, \mathcal{K}}|^{1/2}
  }
  \exp
  \Big (
    -\frac{1}{2}
    \big \|
    \Sigma _{t,\mathcal{K}}^{-1/2}(y_{t}-X_{tT}^{\prime }\beta _{t,\mathcal{K}}) 
    \big \|^{2}
  \Big ).
\end{equation*}
To obtain maximum likelihood estimators, 
we impose a restriction on the set of permissible 
partitions with a trimming parameter $\nu>0$ as follows\footnote{
  For the asymptotic analysis, the trimming value
  $\nu$ can be an arbitrary small constant such that a positive fraction of the
  sample size $T\nu$ diverges at rate $T$.
}:
\begin{eqnarray*}
  \Xi_{\nu}
  :=
  \Big \{
  \mathcal{K}:
  \min_{1 \le g \le G}
  \min_{1 \le j \le m+1}
  (k_{gj} - k_{g,j-1} )\ge T\nu
  \Big \}.
\end{eqnarray*}
This set of permissible partitions ensures that 
there are enough observations between any break dates within the same group $\mathcal{K}_{g}$,
while it accommodates the possibility that 
the break dates across different groups are not separated by a positive fraction of the sample size.

We propose a test for common breaks under the quasi-likelihood framework. 
The null hypothesis of common breaks in model (\ref{eq:dgp-x}) can be stated as 
\begin{eqnarray}
  \label{eq:H0}
  H_{0}:
  \mathcal{K}_{g_{1}}^{0} = \mathcal{K}_{g_2}^{0}
  \ \ \ \mathrm{for \ all} \ g_{1}, g_{2}
  \in \{1, \dots, G\},
\end{eqnarray}
and the alternative hypothesis is 
\begin{eqnarray}
  \label{eq:H1-Alt}
  H_{1}:
  \mathcal{K}_{g_{1}}^{0} \not = \mathcal{K}_{g_2}^{0}
  \ \ \ \mathrm{for \ some} \ g_{1}, g_{2}
  \in \{1, \dots, G\}.
\end{eqnarray}
The set of permissible partitions
under the null hypothesis 
can be expressed as 
\begin{eqnarray*}
  \Xi_{\nu, H_{0}}
  :=
  \{
  \mathcal{K} 
  \in
  \Xi_{\nu}:
  \mathcal{K}_{1} = \cdots = \mathcal{K}_{G}
  \}. 
\end{eqnarray*}

The test considered is simply the quasi-likelihood ratio test that compares the
values of the likelihood function with and without the common breaks restrictions.
The quasi-maximum likelihood estimates 
under the null hypothesis, 
denoted by $(\tilde{\mathcal{K}}, \tilde{\theta} )$,
can be obtained  
from the following maximization problem 
with a restricted set of candidate break dates:
\begin{equation*}
  (\tilde{\mathcal{K}}, \tilde{\theta} )
  :=
  \argmax_{(\mathcal{K},\theta ) \in {\Xi}_{\nu, H_{0}} \times \Theta}
  \log L_{T}(\mathcal{K},\theta)
  \ \ \  
  \mathrm{s.t.}
  \ \ 
  R(\theta)  = 0,
\end{equation*}
where 
$\tilde{\mathcal{K}}:= (\tilde{\mathcal{K}}_{1}, \dots, \tilde{\mathcal{K}}_{G})$
with 
$\tilde{\mathcal{K}}_{g}:=(\tilde{k}_{1}, \dots, \tilde{k}_{m})$
for all $g=1,\dots, G$,
$\tilde{\theta}:=(\tilde{\beta}, \tilde{\Sigma})$
with 
$\tilde{\beta}:=(\tilde{\beta}_{1}, \dots, \tilde{\beta}_{m+1})$
and 
$\tilde{\Sigma}:=(\tilde{\Sigma}_{1}, \dots, \tilde{\Sigma}_{m+1})$.
Also, the quasi-maximum likelihood estimates under the alternative,
denoted by $(\hat{\mathcal{K}}, \hat{\theta} )$,
are obtained from the following problem: 
\begin{equation}
  \label{k-hat}
  (\hat{\mathcal{K}}, \hat{\theta} )
  :=
  \argmax_{(\mathcal{K},\theta ) \in \Xi_{\nu} \times \Theta}
  \log L_{T}(\mathcal{K},\theta)
  \ \ \  
  \mathrm{s.t.}
  \ \ 
  R(\theta)  = 0,
\end{equation}
where
$\hat{\mathcal{K}}:= (\hat{\mathcal{K}}_{1}, \dots, \hat{\mathcal{K}}_{G})$
with 
$\hat{\mathcal{K}}_{g}:=(\hat{k}_{g1}, \dots, \hat{k}_{gm})$
for $g=1,\dots, G$,
$\hat{\theta}:=(\hat{\beta}, \hat{\Sigma})$
with 
$\hat{\beta}:=(\hat{\beta}_{1}, \dots, \hat{\beta}_{m+1})$
and 
$\hat{\Sigma}:=(\hat{\Sigma}_{1}, \dots, \hat{\Sigma}_{m+1})$.
Using the estimates $\hat{\theta}$, we can define 
$\hat{\beta}_{gj}$ as in (\ref{eq:beta})
and 
$\hat{\theta}_{t, \mathcal{K}}:= 
(\hat{\beta}_{t, \mathcal{K}}, \hat{\Sigma}_{t, \mathcal{K}})
$
as in (\ref{eq:time-vary}) 
given a collection of break dates $\mathcal{K}$.

We define the quasi-likelihood ratio test for common breaks as  
\begin{eqnarray*}
  CB_{T} 
  := 
  2 
  \{
  \log L_{T}(\hat{\mathcal{K}},\hat{\theta}) 
  -
  \log L_{T}(\tilde{\mathcal{K}},\tilde{\theta})
  \}. 
\end{eqnarray*} 
For the asymptotic analysis, it is useful to 
employ a normalization by using the log-likelihood function evaluated at
the true parameters $(\mathcal{K}^{0}, \theta^{0})$
and we consider 
\begin{eqnarray*}
  CB_{T}
  =   
  2\{
  \ell_{T}(\hat{\mathcal{K}},\hat{\theta}) 
  -
  \ell_{T}(\tilde{\mathcal{K}},\tilde{\theta})
  \},
\end{eqnarray*}
where 
$  \ell_{T}(\mathcal{K}, \theta)
  :=
  \log L_{T}(\mathcal{K},\theta) 
  -
  \log L_{T}(\mathcal{K}^{0},\theta^{0})
$
for any $(\mathcal{K}, \theta) \in \Xi_{\nu} \times \Theta$.
The common break test $CB_{T}$ depends on two log-likelihoods
with and without the common breaks assumption.
The break date estimates $\tilde{\mathcal{K}}$ under the null hypothesis are required to either 
have common locations 
or 
be separated by a positive fraction of the sample size.
Without common breaks restrictions, however, 
the break date estimates $\hat{\mathcal{K}}$ are
simply allowed to be distinct but not necessarily separated by a positive
fraction of the sample size across groups.
This will be important since the setup of 
\cite{Bai2000AEF}
and
\cite{QuPerron2007Emtca} requires the maximization to be taken over asymptotically
distinct elements and their proof for the convergence rate of the estimates
relies on this premise. Hence, we will need to provide a detailed proof of
the convergence rate under this less restrictive
maximization problem (see Section 3).

\subsection{Examples}

Given that the notation is rather complex, it is useful to illustrate the
framework explained in the preceding subsection via examples. 

\vspace{0.2cm}
\noindent
\textbf{Example 1 (changes in intercepts):}
We consider a two-equations system of autoregressions
with structural changes in intercepts,
for $j=1,2$,
\begin{eqnarray*}
  y_{1t} = \mu_{1 j} +\alpha_{1} y_{1,t-1} +u_{1t}
  \ \ \ \mathrm{and} \ \ \
  y_{2t} = \mu_{2 j} +\alpha_{2} y_{2,t-1}+u_{2t},
  \ \ 
  \mathrm{for} 
  \ T_{j-1}+1 \le t \le T_{j},
\end{eqnarray*}
where $(u_{1t}, u_{2t})'$ have a covariance matrix $\Sigma$.
In this model, 
the basic parameters except the intercepts are assumed to be constant
and
the intercepts change at a common break date $T_{1}$.
In equation (\ref{basic-dgp}),
we have 
$x_{tT}=(1, y_{1,t-1}, y_{2, t-1})'$, 
$\beta_{j}=(\mu_{1j}, \alpha_{1j}, \mu_{2j}, \alpha_{2j})'$ 
and 
$E[u_{t} u_{t}'] = \Sigma_{j}$.
The selection matrix $S=<s_{ij}>$
is a $6\times 4$ matrix taking value 1 at the entries $s_{11}$,$s_{22}$, $s_{33}$ and $s_{64}$
and 0 elsewhere.
Also, by setting 
$R(\theta) =
\big(\alpha_{11}-\alpha_{12}, \alpha_{21}-\alpha_{22}, 
  \mathrm{vec}(\Sigma_{1})-  
  \mathrm{vec}(\Sigma_{2}) 
\big )' = 0$
in  (\ref{restrictions}),
we impose restrictions on the basic parameters
so that 
a partial structural change model is considered 
with no changes in the 
autoregressive parameters and the 
covariance matrix of the errors. 
On the other hand, when we allow the possibility that 
break dates can differ across the two equations
as in the model (\ref{eq:dgp-K}), we consider 
the following system, for $j=1,2$,
\begin{eqnarray*}
  y_{1t} &=& \mu_{1 j} +\alpha_{1} y_{1,t-1} +u_{1t}, 
  \ \ \mathrm{for} \ k_{1,j-1} +1 \le t \le k_{1j},\\
  y_{2t} &=& \mu_{2 j} +\alpha_{2} y_{2,t-1}+u_{2t},
  \ \ \mathrm{for} \  k_{2,j-1} + 1 \le t \le k_{2j}. 
\end{eqnarray*}
Here, we separate $\beta_{j}$
into 
$\beta_{1j}=(\mu_{1j}, \alpha_{1j}, 0, 0)'$
and 
$\beta_{2j}=(0, 0, \mu_{2j}, \alpha_{2j})'$, 
so that 
we can set 
$\mathcal{G}_{1}=\{1, 2\}$ and 
$\mathcal{G}_{2}=\{3, 4\}$.  
We have two possibly distinct break dates $k_{11}$ and $k_{21}$
for the parameter groups $\{\beta_{1j}\}_{j=1}^{2}$ and $\{(\beta_{2j}, \Sigma_{j})\}_{j=1}^{2}$, respectively.
We address the issue of testing the null hypothesis 
$H_{0}: k_{11} = k_{21}$
against the alternative hypothesis 
$H_{1}: k_{11} \not = k_{21}$.

\vspace{0.2cm}
\noindent
\textbf{Example 2 (a single  equation model):}
Consider a single equation model:
\begin{eqnarray*}
  y_{1t} &=& \mu + \alpha_{j} z_{1,t}+ 
  \gamma_{j}(t/T)  + \rho_{j} T^{-1/2} w_{1t} + u_{1t}, 
\end{eqnarray*}
for $T_{j-1} +1 \le t \le T_{j}$ with $j = 1, 2, 3$,
where
$u_{1t}$ denotes the error term 
with 
$E[u_{1t}] = 0$
and 
$E[u_{1t}^{2}] = \sigma_{j}^{2}$.
In this example, the basic parameters other than the intercepts 
have two structural changes.
Under model (\ref{eq:dgp-x}) with break dates $T_{1}$
and $T_{2}$, we have 
$x_{tT}=(1, z_{1t}, t/T, T^{-1/2}w_{1t})'$, 
$S= I_{4}$,
$\beta_{j}=(\mu_{j}, \alpha_{j}, \gamma_{j}, \rho_{j})'$. 
Restrictions of the form (\ref{restrictions})
are imposed by the function
$R(\theta) =
(\mu_{1} - \mu_{2}, \mu_{2} - \mu_{3}
)' = 0$.
We consider a test for common breaks against the 
alternative that all coefficients change at distinct break dates, 
while the coefficient $\rho_{j}$ and the variance $\sigma_{j}^{2}$ 
change at the same break dates.
In this case, 
we separate $\beta_{j}$ into three vectors 
$\beta_{1j}=(\mu_{j}, \alpha_{j}, 0, 0)$, 
$\beta_{2j}=(0, 0, \gamma_{j}, 0)$
and 
$\beta_{3j}=(0, 0, 0, \rho_{j})$. 
For these parameters groups, we assign a set of break dates 
$\mathcal{K}_{g}=(k_{g1}, k_{g2})$ for $g=1,\dots, 3$
and 
we set 
$\mathcal{G}_{1}=\{1, 2\}$, 
$\mathcal{G}_{2}=\{3\}$
and 
$\mathcal{G}_{3}=\{4\}$.
The break dates for the last group, $\mathcal{K}_{3}$, are also the ones for the variance. 
This example shows that our framework can accommodate common breaks not only across equations in a system but also within an equation.

\section{Asymptotic results}

This section presents the relevant asymptotic results. 
We first provide the convergence rates of the estimates of the break dates and the basic parameters,
allowing for the possibility that 
the break dates of different basic parameters may not be asymptotically distinct.
This condition is substantially less restrictive than 
the ones usually assumed in the existing literature
and   
particularly includes the assumption of common breaks as a special case.
Next, we provide the limiting distribution of the quasi-likelihood ratio test for common breaks under the null hypothesis. 
Finally, we provide asymptotic power analyses of the test under 
a fixed alternative as well as a local one.
Our result shows non-trivial asymptotic power.

\subsection{The rate of convergence of the estimates.}

We consider the case where 
we obtain the quasi-likelihood estimates $(\hat{\mathcal{K}}, \hat{\theta})$
as in (\ref{k-hat}),
using the observations $\{(y_t , x_{tT} )\}_{t=1}^{T}$
generated by model (\ref{eq:dgp-K})
with collections of true parameter values $(\mathcal{K}^{0}, \theta^{0})$.
The results presented in this subsection can apply for 
the estimates obtained from the model under the null hypothesis
since it is a special case of the setup adopted.
To obtain the asymptotic results, 
the following assumptions are imposed.

\vspace{0.1cm}
\noindent 
\textbf{Assumptions:}
\begin{description}
  \item[A1.] 
    There exists a constant $k_{0}>0\ $such that for all $k>k_{0}$, 
    the minimum eigenvalues of the matrices 
    $k^{-1}\sum_{t=s}^{s+k}x_{tT}x_{tT}^{\prime }$ 
    are bounded away from zero 
    for every $s = 1,\dots, T-k$.

  \item[A2.] 
    Define the sigma-algebra 
    $\mathcal{F}_{t}:=\sigma(\{z_{s}, u_{ws}, \eta_{s}\}_{s\le t})$
    for $t \in \mathbb{Z}$,
    where
    $\eta_{s}:= (\Sigma_{s, \mathcal{K}^{0}}^{0})^{-1/2} u_{s}$.
    (a) 
    Define $\zeta_{t}:= (z_{t}', u_{wt}')'$
    and 
    let $z_{t}$ include a constant term.
    The sequence 
    $\left\{ 
       \zeta_{t} \otimes \eta_{t},\mathcal{F}_{t}\right\}_{t \in \mathbb{Z}}$ 
    forms a strongly mixing ($\alpha $-mixing)
    sequence with size $-(4+\delta)/\delta$ for some $\delta \in (0,1/2)$
    and 
    satisfies 
    $E[z_{t}\otimes \eta_{t}]=0$
    and 
    $\sup_{t\in \mathbb{Z}}\left\Vert \zeta _{t}\otimes \eta_{t} \right\Vert _{4+\delta}<\infty$. 
    (b)
    It is also assumed that $\{\eta_{t}\eta_{t}^{\prime }-I_{n}\}_{t\in \mathbb{Z}}$ 
    satisfies the same mixing and moment conditions 
    as in (a).
    (c)
    The sequence 
    $\{w_{0} \otimes \eta_{t}\}_{t \in \mathbb{Z}}$
    forms a strong mixing sequence as in (a)
    with $\sup_{t \in \mathbb{Z}}\|w_{0} \otimes \eta_{t}\|_{4+\delta}<\infty$
    and 
    the initial condition $w_{0}$ is $\mathcal{F}_{0}$-measurable.

  \item[A3.] 
    The collection of the true break dates 
    $\mathcal{K}^{0}$ is included in $\Xi_{\nu}$
    and 
    satisfies 
    $k_{gj}^{0}=\left[ T\lambda _{gj}^{0}\right] $
    for every 
    $(g, j) \in \{1, \dots, G\} {\times} \{1,...,m\}$, 
    where  
    $0<\lambda _{g1}^{0}<\dots<\lambda _{gm}^{0}<1$.

  \item[A4.] 
    For every parameter group $g$ and regime $j$,
    there exists 
    a $p \times 1$ vector $\delta_{gj}$ 
    and 
    an $n \times n$ matrix $\Phi_{j}$
    such that 
    $\beta _{g, j+1}^{0}-\beta _{g j}^{0}=v_{T}\delta _{gj}$ and 
    $\Sigma _{j+1}^{0}-\Sigma _{j}^{0}=v_{T}\Phi _{j}$,
    where 
    both 
    $\delta_{gj}$ and $\Phi _{j}$ are independent of $T$,
    and 
    $v_{T}>0\ $is a scalar satisfying 
    $v_{T}\rightarrow 0\ $and 
    $\sqrt{T}v_{T}/\log T \rightarrow \infty \ $as 
    $T\rightarrow \infty $.
    Let $\delta_{j}:=\sum_{g=1}^{G} \delta_{gj}$ 
    for $j=1,\dots, m+1$.

  \item[A5.]
    The true basic parameters 
    $(\beta^{0}, \Sigma^{0})$
    belong to 
    the compact parameter space 
    \begin{eqnarray*}
      \Theta
      :=
      \Big \{
      \theta:
      \max_{1 \le j \le m +1}
      \|\beta_{j}\| \le c_{1},  \
      c_{2}
      \le 
      \min_{1 \le j \le m+1}
      \lambda_{\min}(\Sigma_{j}), \
      \max_{1 \le j \le m+1}
      \lambda_{\max}(\Sigma_{j})
      \le c_{3}
      \Big \},      
    \end{eqnarray*}
    for some constants $c_{1}< \infty$,
    $0 < c_{2} \le c_{3} < \infty$,
    where 
    $\lambda_{\min}(\cdot)$
    and 
    $\lambda_{\max}(\cdot)$
    denote
    the smallest and largest eigenvalues 
    of the matrix in its argument, respectively.
\end{description}

Assumption A1 ensures that there is no local collinearity problem so that 
a standard invertibility requirement holds 
if the number of observations in some sub-sample 
is greater than $k_{0}$, not depending on $T$.
Assumption A2 determines the dependence structure of 
$\{\zeta_{t} \otimes \eta_{t}\}$,
$\{\eta_{t}\eta_{t}' - I_{n}\}$
and 
$\{w_{0} \otimes \eta_{t}\}$
to guarantee that they are short memory processes 
and have bounded fourth moments. The assumptions are
imposed to obtain a functional central limit theorem and a generalized 
\cite{HajekRenyi1955AMH} type inequality that 
allow us to derive the relevant convergence rates. 
Assumption A2 also specifies that the stationary regressors are contemporaneously
uncorrelated with the errors
and 
that  
a constant term is included in $z_{t}$. 
The former is a standard requirement to obtain consistent estimates
and 
the latter is for notational simplicity 
since the results reported below are the same without a constant term.\footnote{
  One can use the usual ordinary least squares framework
  to simply estimate the break dates and test for structural change
  even in the presence of the correlation between 
  the stationary regressors and the errors
  \citep[see][]{PerronYamamoto2015JAE}.
  One may also use a two-stage least squares method 
  if relevant instrumental variables are available 
  \citep[see][]{HallHanBoldea2012JoE, PerronYamamoto2014}. 
}\footnote{
  When a constant term is not included in $z_{t}$, in contrast to Assumption A2, 
  one additionally needs to assume that 
  the sequence $\{\eta_{t}\}_{t \in \mathbb{Z}}$ satisfies 
  the same mixing and moment conditions as in Assumption A2(a).
}
It is important to note that no assumption is
imposed on the correlation between the innovations to the $I(1)$ regressors
and the errors. Hence, we allow endogenous $I(1)$ regressors. 
Assumption A3 ensures that 
$\lambda_{gj}^{0} -\lambda_{g,j-1}^{0}  > \nu$ holds 
for every pair of group and regime  $(g, j)$
and 
thus implies asymptotically distinct breaks within each parameter group,
but not necessarily across groups.
Assumption A4 implies a shrinking shifts asymptotic framework whereby the magnitudes of the shifts converge to zero as the sample size increases. 
This condition is necessary to develop a limit distribution theory 
for the estimates of the break dates that does not depend on the exact distributions 
of the regressors and the errors,
as commonly used in the literature 
\citep[e.g.,][]{Bai1997REStat, BaiPerron1998Emtca, BaiLumsdaineStock1998}.
Assumption A5 implies that 
the data are generated by a model with a finite conditional mean
and innovations having a non-degenerate covariance matrix.

As stated above, the break dates are estimated from a set $\Xi_{\nu}$,
which requires candidate break dates to be separated by some
fraction of the sample size only within parameter groups.
Thus, we cannot appeal to the results in 
\cite{Bai2000AEF} and Qu and Perron (2007) about the rate of convergence of the
estimates, and more general results are needed. 
The following theorem presents results about the convergence rates of 
the estimates.

\begin{theorem}
  \label{theorem:rate}
  Suppose that Assumptions A1-A5 hold. Then, \\
  (a) 
  uniformly in $(g, j) \in \{1,...,G\} \times \{1,...,m \}$,
  \begin{eqnarray*}
    v_{T}^{2}
    (
    \hat{k}_{gj}-k_{gj}^{0}
    )
    = 
    O_p(1),
  \end{eqnarray*}
  (b)
  uniformly in $(g, j) \in \{1,...,G\} {\times} \{1,...,m +1\}$,
  \begin{eqnarray*}
    \sqrt{T}(\hat{\beta}_{gj}-\beta _{gj}^{0})=O_{p}(1)  
    \ \ \mathrm{and}  \ \ 
    \sqrt{T}(\hat{\Sigma}_{j}-\Sigma_{j}^{0})=O_{p}(1).
  \end{eqnarray*}
\end{theorem}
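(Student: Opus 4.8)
The plan is to work throughout with the normalized criterion $\ell_T(\mathcal{K},\theta)=\log L_T(\mathcal{K},\theta)-\log L_T(\mathcal{K}^0,\theta^0)$, which is non-negative at the maximizer because $(\mathcal{K}^0,\theta^0)$ is feasible: Assumption A3 places $\mathcal{K}^0$ in $\Xi_\nu$ and the true parameters satisfy $R(\theta^0)=0$ by construction, so $\ell_T(\hat{\mathcal{K}},\hat\theta)\ge\ell_T(\mathcal{K}^0,\theta^0)=0$. Substituting $y_t=X_{tT}'\beta_{t,\mathcal{K}^0}^0+u_t$ into the Gaussian quasi-likelihood and expanding, $2\ell_T$ separates into a deterministic component driven by the discrepancies $\beta_{t,\mathcal{K}}-\beta_{t,\mathcal{K}^0}^0$ and $\Sigma_{t,\mathcal{K}}-\Sigma_{t,\mathcal{K}^0}^0$, and a stochastic component that is linear and quadratic in the errors (equivalently in $\eta_t=(\Sigma_{t,\mathcal{K}^0}^0)^{-1/2}u_t$). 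First I would establish consistency of the break fractions, $\hat{k}_{gj}/T-\lambda_{gj}^0=o_p(1)$ for every $(g,j)$, and only then sharpen the bound to the stated rate.

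For consistency, the deterministic component supplies a strictly negative signal whenever a fraction is displaced by a positive amount, while the stochastic component is uniformly small. Assumption A1 is essential here: it guarantees that over any segment longer than $k_0$ observations the relevant sample second-moment matrices are invertible with eigenvalues bounded below, so that moving a break cannot be absorbed costlessly even in the presence of the integrated regressors $T^{-1/2}w_t$, whose endogeneity is left unrestricted. Assumption A5 keeps $\Sigma_{t,\mathcal{K}}$ uniformly positive definite and bounded, so the quadratic forms are well behaved. The stochastic component is controlled by a functional central limit theorem for the partial sums of $\zeta_t\otimes\eta_t$ and $\eta_t\eta_t'-I_n$, together with a generalized \cite{HajekRenyi1955AMH} type maximal inequality, both delivered by the mixing and moment conditions of A2; these bound $\max_k\|\sum_{t\le k}X_{tT}u_t\|$ and the analogous covariance partial sums uniformly in the segment length.

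Next I would localize. On the event that all fractions lie in a small neighborhood of the truth, displacing a single break $k_{gj}$ by $\tau:=|\hat{k}_{gj}-k_{gj}^0|$ observations lowers $2\ell_T$ by a deterministic amount of exact order $v_T^2\tau$ (the squared shift $\|v_T\delta_{gj}\|^2$ accumulated over $\tau$ misclassified periods, bounded below using A1 and A4) while raising it by a stochastic amount of order $v_T\sqrt{\tau}$ up to a $\log T$ factor, again through the same maximal inequality. Since $\ell_T(\hat{\mathcal{K}},\hat\theta)\ge0$, the signal cannot exceed the noise, which forces $v_T^2\tau=O_p(1)$ uniformly in $(g,j)$; the condition $\sqrt{T}v_T/\log T\to\infty$ in A4 is precisely what absorbs the logarithmic factor and delivers the clean rate in part (a). The main obstacle, and the departure from \cite{Bai2000AEF} and \cite{QuPerron2007Emtca}, is that $k_{gj}$ may sit within $o_p(T)$ of a break of another group, so one cannot isolate a regime with $O_p(T)$ clean observations on each side. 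The decomposition in (\ref{eq:beta}) resolves this: the signal attached to moving $k_{gj}$ depends only on $\delta_{gj}$ and the regressors loading on group $g$, so it can be extracted intact, while the uniform maximal inequality continues to dominate the noise even on the short segments created by nearby breaks of other groups. This is exactly the feature of searching over $\Xi_\nu$, rather than over configurations with all breaks asymptotically distinct, that must be confronted.

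Finally, part (b) follows from part (a). Since $\sqrt{T}v_T/\log T\to\infty$ yields $Tv_T^2\to\infty$, the bound $v_T^2(\hat{k}_{gj}-k_{gj}^0)=O_p(1)$ implies that the number of misclassified observations in each regime is $O_p(v_T^{-2})=o_p(T)$. Each such period contributes a bias of order $v_T$, so $\hat{\beta}_{gj}$ and $\hat{\Sigma}_j$ differ from the infeasible quasi-maximum likelihood estimators computed on the true regimes by a term of order $O_p(v_T^{-2}\cdot v_T/T)=O_p((v_T\sqrt{T})^{-1}\cdot T^{-1/2})=o_p(T^{-1/2})$. The infeasible estimators are $\sqrt{T}$-consistent by the functional central limit theorem of A2 and the invertibility of A1, so $\sqrt{T}(\hat{\beta}_{gj}-\beta_{gj}^0)=O_p(1)$ and $\sqrt{T}(\hat{\Sigma}_j-\Sigma_j^0)=O_p(1)$, uniformly in $(g,j)$. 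I expect the rate-sharpening step under non-distinct breaks to be the crux; the consistency and parameter-rate steps are adaptations of standard arguments once the uniform maximal inequalities are in place.
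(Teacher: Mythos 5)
Your proposal follows essentially the same architecture as the paper's proof: non-negativity of the normalized likelihood at the maximizer, a decomposition of $\ell_T$ into group-specific signal terms plus noise (exploiting the orthogonality of the $\beta_{gj}$ across disjoint index sets $\mathcal{G}_g$, which is exactly how the paper's Lemma A.3 handles breaks that are not asymptotically distinct across groups), generalized Hajek--Renyi maximal inequalities to control the noise uniformly over segment lengths, and a preliminary consistency step (the paper's Propositions A.1--A.2, which also secure the $o_p(v_T)$ parameter consistency your signal calculation implicitly needs) followed by rate sharpening. The only genuine divergence is part (b), where you argue asymptotic equivalence to the infeasible estimators on the true regimes while the paper instead derives a contradiction with $\ell_T(\hat{\mathcal{K}},\hat{\theta})\ge 0$ from Property 3(a) when $\sqrt{T}\|\hat{\beta}_{gj}-\beta_{gj}^0\|$ is large; both routes are standard and valid.
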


This theorem establishes the convergence rates 
obtained in 
\cite{BaiPerron1998Emtca},  
\cite{BaiLumsdaineStock1998},
\cite{Bai2000AEF}
and
\cite{QuPerron2007Emtca},
while assuming less restrictive conditions
regarding the optimization problem and
the time-series properties of the regressors.

The importance of these results is that they will allow us to analyze the
properties of our test under compact sets for the parameters, namely,
for some $M>0$,
\begin{eqnarray*}
  \bar{\Xi}_{M} 
  &:=&
  \big \{
  \mathcal{K} \in \Xi_{\nu}
  :
  \max_{1 \le g \le G}
  \max_{1 \le j \le m}
  |k_{gj}-k_{gj}^{0}|\leq
  Mv_{T}^{-2}
  \big \} \\
  \bar{\Theta}_{M}
  &:=&
  \big \{
  \theta \in \Theta:
  \max_{1 \le g \le G}
  \max_{1 \le j \le m+1} 
  \|\beta_{gj}-\beta_{gj}^{0} \|
  \le
  M T^{-1/2},
  \max_{1 \le j \le m+1}
  \|\Sigma _{j}-\Sigma _{j}^{0}\|\leq M T^{-1/2}
  \big \}.
\end{eqnarray*}%

We also have a result that expresses the restricted likelihood in two parts:
one that involves only the break dates and the true values of the coefficients;
the other involving 
the true values of the break dates, 
the basic parameters
and 
the restrictions.
Thus, asymptotically the estimates of the break dates are not affected by the restrictions 
imposed on the coefficients, while the limiting distributions of these estimates are
influenced by the restrictions. 

\begin{theorem}
  \label{theorem:parts}
  Suppose that Assumptions A1-A5 hold. Then,
  \begin{eqnarray}
    \label{eq:qqq}
    \sup_{(\mathcal{K}, \theta) \in \bar\Xi_{M} \times    \bar{\Theta}_{M}}
    \ell_{T, R}(\mathcal{K},\theta)
    = 
    \sup_{\mathcal{K} \in \bar\Xi_{M} }
    \ell_{T}(\mathcal{K},\theta^{0})
    +
    \sup_{ \theta \in  \bar{\Theta}_{M}}
    \ell_{T, R}(\mathcal{K}^{0},\theta)
    +
    o_{p}(1),
  \end{eqnarray}
  where 
  $
    \ell_{T, R}(\mathcal{K},\theta)
    :=
    \ell_{T}(\mathcal{K},\theta)
    + 
    \gamma' R(\theta)
  $
  with
  a Lagrange multiplier
  $\gamma$.
\end{theorem}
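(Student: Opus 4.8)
The plan is to prove the stronger pointwise statement that the ``second difference'' of the restricted log-likelihood vanishes uniformly on the shrinking neighborhoods, namely that
\[
D_{T}(\mathcal{K},\theta) := \ell_{T,R}(\mathcal{K},\theta) - \ell_{T,R}(\mathcal{K},\theta^{0}) - \ell_{T,R}(\mathcal{K}^{0},\theta) + \ell_{T,R}(\mathcal{K}^{0},\theta^{0}) = o_{p}(1)
\]
uniformly over $(\mathcal{K},\theta)\in\bar{\Xi}_{M}\times\bar{\Theta}_{M}$. Since the Lagrange term $\gamma' R(\theta)$ depends on $\theta$ only, it cancels in this alternating sum, so $D_{T}$ coincides with the analogous second difference of $\ell_{T}$, and hence of $\log L_{T}$. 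Rearranging and using the identity $\ell_{T,R}(\mathcal{K},\theta^{0})-\ell_{T,R}(\mathcal{K}^{0},\theta^{0})=\ell_{T}(\mathcal{K},\theta^{0})$ (again the multiplier term cancels, because $\ell_{T}(\mathcal{K}^{0},\theta^{0})=0$) yields $\ell_{T,R}(\mathcal{K},\theta)=\ell_{T}(\mathcal{K},\theta^{0})+\ell_{T,R}(\mathcal{K}^{0},\theta)+o_{p}(1)$ uniformly. Taking the supremum over the product set $\bar{\Xi}_{M}\times\bar{\Theta}_{M}$ then delivers (\ref{eq:qqq}), since the first summand depends on $\mathcal{K}$ only and the second on $\theta$ only, so the joint supremum splits into the two separate suprema.

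The key structural observation is that $\log L_{T}$ is additively separable across $t$ through the regime-specific density $f(y_{t}\,|\,X_{tT},\theta_{t,\mathcal{K}})$. For any observation whose regime label, in every coefficient group $g$ and in the covariance block $G$, is the same under $\mathcal{K}$ as under $\mathcal{K}^{0}$, the four log-density terms making up the summand of $D_{T}$ cancel identically. Hence $D_{T}$ is supported only on the set of ``misclassified'' observations lying in the symmetric differences of the true and candidate regime intervals; by the constraint defining $\bar{\Xi}_{M}$ each break moves by at most $M v_{T}^{-2}$, so this set has cardinality $O(v_{T}^{-2})$. On each such observation the summand equals $[\phi_{t}(\theta_{j})-\phi_{t}(\theta_{j}^{0})]-[\phi_{t}(\theta_{j'})-\phi_{t}(\theta_{j'}^{0})]$, where $\phi_{t}(\cdot):=\log f(y_{t}\,|\,X_{tT},\cdot)$ and $j,j'$ are the adjacent regimes assigned to $t$ under $\mathcal{K}$ and under $\mathcal{K}^{0}$.

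First I would Taylor-expand $\phi_{t}$ to second order in the parameter deviations, which are $O(T^{-1/2})$ on $\bar{\Theta}_{M}$, invoking Assumption A5 to keep $\Sigma$ bounded away from singularity so that the expansion and its remainder are uniformly controlled. The leading contribution rearranges into a term (difference of scores at the two adjacent true values)$\times$(deviation) plus a term score$\times$(difference of the two deviations). By Assumption A4 the adjacent true values $\theta_{j}^{0}$ and $\theta_{j'}^{0}$ differ by $O(v_{T})$, so the score difference is itself $O(v_{T})$ up to a mean-zero piece; the scores are mean-zero $\alpha$-mixing sequences by Assumption A2, so their partial sums over a window of length $O(v_{T}^{-2})$ are $O_{p}(v_{T}^{-1})$, including the integrated-regressor blocks, where the $T^{-1/2}$ normalization of $w_{t}$ and the functional central limit theorem keep the relevant increments $O_{p}(1)$. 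Multiplying the window sums by the $O(T^{-1/2})$ deviations and collecting terms, each piece is of order $(\sqrt{T}v_{T})^{-1}$ or $v_{T}^{-2}T^{-1}$, both $o(1)$ because $\sqrt{T}v_{T}\to\infty$.

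The main obstacle, and the step I would spend the most care on, is making all of these bounds hold uniformly and simultaneously over $\mathcal{K}\in\bar{\Xi}_{M}$ and $\theta\in\bar{\Theta}_{M}$. Uniformity over $\theta$ is handled by the compactness in Assumption A5 and the linearity of the leading terms in the deviation, so the relevant supremum is attained on the boundary of $\bar{\Theta}_{M}$. Uniformity over $\mathcal{K}$ requires a maximal inequality: there are only polynomially many candidate break configurations in $\bar{\Xi}_{M}$, and controlling the maximum of the window partial sums over all of them through the generalized H\'ajek--R\'enyi inequality afforded by Assumption A2 introduces a $\log T$ penalty. This is precisely why the rate condition $\sqrt{T}v_{T}/\log T\to\infty$ in Assumption A4 is needed: it guarantees $v_{T}^{-1}T^{-1/2}\sqrt{\log T}\to 0$, so that the uniform bound on $\sup|D_{T}|$ remains $o_{p}(1)$, which completes the argument.
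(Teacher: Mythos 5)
Your argument is correct and follows essentially the same route as the paper's proof: the restriction term cancels in the second difference, the cross term $\ell_{T}(\mathcal{K},\theta)-\ell_{T}(\mathcal{K},\theta^{0})-\ell_{T}(\mathcal{K}^{0},\theta)$ vanishes identically off the $O(v_{T}^{-2})$ misclassified observations, and on those windows the H\'ajek--R\'enyi-type maximal inequality combined with the $O(T^{-1/2})$ parameter deviations gives a uniform remainder of order $(\sqrt{T}v_{T})^{-1}=o_{p}(1)$ --- exactly the paper's appeal to its Property~\ref{property:smallin} with $b_{T}=MT^{-1/2}$. The only cosmetic difference is that you insert an explicit Taylor expansion and invoke the $\sqrt{T}v_{T}/\log T\to\infty$ condition to absorb a $\log T$ penalty that the paper's maximal inequality does not in fact incur, which is harmless.
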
  

The result in Theorem 2 implies that when analyzing the asymptotic properties of the break date estimates, one
can ignore the restrictions in (\ref{restrictions}). This will prove especially convenient to obtain
the limit distribution of our test.
Since the quasi-likelihood ratio test can be expressed as a difference 
of two normalized log likelihoods evaluated at different break dates, 
the second term on the right-hand side of (\ref{eq:qqq}) is canceled out
in the test statistic.
The result in Theorem 2 has been obtained in \cite{Bai2000AEF}
for vector autoregressive models
and
\cite{QuPerron2007Emtca}
for more general stationary regressors,
when break dates are assumed to either have a common location or be asymptotically distinct. 
We establish the results, allowing for the possibility that the break dates associated with different basic parameters may not be asymptotically distinct,
and thus expand the scope of prior work such as 
\cite{BaiLumsdaineStock1998}, \cite{Bai2000AEF}
and \cite{QuPerron2007Emtca}.

\subsection{The limit distribution of the likelihood ratio test}

We now establish the limit distribution of the quasi-likelihood ratio test
under the null hypothesis of common breaks in (\ref{eq:H0}).  
To this end, let the data consist of 
the observations $\{(y_t , x_{tT} )\}_{t=1}^{T}$
from model (\ref{eq:dgp-x})
with true basic parameters $\theta^{0}=(\beta^{0}, \Sigma^{0})$
and 
true break dates $\mathcal{T}^{0}$
consisting of $T_{1}^{0}, \dots, T_{m}^{0}$.
Theorem \ref{theorem:rate}(a) shows that,
uniformly in
$(g, j) \in 
\{1, \dots, G\}
\times  
\{1, \dots, m \}
$,
there exists a sufficiently large $M$ such that 
$|\hat{k}_{gj} - T_{j}^{0}| \le M v_{T}^{-2}$
and 
$|\tilde{k}_{j} - T_{j}^{0}| \le M v_{T}^{-2}$
with probability approaching 1.
This implies that we can restrict our analysis to an interval 
centered at the true break $T_{j}^{0}$ with 
length $2 M v_{T}^{-2}$
for each regime $j \in \{1,\dots ,m\}$.
More precisely, 
given a sufficiently large $M$, 
we have that 
$\theta_{t, \hat{\mathcal{K}}}^{0}
 =
 \theta_{t, \mathcal{T}^{0}}^{0}
$
and
$\theta_{t, \tilde{\mathcal{K}}}^{0}
 =
 \theta_{t, \mathcal{T}^{0}}^{0}
$
for all $t \not \in \cup_{j=1}^{m}[T_{j}^{0}-Mv_{T}^{-2}, T_{j}^{0}+Mv_{T}^{-2}]$,
with probability approaching 1.
This follows since the break dates estimates are asymptotically in neighborhoods
of the true break dates;  
hence that there are some miss-classification of regimes around the neighborhoods,
while the regimes are correctly classified outside of the neighborhoods. 
This together with Theorem \ref{theorem:parts} yields that,
under the null hypothesis specified by (\ref{eq:H0}),
\begin{eqnarray*} 
  CB_{T}&=& 
  2 \max_{\mathcal{K} \in \bar\Xi_{M}} 
  \sum_{j=1}^{m}\sum_{\underline{k}_{j}+1}^{\bar{k}_{j}}
  \left\{ 
    \log f(y_{t}|X_{tT},\theta_{t,\mathcal{K}}^{0})
    -
    \log f(y_{t}|X_{tT},\theta_{t,\mathcal{T}^{0}}^{0})
  \right\} \\
  &&- 
  2 \max_{\mathcal{K} \in \bar\Xi_{M, H_{0}}} 
  \sum_{j=1}^{m}\sum_{\underline{k}%
    _{j}+1}^{\overline{k}_{j}}
  \left\{ 
    \log f(y_{t}|X_{tT},\theta_{t,\mathcal{K}}^{0})
    -
    \log f(y_{t}|X_{tT},\theta_{t,\mathcal{T}^{0}}^{0})
  \right\} 
  + o_{p}(1),
\end{eqnarray*} 
where 
$\overline{k}_{j}:=\max \{k_{1j},\dots, k_{Gj}, T_{j}^{0}\}$,
$\underline{k}_{j}:=\min \{k_{1j},\dots, k_{Gj}, T_{j}^{0}\}$,
and 
$\Xi_{M, H_{0}} = \Xi_{M} \cap \Xi_{\eta, H_{0}}$.
Under the null hypothesis, the true break dates $T_{1}^{0}, \dots, T_{m}^{0}$  
are separated by some positive fraction of the sample size
and 
we can obtain the limit distribution of the common break test 
by separately analysing terms of the test for each neighborhood of the true break date. 
We consider a shrinking framework under which the break date estimates 
$\hat{k}_{gj}$ and $\tilde{k}_{j}$ diverge to $\infty$ as $v_{T}$ decreases
and thus 
an application of a Functional Central Limit Theorem
for each neighborhood yields a limit distribution of the test 
which does not depend on the exact distributions. 
To derive the limit distribution, we make the following additional assumptions.

\vspace{0.1cm}
\noindent 
\textbf{Assumptions:}
\begin{description}
  \item[A6.]  
    The matrix 
    $(\Delta T_{j}^{0})^{-1} \sum_{t = T_{j-1}^{0} + 1}^{T_{j}^{0}}x_{tT}x_{tT}$
    converges to a (possibly) random matrix not necessarily the same 
    for all $j = 1, \dots, m+1$,
    as $\Delta T_{j}^{0}:=(T_{j}^{0} - T_{j-1}^{0})\rightarrow \infty $.
    Also, 
    $
      ( \Delta T_{j}^{0} )^{-1}
      \sum_{t=T_{j-1}^{0}+1}^{T_{j-1}^{0}+[s\Delta T_{j}^{0}]}
      z_{t}
      \overset{p}{\rightarrow}
      s\mu_{z,j} 
    $
    and 
    $
      ( \Delta T_{j}^{0} )^{-1}
      \sum_{t=T_{j-1}^{0}+1}^{T_{j-1}^{0}+[s\Delta T_{j}^{0}]}
      z_{t}z_{t}^{\prime}
      \overset{p}{\rightarrow}
      sQ_{zz,j} 
    $
    uniformly in $s\in [0,1]$
    as $\Delta T_{j}^{0}\rightarrow \infty$,
    where 
    $Q_{zz,j}$ 
    is a non-random positive definite matrix.
 
  \item[A7.] 
    Define 
    $S_{k,j}(l):=\sum_{T_{j-1}^{0}+l +1}^{T_{j-1}^{0}+l +k}(\zeta _{t}\otimes \eta_{t})$ 
    for $k, l \in \mathbb{N}$
    and 
    for $j=1,...,m+1$.
    (i) If 
    $\{\zeta _{t} \otimes \eta_{t}\}_{t \in \mathbb{Z}}$ is weakly stationary within each segment, 
    then, 
    for any vector $e \in \mathbb{R}^{(q_{z}+q_{w})n}$ with $\|e\|=1$,
    $\mathrm{var}\big(  e'S_{k,j}(0) \big) \geq v(k)$ 
    for some function $v(k) \rightarrow \infty$ as 
    $k\rightarrow \infty$. 
    (ii) If $\{\zeta _{t} \otimes \eta_{t}\}_{t\in \mathbb{Z}}$ 
    is not weakly stationary within each
    segment, we additionally assume that there is a
    positive definite matrix $\Omega =\left[ w_{i,s}\right] $ such that for any 
    $i,s \in \{1,...,p\}$, we have, uniformly in $\ell ,$ 
    $\big |
      k^{-1}
      E
      \big [
       \big (S_{k,j}(\ell ) \big ) _{i}
       \big ( S_{k,j}(\ell) \big ) _{s}
      \big] 
      -
      w_{i,s}
    \big | 
    \leq k^{-\psi }
    $, 
    for some $C>0$
    and 
    for some $\psi >0$.
    We also assume the same conditions for $\{\eta_{t}\eta_{t}^{\prime }-I_{n}\}_{t \in \mathbb{Z}}$. 

    \item[A8.] 
      Let 
      $V_{T, w}(r):=T^{-1/2}\sum\nolimits_{t=1}^{\left[ Tr\right]} u_{wt}$
      for 
      $r\in [0,1]$.
      $
      V_{T, w}(\cdot)
      \Rightarrow 
      \mathbb{V}_{w}(\cdot)$,
      where 
      $\mathbb{V}_{w}(\cdot)$
      is 
      a Wiener processes
      having a covariance function 
      $\mathrm{cov}(
       \mathbb{V}_{w}(r),
       \mathbb{V}_{w}(s)
       )
       =
       (r \wedge s)
       \Omega _{w}
      $
      for $r, s \in [0,1]$
      with 
      a positive definite matrix 
      $\Omega _{w}:= \lim_{T \to \infty} 
       \mathrm{var}
       \big (  
       T^{-1/2}\sum\nolimits_{t=1}^{T} u_{wt}
       \big)
      $.

  \item [A9.] 
    For all $1 \le s, t \le T$, 
    (a) $E[(z_{t} \otimes \eta_{t}) w_{s}^{\prime }]=0$, 
    (b) $E[(z_{t} \otimes \eta_{t}) \mathrm{vec}(\eta _{s}\eta_{s}')']=0$, 
    and 
    (c) $E[(u_{zt} \otimes \eta_{t}) \mathrm{vec}(\eta _{s}\eta_{s}')']=0$.
\end{description}
\vspace{0.1cm}

Assumption A6 rules out trending variables in the stationary regressors $z_{t}$.
Assumption A7 is mild in the sense that the conditions allow for
substantial conditional heteroskedasticity and autocorrelation.
It can be shown to apply to a
large class of linear processes including those generated by all stationary
and invertible ARMA models.
This assumption is useful to describe the asymptotic behavior of the test
and 
in particular to characterize the limit distribution. 
Here, we introduce some processes used later.
For each $j=1, \dots, m$, 
let 
$\mathbb{V}_{z \eta ,j}^{(1)}(\cdot)$
and 
$\mathbb{V}_{z \eta ,j}^{(2)}(\cdot)$
be Brownian motions defined on the space $D[0,\infty)^{nq}$
with 
zero means
and 
covariance functions given by, 
for $l = 1, 2$
and 
for $s_{1},s_{2}>0$,
\begin{eqnarray*}
  E
  \big [
  \mathbb{V}_{z \eta ,j}^{(l)} (s_{1}) 
  \mathbb{V}_{z \eta ,j}^{(l)} (s_{2})' 
  \big ]
  = (s_{1}\wedge s_{2})
  \lim_{T \to \infty}
  \mathrm{var}
  \big (
  \bar{V}_{T, z\eta,j}^{(l)}
  \big ),
\end{eqnarray*}
where 
$
  \bar{V}_{T, z\eta,j}^{(1)}
  :=
  (\Delta T_{j}^{0})^{-1/2}
  \sum_{t=T_{j-1}^{0}+1}^{T_{j}^{0} }
  (z_{t}\otimes \eta_{t})
$
and 
$ 
  \bar{V}_{T, z\eta,j}^{(2)}
  :=
  (\Delta T_{j+1}^{0})^{-1/2}
  \sum_{t=T_{j}^{0}+1}^{T_{j+1}^{0}}
  (z_{t}\otimes \eta_{t})
$.
Similarly, define 
$\mathbb{V}_{\eta\eta ,j}^{(1)}(\cdot)$
and 
$\mathbb{V}_{\eta\eta ,j}^{(2)}(\cdot)$
as Brownian motions defined on the space $D[0,\infty)^{n^2}$
with 
zero means
and 
covariance functions given by, 
for $l = 1, 2$
and for $s_{1}, s_{2} >0$,
\begin{eqnarray*}
  E
  \big [
  \mathrm{vec}
  \big (
  \mathbb{V}_{\eta \eta ,j}^{(l)} (s_{1}) 
  \big )
  \mathrm{vec}
  \big (
  \mathbb{V}_{\eta \eta ,j}^{(l)} (s_{2})
  \big )' 
  \big ] 
  = 
  (s_{1}\wedge s_{2}) 
  \lim_{T \to \infty}
  \mathrm{var}
  \big \{
  \mathrm{vec} 
  \big (
  \bar{V}_{T, \eta\eta,j}^{(l)}
  \big )
  \big \},
\end{eqnarray*}
where 
$
  \bar{V}_{T, \eta\eta,j}^{(1)}
  :=
  (\Delta T_{j}^{0})^{-1/2}
  \sum_{t=T_{j-1}^{0}+1}^{T_{j}^{0}}
  (\eta_{t}\eta_{t}' {-} I_{n})
$
and
$
  \bar{V}_{T, \eta\eta,j}^{(2)}
  :=
  (\Delta T_{j+1}^{0})^{-1/2}
  \sum_{t=T_{j}^{0}+1}^{T_{j+1}^{0}}
  (\eta_{t}\eta_{t}' {-} I_{n}) 
$.
We define the following two-sided Brownian motions
\begin{eqnarray*}
  \mathbb{V}_{z\eta ,j}(s)  
  :=
  \left \{
  \begin{array}{ll}
    \mathbb{V}_{z\eta,j}^{(1)}(-s), & s \le 0 \\     
    \mathbb{V}_{z\eta ,j}^{(2)}(s), & s>0 
  \end{array}
  \right.
  \ \ \ \mathrm{and} \ \ \
  \mathbb{V}_{\eta\eta ,j}(s)  
  :=
  \left \{
  \begin{array}{ll}
    \mathbb{V}_{\eta\eta,j}^{(1)}(-s), & s \le 0 \\     
    \mathbb{V}_{\eta \eta ,j}^{(2)}(s), & s>0 .
  \end{array}
  \right .
\end{eqnarray*}
Under Assumption A2, $z_{t}$ is assumed to include a constant term 
and the process 
$\mathbb{V}_{z\eta ,j}^{(l)} (\cdot)$ 
includes some process depending purely on $\{\eta_{t}\}$.
We denote it by $\mathbb{V}_{\eta ,j}^{(l)} (\cdot)$ for each $l=1,2$ 
and also define 
a two-sided Brownian motion, denoted by $\mathbb{V}_{\eta,j}(\cdot)$,
as before.

Assumption A8 requires the integrated regressors to follow 
a homogeneous distribution throughout the sample.  
Allowing for heterogeneity in the distribution of the errors underlying the 
$I(1)$ regressors would be considerably more difficult, since 
we would, instead of having the limit distribution in terms of standard Wiener processes,
have time-deformed Wiener processes according to the variance profile of the
errors through time; see, e.g., \cite{CT2007}.
This would lead
to important complications given that, as shown below, the limit
distribution of the estimates of the break dates depends on the whole time
profile of the limit Wiener processes. 
It is possible to allow for trends in the $I(1)$ regressors. 
The limiting distributions of the test to be
derived will remain valid under different Wiener processes 
\citep[see][]{Hansen1992JBES}.
The positive definiteness of the matrix 
$\Omega _{w}$
rules out cointegration among the $I(1)$ regressors and
is needed to ensure a set of regressors that has a positive definite limit.

Assumption A9 is quiet mild and is sufficient but not necessary
to obtain a manageable limit distribution of the test.
It requires the independence of most Wiener processes described above. 
Condition (a) ensures that 
the autocovariance structure of the $I(0)$ regressors and the errors
are uncorrelated with the $I(1)$ variables.
This guarantees that 
$\mathbb{V}_{z\eta,j}(\cdot)$ and $\mathbb{V}_{w,j}(\cdot )$
are uncorrelated and thus independent because of Gaussianity.
Without these conditions, the analysis would be much more complex. 
Similarly,
the conditions (b) and (c) imply the independence 
between
$\mathbb{V}_{z\eta,j}(\cdot )$ and $\mathbb{V}_{\eta \eta}(\cdot )$.
See \cite{KejriwalPerron2008JoE} for more details.

In order to characterize the limit distribution of $%
CB_{T}$ it is useful to first state some preliminary results about the limit
distribution of some quantities.
For $s \in \mathbb{R}$ and for $j = 1, \dots, m$,
let $\overline{T}_{j}(s):=\max \{T_{j}(s),T_{j}^{0}\}$ and 
$\underline{T}_{j}(s):=\min\{T_{j}(s),T_{j}^{0}\}$
where 
$T_{j}(s):= T_{j}^{0} + [s v_{T}^{-2}]$.
For $s, r \in \mathbb{R}$,
we define 
$
  B_{T,j}(s, r) 
  {:=} 
  v_{T}^{2}
  \sum_{t= \underline{T}_{j}^{0}(s)+ 1}^{  \overline{T}_{j}(s)}
  X_{tT}
  (\Sigma_{j + \mathbbm{1}_{ \{T_{j}(r) < t\} }}^{0} )^{-1} 
  X_{tT}'
$
and
$
  W_{T,j}(s, r)
  {:=} 
  v_{T}
  \sum_{t= \underline{T}_{j}^{0}(s)+ 1}^{  \overline{T}_{j}(s)}
  X_{tT}
  (\Sigma_{j + \mathbbm{1}_{ \{T_{j}(r) < t\} }}^{0} )^{-1} 
  u_{t}
$
for $j \in \{1, \dots, m\}$.

\begin{lemma}
  Suppose that Assumptions A1-A9 hold. Then,  
  \begin{equation*}
    \big \{
    B_{T,j}(\cdot, \cdot),
    W_{T,j}(\cdot, \cdot)
    \big \}_{j=1}^{m}
    \Rightarrow
    \big \{
    \mathbb{B}_{j}(\cdot, \cdot),
    \mathbb{W}_{j}(\cdot, \cdot)
    \big \}_{j=1}^{m},
  \end{equation*}
  where 
  \begin{eqnarray*}
    \mathbb{B}_{j}(s,r)
    :=
    |s|
    S' 
    \mathbb{D}_{j}(s)   
    \otimes 
    (\Sigma_{j+\mathbbm{1}_{ \{r \le s \}}}^{0})^{-1} 
    S
    -
    \mathbbm{1}_{ \{ |r| \le |s|\} }
    |r|
    S'
    \mathbb{D}_{j}(s)
    \otimes
    \{
    (\Sigma_{j+1}^{0})^{-1}
    -
    (\Sigma_{j}^{0})^{-1}
    \}
    S, 
  \end{eqnarray*}
  and 
  \begin{eqnarray*}
    \mathbb{W}_{j}(s,r)
    :=
    S'
    \big (
    I_{q} \otimes (\Sigma_{j+\mathbbm{1}_{ \{r \le s \}}}^{0})^{-1}
    \big )
    \mathbb{V}_{j}(s)   
     -
    \mathrm{sgn}(r)
    \mathbbm{1}_{ \{ |r| \le |s|\} }
    S'
    \big [
    I_{q} \otimes
     \{
     (\Sigma_{j+1}^{0})^{-1}
     -
     (\Sigma_{j}^{0})^{-1}
     \}
    \big ]
    \mathbb{V}_{j}(r),
  \end{eqnarray*}
  with 
  $
    \mathbb{V}_{j}(s):=
    \big (
    I_{q} \otimes (\Sigma_{j+\mathbbm{1}_{ \{0 \le s \}}}^{0})^{1/2}
    \big )
    \big [ 
      \mathbb{V}_{z\eta,j}(s)',
      \varphi(\lambda _{j}^{0})' \otimes \mathbb{V}_{\eta ,j}(s)',
      \mathbb{V}_{w}(\lambda _{j}^{0})' \otimes \mathbb{V}_{\eta ,j}(s)'
    \big]'
  $
  and 
  \begin{equation*}
    \mathbb{D}_{j}(s)
    :=
    \left( 
      \begin{array}{ccc}
        Q_{zz,j+\mathbbm{1}_{\{0<s\}}} &  
        \mu_{z,j+\mathbbm{1}_{\{0<s\}}} \varphi(\lambda_{j}^{0})' &  
        \mu _{z,j+\mathbbm{1}_{\{0<s\}}}\mathbb{V}_{w}(\lambda_{j}^{0} )^{\prime }\\ 
        \varphi(\lambda_{j}^{0})\mu _{z,j+\mathbbm{1}_{\{0<s\}}}^{\prime}
        & 
        \varphi(\lambda_{j}^{0}) \varphi(\lambda_{j}^{0})'
        & 
        \varphi(\lambda_{j}^{0}) \mathbb{V}_{w}(\lambda_{j}^{0})^{\prime } 
        \\
        \mathbb{V}_{w}(\lambda_{j}^{0}) \mu _{z,j+\mathbbm{1}_{\{0<s\}}}' 
        & 
        \mathbb{V}_{w}(\lambda_{j}^{0} )
        \varphi(\lambda_{j}^{0})'
        &
        \mathbb{V}_{w}(\lambda_{j}^{0} )\mathbb{V}_{w}(\lambda_{j}^{0})^{\prime }
        \\ 
      \end{array}%
    \right).
  \end{equation*}
\end{lemma}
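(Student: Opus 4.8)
The plan is to work in the local reparametrization $t=T_{j}^{0}+[s v_{T}^{-2}]$, under which both $B_{T,j}(s,r)$ and $W_{T,j}(s,r)$ are sums over a window $(\underline{T}_{j}(s),\overline{T}_{j}(s)]$ of width of order $|s|v_{T}^{-2}$ centered at the true break $T_{j}^{0}$, and to treat the three blocks of $X_{tT}=S'(x_{tT}\otimes I_{n})$ separately. Since $\sqrt{T}v_{T}/\log T\to\infty$ forces $v_{T}^{-2}=o(T)$, every such window has length $o(T)$, so over it the trend block satisfies $\varphi(t/T)\to\varphi(\lambda_{j}^{0})$ uniformly and the rescaled integrated block satisfies $T^{-1/2}w_{t}\to\mathbb{V}_{w}(\lambda_{j}^{0})$ uniformly, the latter because $T^{-1/2}(w_{t}-w_{T_{j}^{0}})=O_{p}((\sqrt{T}v_{T})^{-1})=o_{p}(1)$ by Assumption A8. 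Thus the trend and integrated components are \emph{frozen} at their values at $\lambda_{j}^{0}$ and enter the limit only as the deterministic and conditionally constant factors appearing in $\mathbb{D}_{j}(s)$ and $\mathbb{V}_{j}(s)$, while all genuine fluctuation over the window is carried by the stationary block $z_{t}$ and the normalized errors $\eta_{t}$.

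First I would handle the quadratic term $B_{T,j}$. Writing the weight as $(\Sigma_{j}^{0})^{-1}+\mathbbm{1}_{\{T_{j}(r)<t\}}\{(\Sigma_{j+1}^{0})^{-1}-(\Sigma_{j}^{0})^{-1}\}$ splits $B_{T,j}(s,r)$ into a base piece governed by a single covariance and an indicator-correction piece. Each piece is $v_{T}^{2}$ times a sum of order $v_{T}^{-2}$ terms $X_{tT}(\cdot)X_{tT}'$, so the law of large numbers for the stationary second moments (Assumptions A2 and A6), together with the frozen trend and integrated factors, yields convergence to $|s|$ times the limiting second-moment matrix $\mathbb{D}_{j}(s)$, whose regime index $j+\mathbbm{1}_{\{0<s\}}$ records whether the window lies to the left or right of $T_{j}^{0}$. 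Bookkeeping the overlap of $(\underline{T}_{j}(s),\overline{T}_{j}(s)]$ with $\{t>T_{j}(r)\}$ then selects the base covariance $(\Sigma_{j+\mathbbm{1}_{\{r\le s\}}}^{0})^{-1}$ over the full width $|s|$ and produces exactly the factor $\mathbbm{1}_{\{|r|\le|s|\}}|r|$ multiplying $(\Sigma_{j+1}^{0})^{-1}-(\Sigma_{j}^{0})^{-1}$, giving the stated form of $\mathbb{B}_{j}$.

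Next I would treat the score term $W_{T,j}$. Applying the same covariance decomposition and $u_{t}=(\Sigma_{t,\mathcal{K}^{0}}^{0})^{1/2}\eta_{t}$ reduces it to $v_{T}$ times partial sums of $X_{tT}(\Sigma^{0})^{-1}(\Sigma^{0})^{1/2}\eta_{t}$. With window width of order $v_{T}^{-2}$ and scaling $v_{T}$, a functional central limit theorem applies to the partial-sum processes in $s$: the mixing and moment bounds of Assumption A2 and the nondegeneracy conditions of Assumption A7 (which also accommodate within-segment nonstationarity) deliver tightness and the Gaussian limit separately for the $z$-driven part $\mathbb{V}_{z\eta,j}$, the frozen-trend part $\varphi(\lambda_{j}^{0})\otimes\mathbb{V}_{\eta,j}$, and the frozen-integrated part $\mathbb{V}_{w}(\lambda_{j}^{0})\otimes\mathbb{V}_{\eta,j}$. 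Reassembling these with the factor $(\Sigma_{j+\mathbbm{1}_{\{0\le s\}}}^{0})^{1/2}$ reconstitutes $\mathbb{V}_{j}(s)$, and the indicator-correction piece produces the second, $\mathrm{sgn}(r)\mathbbm{1}_{\{|r|\le|s|\}}$ term of $\mathbb{W}_{j}$. Joint convergence of $B_{T,j}$ and $W_{T,j}$ follows because both are continuous functionals of the same underlying partial sums of $(z_{t},\eta_{t},u_{wt})$, to which a joint invariance principle applies.

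The remaining work is joint weak convergence of $\{(B_{T,j},W_{T,j})\}_{j=1}^{m}$ as processes in $(s,r)$. The delicate point is tightness in two arguments: the indicator $\mathbbm{1}_{\{T_{j}(r)<t\}}$ makes the dependence on $r$ discontinuous, so I would control the increments uniformly over both $s$ and $r$ by a generalized H\'ajek--R\'enyi maximal inequality of the type permitted under Assumption A2, and then invoke the continuous mapping theorem. Across distinct $j$ the windows around $T_{1}^{0},\dots,T_{m}^{0}$ are asymptotically disjoint, since the $T_{j}^{0}$ are separated by positive fractions of the sample by Assumption A3, which yields asymptotic independence of the blocks; and Assumption A9 supplies the mutual independence of $\mathbb{V}_{z\eta,j}$, $\mathbb{V}_{\eta\eta,j}$ and $\mathbb{V}_{w}$ that makes the joint limit a clean Gaussian object. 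I expect the main obstacle to be precisely this two-parameter tightness combined with the fact that, because of the integrated regressors, $\mathbb{D}_{j}(s)$ and $\mathbb{V}_{j}(s)$ are random through $\mathbb{V}_{w}(\lambda_{j}^{0})$: one must establish the invariance principle for the stationary- and error-driven fluctuations jointly with, and conditionally on, the limit of the integrated regressors, so that the limit is read correctly as a Gaussian process with a conditionally random covariance structure.
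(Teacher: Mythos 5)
Your proposal matches the paper's proof essentially step for step: the same local window $t=T_{j}^{0}+[sv_{T}^{-2}]$, the same decomposition of the covariance weight into a base piece plus an indicator correction, the same freezing of $\varphi(t/T)$ and $T^{-1/2}w_{t}$ at $\lambda_{j}^{0}$, a local law of large numbers (Assumption A6) for $D_{T,j}$ and a local FCLT (the paper's Lemma A.3, a consequence of Lemma A.1(b)) for the score, assembled via the continuous mapping theorem. The one remark worth making is that the two-parameter tightness you flag as the main obstacle is actually a non-issue in the paper's treatment: the covariance decomposition exhibits $B_{T,j}(s,r)$ and $W_{T,j}(s,r)$ as continuous functionals of the one-parameter processes $D_{T,j}(\cdot)$ and $V_{T,j}(\cdot)$ evaluated at $s$ and at $r$ (with deterministic sign and indicator coefficients), so one-parameter weak convergence plus continuous mapping suffices and no separate maximal inequality in $r$ is needed.
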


The theorem below presents the main result of the paper concerning the limit distribution of the test statistic, which can be expressed 
as the difference of the maxima of a limit process with and without restrictions implied by
the assumption of common breaks.

\begin{theorem}
  \label{theorem:limit-dist}
  Let $\bm{s}_{j}=(s_{1j}, \dots, s_{Gj})'$
  for $j=1,\dots, m$
  and 
  let $\bm{1}$ be a $G \times 1$ vector having 1 at all entries.
  Suppose Assumptions A1-A9 hold. Then, under the null hypothesis
  (\ref{eq:H0}),
  \begin{equation*} 
    CB_{T}
    \Rightarrow
    CB_{\infty}
    :=
    \sup_{\bm{s}_{1}, \dots, \bm{s}_{m} }
    \sum_{j=1}^{m}
    CB_{\infty}^{(j)}( \bm{s}_{j})
    -
    \sup_{s_{1}, \dots, s_{m}}
    \sum_{j=1}^{m}
    CB_{\infty }^{(j)}(s_{j} \cdot \bm{1}),
  \end{equation*}%
  where 
  \begin{eqnarray}
    CB_{\infty}^{(j)}( \bm{s}_{j})
    &:=&
    \mathrm{tr} \notag
    \Big (
    \Pi_{j}(s_{Gj})
    \mathbb{V}_{\eta\eta,j}(s_{G})
    \Big )
    +
    \frac{|s_{Gj}|}{2}
    \mathrm{tr}
    \big (
     \{\Pi_{j}(s_{Gj})\}^2
    \big )  
    - 2
    \sum_{g=1}^{G}
    \mathrm{sgn}(s_{gj})
    \Delta_{gj}' \mathbb{W}_{j}(s_{gj}, s_{Gj})
    \\
    && \notag
    -
    \sum_{g=1}^{G}
    \sum_{h=1}^{G}
    \Delta_{gj}' 
    \Big \{
    \mathbbm{1}_{ \{s_{gj} \vee s_{hj} \le  0\}}
    \mathbb{B}_{j}
    \big ( s_{gj}{\vee}s_{hj}, s_{Gj} \big )
    +
    \mathbbm{1}_{ \{0 <  s_{gj} \wedge s_{hg} \}}
    \mathbb{B}_{j}
    \big ( s_{gj}{\wedge}s_{hj}, s_{Gj} \big )
    \Big \}
    \Delta_{hj}, \\
    \label{eq:pai-def}
    \Pi_{j}(s_{Gj}) 
    &:=&
    \left \{
    \begin{array}{cl}
      \big (\Sigma _{j}^{0} \big)^{-1/2}
      \Upsilon _{j}
      \big (\Sigma _{j+1}^{0} \big)^{-1}
      \big (\Sigma_{j}^{0} \big)^{1/2},
      & \ \mathrm{if} \ s_{Gj} \leq 0\\
      -
      \big (\Sigma _{j+1}^{0} \big)^{-1/2}
      \Upsilon _{j}(\Sigma _{j}^{0})^{-1}
      \big (\Sigma_{j+1}^{0} \big)^{1/2},
      &
      \ \mathrm{if} \ s_{Gj}>0 
    \end{array}
    \right .,
  \end{eqnarray}
  with 
  $\Delta_{gj}:= 
   \big ( \|\delta_{j}\|^2+ \mathrm{tr}(\Phi_{j}^2) \big )^{-1/2} 
   \delta_{gj}
  $
  and 
  $
  \Upsilon_{j}
  :=
  \big ( \|\delta_{j}\|^2+ \mathrm{tr}(\Phi_{j}^2) \big )^{-1/2} 
  \Phi_{j}
  $.
\end{theorem}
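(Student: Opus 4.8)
The plan is to build directly on Theorems~\ref{theorem:rate} and~\ref{theorem:parts} together with Lemma~1, reducing the global optimization to a collection of local problems, one around each true break date. First I would invoke Theorem~\ref{theorem:rate}(a) to confine both $\hat{\mathcal{K}}$ and $\tilde{\mathcal{K}}$ to the compact set $\bar\Xi_M$ with probability approaching one, and Theorem~\ref{theorem:parts} to discard the restriction term $\sup_{\theta}\ell_{T,R}(\mathcal{K}^0,\theta)$, which is common to the restricted and unrestricted problems and hence cancels in the difference. This yields the representation of $CB_T$ displayed just above Assumption A6, namely a difference of two maxima of $\sum_{j=1}^m\sum_{t=\underline{k}_j+1}^{\overline{k}_j}\{\log f(y_t|X_{tT},\theta^0_{t,\mathcal{K}})-\log f(y_t|X_{tT},\theta^0_{t,\mathcal{T}^0})\}$, the first taken over $\bar\Xi_M$ and the second over $\bar\Xi_{M,H_0}$. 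Because under $H_0$ the true breaks $T_1^0,\dots,T_m^0$ are separated by a positive fraction of the sample, the neighborhoods $[T_j^0-Mv_T^{-2},T_j^0+Mv_T^{-2}]$ are disjoint for $T$ large, so the double sum decouples across $j$ and each regime can be analyzed in isolation.

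Next I would carry out the key local computation for a fixed $j$. Writing $y_t=X_{tT}'\beta^0_{t,\mathcal{T}^0}+u_t$ and reparametrizing the candidate breaks by $s_{gj}$ through $k_{gj}=T_j^0+[s_{gj}v_T^{-2}]$, I would expand the Gaussian log-density difference. Only observations between $\underline{T}_j(\cdot)$ and $\overline{T}_j(\cdot)$ contribute, i.e. those whose regime label under $\mathcal{K}$ disagrees with the truth; for such $t$ the difference in coefficients equals $v_T\delta_{gj}$ and the difference in covariance parameters equals $v_T\Phi_j$, by Assumption A4. A second-order expansion then produces, for the mean part, a linear term in $u_t$ and a quadratic term in $X_{tT}$, which after scaling by $v_T$ and $v_T^2$ are exactly $W_{T,j}(\cdot,\cdot)$ and $B_{T,j}(\cdot,\cdot)$ of Lemma~1; the covariance part produces the trace terms $\mathrm{tr}(\Pi_j(s_{Gj})\mathbb{V}_{\eta\eta,j})$ and $\tfrac{|s_{Gj}|}{2}\mathrm{tr}(\{\Pi_j(s_{Gj})\}^2)$ after expanding $\log|\Sigma|$ and the quadratic form and using $\eta_t=(\Sigma^0_{t,\mathcal{T}^0})^{-1/2}u_t$. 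The higher-order remainder is $o_p(1)$ uniformly on compact $s$-sets, the linearization error being controlled by the condition $\sqrt T v_T/\log T\to\infty$ of Assumption A4.

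The delicate bookkeeping here is that, because the group breaks $k_{gj}$ need not be asymptotically distinct, several of them can lie in the same neighborhood, so that the regime assigned to a given $t$ depends on the relative ordering of the $s_{gj}$. Summing the per-observation contributions over the overlapping regions is what produces the $\vee$ and $\wedge$ operators, the indicators $\mathbbm{1}_{\{|r|\le|s|\}}$ and $\mathrm{sgn}(r)$ in the definitions of $\mathbb{B}_j$ and $\mathbb{W}_j$, and the double sum over $(g,h)$ in $CB_\infty^{(j)}(\bm{s}_j)$; I would verify these combinatorics carefully, as this is precisely the step that goes beyond \cite{Bai2000AEF} and \cite{QuPerron2007Emtca}, where distinctness of breaks removes the overlaps, and I expect it to be the main obstacle. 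The normalizations $\Delta_{gj}$ and $\Upsilon_j$ arise from a final rescaling of the localization variable that absorbs the overall shift magnitude $(\|\delta_j\|^2+\mathrm{tr}(\Phi_j^2))^{1/2}$ into the time scale $s$.

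Finally I would pass to the limit. Joint weak convergence of $\{B_{T,j},W_{T,j}\}_{j=1}^m$ is supplied by Lemma~1, and the companion convergence of the partial sums of $\eta_t\eta_t'-I_n$ to $\mathbb{V}_{\eta\eta,j}$ follows from the functional central limit theorem guaranteed by Assumptions A2 and A7; Assumption A9 renders the relevant limit processes mutually independent, which is what permits the separated form of the limit. The continuous mapping theorem then gives, for each $j$, weak convergence of the local objective to $CB_\infty^{(j)}(\bm{s}_j)$ as a process in $\bm{s}_j$. The unrestricted maximum corresponds to a free supremum over $\bm{s}_j\in\mathbb{R}^G$, whereas the $H_0$ constraint $\mathcal{K}_1=\cdots=\mathcal{K}_G$ forces $\bm{s}_j=s_j\bm{1}$, yielding the second supremum. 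Tightness of the argmax, so that the suprema may be taken over a compact set and the continuous-mapping argument for the $\sup$ functional applies, follows because the quadratic term involving $\mathbb{B}_j$ is negative definite and dominates the linear term for large $|s_{gj}|$, driving the objective to $-\infty$; this is the analogue of the $O_p(1)$ localization in Theorem~\ref{theorem:rate}(a). Summing the independent regime contributions and taking the difference of the two suprema delivers $CB_T\Rightarrow CB_\infty$.
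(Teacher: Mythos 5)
Your proposal follows essentially the same route as the paper's proof: reduction via Theorems 1 and 2 to a difference of localized suprema, a second-order expansion of the Gaussian log-likelihood splitting into the covariance (trace) part and the mean part built from $W_{T,j}$ and $B_{T,j}$, careful bookkeeping of the overlapping regime assignments across groups, joint convergence from Lemma 1 plus the continuous mapping theorem, and a final change of variables absorbing $\|\delta_j\|^2+\mathrm{tr}(\Phi_j^2)$ into the localization scale. The approach and all key steps match the paper's argument.
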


The limit distribution in Theorem \ref{theorem:limit-dist}
is quite complex and depends on nuisance parameters.
However, they can be consistently estimated and it is easy to show that the
coverage rates will be asymptotically valid provided $\sqrt{T}$-consistent
estimates are used instead of the true values. The various quantities can be
estimated as follows: 
for
$\Delta \tilde{k}_{j}:= \tilde{k}_{j}-\tilde{k}_{j-1}$,
we can use
$\tilde{Q}_{zz,j}
=(\Delta \tilde{k}_{j})^{-1}\sum_{t=\tilde{k}_{j-1} + 1}^{\tilde{k}_{j}}z_{t}z_{t}^{\prime }
$, 
$\tilde{\mu}_{z,j}
 = (\Delta \tilde{k}_{j})^{-1}\sum_{t=\tilde{k}_{j-1}+1}^{\tilde{k}_{j}}z_{t}$, 
$\Delta \tilde{\beta}_{j}:= \tilde{\beta}_{j}-\tilde{\beta}_{j-1}$
and 
$\tilde{\Sigma}_{j}
=
(\Delta \tilde{k}_{j})^{-1}\sum_{t=\tilde{k}_{j-1} + 1}^{\tilde{k}_{j}}\tilde{u}_{t}\tilde{u}_{t}^{\prime }
$,  
$\tilde{\Delta}_{gj}:= 
\big \{ \|\Delta \tilde{\beta}_{j}\|^2 + \mathrm{tr}\big ( ( \Delta \tilde{\Sigma}_{j})^2\big)
\big \}^{-1/2}
\sum_{l \in \mathcal{G}_{g}}
e_{l}\circ \Delta \tilde{\beta}_{j+1}
$ 
and 
$\tilde{\Upsilon}_{j}:= 
\big \{ \|\Delta \tilde{\beta}_{j}\|^2 + \mathrm{tr}\big ( ( \Delta \tilde{\Sigma}_{j})^2\big)
\big \}^{-1/2}
\Delta \tilde{\Sigma}_{j}
$, 
where 
$\Delta \tilde{\beta}_{j}:= \tilde{\beta}_{j}-\tilde{\beta}_{j-1}$
and
$\Delta \tilde{\Sigma}_{j}:= \tilde{\Sigma}_{j}-\tilde{\Sigma}_{j-1}$.
Also, the estimates of the long run variances of $\{z_{t}\otimes \eta_{t}\}$
and $\{\eta_{t}\eta_{t} - I_{n}\}$ 
can be constructed using 
a method based on a weighted sum of sample autocovariances of the relevant
quantities, as discussed in \cite{Andrews1991Emtca}, for instance. Though only 
$\sqrt{T}$-consistent estimates of $(\beta ,\Sigma )$ are needed, it is likely that
more precise estimates of these parameters will lead to better finite sample
coverage rates. Hence, it is recommended to use the estimates obtained
imposing the restrictions in (\ref{restrictions}) even though imposing restrictions does not have a
first-order effect on the limiting distribution of the estimates of the
break dates.

In some cases, the limit distribution of the common breaks test can be derived and expressed in a simpler manner. 
For illustration purpose, our supplemental material states the limit distribution of the test under the setup of Examples 1 and 2. 
When the covariance matrix is constant over time
(i.e., $\Sigma_{j}^{0} = \Sigma^{0}$ for $j=1, \dots, m+1$), the limit distribution above can be further simplified as stated in the following corollary.

\begin{corollary}
  \label{corollary:limit-dist}
  Let $\bm{s}_{j}=(s_{1j}, \dots, s_{Gj})'$
  for $j=1,\dots, m$
  and 
  let $\bm{1}$ be a $G \times 1$ vector having 1 at all entries.
  Suppose that Assumptions A1-A9 hold and also that 
  the covariance matrix $\Sigma_{j}^{0}$ is constant over time. 
  Then, under the null hypothesis
  (\ref{eq:H0}),
  \begin{equation*} 
    CB_{T}
    \Rightarrow
    \tilde{CB}_{\infty}
    :=
    \sup_{\bm{s}_{1}, \dots, \bm{s}_{m} }
    \sum_{j=1}^{m}
    \tilde{CB}_{\infty}^{(j)}( \bm{s}_{j})
    -
    \sup_{s_{1}, \dots, s_{m}}
    \sum_{j=1}^{m}
    \tilde{CB}_{\infty }^{(j)}(s_{j} \cdot \bm{1}),
  \end{equation*}%
  where 
  \begin{eqnarray*}
    \tilde{CB}_{\infty}^{(j)}( \bm{s}_{j})
    &:=& 
    - 2
    \sum_{g=1}^{G}
    \mathrm{sgn}(s_{gj})
    \Delta_{gj}' \tilde{\mathbb{W}}_{j}(s_{gj})
    \\
    && 
    -
    \sum_{g=1}^{G}
    \sum_{h=1}^{G}
    \Delta_{gj}' 
    \Big \{
    \mathbbm{1}_{ \{s_{gj} \vee s_{hj} \le  0\}}
    \tilde{\mathbb{B}}_{j}
    \big ( s_{gj}{\vee}s_{hj} \big )
    +
    \mathbbm{1}_{ \{0 <  s_{gj} \wedge s_{hg} \}}
    \tilde{\mathbb{B}}_{j}
    \big ( s_{gj}{\wedge}s_{hj} \big )
    \Big \}
    \Delta_{hj}, 
  \end{eqnarray*}
  with 
  $
  \tilde{\mathbb{W}}_{j}(s)
  :=
  S'
  \big (
  I_{q} \otimes (\Sigma^{0})^{-1/2}
  \big )
  \big [ 
  \mathbb{V}_{z\eta,j}(s)',
  \varphi(\lambda _{j}^{0})' \otimes \mathbb{V}_{\eta ,j}(s)',
  \mathbb{V}_{w}(\lambda _{j}^{0})' \otimes \mathbb{V}_{\eta ,j}(s)'
  \big]'
  $
  and 
  $
    \tilde{\mathbb{B}}_{j}(s)
    :=
    |s|
    S' 
    \mathbb{D}_{j}(s)   
    \otimes 
    (\Sigma^{0})^{-1} 
    S
  $
  for $s \in \mathbb{R}$.
\end{corollary}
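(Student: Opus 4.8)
The plan is to obtain Corollary \ref{corollary:limit-dist} as a direct specialization of Theorem \ref{theorem:limit-dist}, tracking how the constancy of the covariance matrix collapses each of the limiting objects. The starting observation is that if $\Sigma_j^0 = \Sigma^0$ for all $j = 1, \dots, m+1$, then Assumption A4, which posits $\Sigma_{j+1}^0 - \Sigma_j^0 = v_T \Phi_j$ with $\Phi_j$ independent of $T$, forces $\Phi_j = 0$ for every $j$: the left-hand side vanishes identically while $v_T > 0$. This single fact drives the entire simplification, since it removes the variance-break channel from every term of the limit.

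Next I would substitute $\Phi_j = 0$ into the normalizing constants. The factor $\|\delta_j\|^2 + \mathrm{tr}(\Phi_j^2)$ reduces to $\|\delta_j\|^2$, so that $\Upsilon_j = \|\delta_j\|^{-1}\Phi_j = 0$ while $\Delta_{gj} = \|\delta_j\|^{-1}\delta_{gj}$ retains its form. Feeding $\Upsilon_j = 0$ into the piecewise definition (\ref{eq:pai-def}) of $\Pi_j(s_{Gj})$ gives $\Pi_j(s_{Gj}) = 0$ in both branches $s_{Gj} \le 0$ and $s_{Gj} > 0$. Consequently the first two summands of $CB_\infty^{(j)}(\bm{s}_j)$, namely $\mathrm{tr}(\Pi_j(s_{Gj})\mathbb{V}_{\eta\eta,j}(s_{G}))$ and $\frac{|s_{Gj}|}{2}\mathrm{tr}(\{\Pi_j(s_{Gj})\}^2)$, vanish identically, leaving only the two terms that appear in $\tilde{CB}_\infty^{(j)}(\bm{s}_j)$.

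It then remains to verify that those two surviving terms take the asserted simplified form. Substituting $\Sigma_j^0 = \Sigma^0$ into $\mathbb{W}_j(s,r)$ and $\mathbb{B}_j(s,r)$, the $r$-dependent correction pieces are each proportional to $(\Sigma_{j+1}^0)^{-1} - (\Sigma_j^0)^{-1} = 0$ and hence drop out, while the leading pieces carry $(\Sigma_{j+\mathbbm{1}_{\{r\le s\}}}^0)^{-1} = (\Sigma^0)^{-1}$. Absorbing the factor $(\Sigma_{j+\mathbbm{1}_{\{0\le s\}}}^0)^{1/2} = (\Sigma^0)^{1/2}$ that appears inside $\mathbb{V}_j(s)$ into this $(\Sigma^0)^{-1}$ produces $(\Sigma^0)^{-1/2}$, so that $\mathbb{W}_j(s,r)$ collapses to $\tilde{\mathbb{W}}_j(s)$ and $\mathbb{B}_j(s,r)$ collapses to $|s| S'\mathbb{D}_j(s)\otimes(\Sigma^0)^{-1}S = \tilde{\mathbb{B}}_j(s)$, both now independent of their second argument. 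Thus each summand $CB_\infty^{(j)}(\bm{s}_j)$ reduces to $\tilde{CB}_\infty^{(j)}(\bm{s}_j)$, and since the two suprema in Theorem \ref{theorem:limit-dist} (the unrestricted one over $\bm{s}_1, \dots, \bm{s}_m$ and the restricted one over $s_1, \dots, s_m$ evaluated at $s_j \cdot \bm{1}$) range over the same index sets, they carry over verbatim to deliver $CB_T \Rightarrow \tilde{CB}_\infty$.

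The result is therefore a computation rather than a new limit theorem, so I do not anticipate any genuine analytic obstacle beyond Theorem \ref{theorem:limit-dist} itself. The only point requiring care is the logical implication at the outset, that a time-invariant covariance matrix forces $\Phi_j = 0$ and thereby annihilates both the $\Pi_j$-terms and the jump-in-$\Sigma^{-1}$ corrections inside $\mathbb{W}_j$ and $\mathbb{B}_j$; once this is established, every remaining simplification is mechanical.
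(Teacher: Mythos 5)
Your proposal is correct and follows exactly the route the paper intends: the corollary is stated as an immediate specialization of Theorem \ref{theorem:limit-dist}, and your chain (constant $\Sigma_j^0$ plus Assumption A4 forces $\Phi_j=0$, hence $\Upsilon_j=0$ and $\Pi_j(s_{Gj})=0$, killing the first two terms, while $(\Sigma_{j+1}^0)^{-1}-(\Sigma_j^0)^{-1}=0$ removes the second arguments of $\mathbb{W}_j$ and $\mathbb{B}_j$ and the $(\Sigma^0)^{1/2}$ inside $\mathbb{V}_j$ combines with $(\Sigma^0)^{-1}$ to give $(\Sigma^0)^{-1/2}$) is precisely the computation required. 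No gaps.
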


As another immediate corollary to Theorem \ref{theorem:limit-dist}, when no integrated variables are present,  the limit distribution of the test for a common break date only involves the pre and post break date regimes, as is the case for the limit distribution of the estimates when multiple breaks are present \citep[e.g.][]{BaiPerron1998Emtca}.  
Also, the above result can be easily extended to test the hypothesis of
common break dates for a part of the parameter groups, 
while the break dates of the other groups are not necessarily common.
We illustrate the application of the test for common breaks in (\ref{eq:H0})
and its variant through an application in Section 5.

As discussed in Section 1,
there is one additional layer of difficulty compared to Bai and
Perron (1998) or Qu and Perron (2007). 
In their analysis, 
the limit distribution can be evaluated using a closed form solution
after some transformation,  
while no such solution is available here
and thus we need to resort simulations to obtain the critical values. 
This involves first simulating the Wiener processes   
appearing in the various Brownian motion processes by partial sums of 
$i.i.d.$ normal random vectors (independent of each others given Assumption
A9). One can then evaluate one realization of the limit distribution by
replacing unknown values by their estimates as stated above. The procedure
is then repeated many times to obtain the relevant quantiles. 
While conceptually straightforward, this procedure is nevertheless 
computationally intensive. The reason is that for each replication
we need to search over many possible combinations of all the
permutations of the locations of the break dates. The procedure suggested is
nevertheless quick enough to be feasible for common applications involving
testing for few common break dates but the computational burden increases
exponentially with the number of common breaks being tested. 
In Section 4, we propose an alternative approach to alleviate this
issue and examine its performance. 
        
\subsection{Asymptotic power analysis}

In this subsection, we provide an asymptotic power analysis of the test
statistic $CB_{T}$ when using a critical value $c_{\alpha }^{\ast}$
at the significance level $\alpha$
from the asymptotic null distribution $CB_{\infty}$.
As a fixed alternative hypothesis,
we consider, for some $\delta>0$ 
\begin{equation}
  \label{eq:alt-fix}
  H_{1}:
  \max_{1 \le g_{1}, g_{2} \le G}
  |
  k_{g_{1},j}^{0}
  -
  k_{g_{2},j}^{0}
  |
  \ge \delta T
  \mathrm{\ \ for \ some \ }
  j = 1, \dots, m.
\end{equation}
Given that 
$k_{gj}^{0} = [T\lambda_{gj}^{0}]$
for $(g, j) \in \{1, \dots, G\} {\times} \{1,...,m\}$
under Assumption A3, 
the above condition is asymptotically equivalent to 
$
  \max_{1 \le g_{1}, g_{2} \le G}
  |\lambda_{g_{1},j}^{0} - \lambda_{g_{2},j}^{0}|
  \ge \delta
$
for some 
$j = 1, \dots, m$,
and 
thus can be considered as a fixed alternative hypothesis
in term of break fractions.
As a local alternative hypothesis, we consider 
\begin{equation}
  \label{eq:alt-local}
  H_{1T}:
  \max_{1 \le g_{1}, g_{2} \le G}
  |
  k_{g_{1},j}^{0}
  -
  k_{g_{2},j}^{0}
  |
  \ge M v_{T}^{-2}
  \mathrm{\ \ for \ some \ }
  j = 1, \dots, m,
\end{equation}%
for some constant $M>0$, where $v_{T}$
satisfies the condition in Assumption A4.
We can also express (\ref{eq:alt-local}) as
$
  \max_{1 \le g_{1}, g_{2} \le G}
  |
  \lambda_{g_{1},j}^{0}
  -
  \lambda_{g_{2},j}^{0}
  |
  \ge M (\sqrt{T}v_{T})^{-2}
$
for some 
$j = 1, \dots, m$.
The following theorem shows that the proposed test statistic 
is consistent against fixed alternatives 
and 
also has non-trivial local power against local
alternatives. 
   
\begin{theorem} 
  \label{theorem:alternative}
  Let 
  $c_{\alpha }^{\ast}:=\inf \big \{c \in \mathbb{R}:\Pr\{CB_{\infty} \leq c \}\geq 1-\alpha \big \}$.
  Suppose that Assumptions A1-A9 hold. Then, 
  (a)
  under the fixed alternative (\ref{eq:alt-fix})
  with any $\delta \in (0,1]$,
  \begin{equation*}
    \lim_{T\rightarrow \infty }
    \Pr
    \big \{ 
    CB_{T}
    >
    c_{\alpha }^{\ast}
    \big \}
    = 1, 
  \end{equation*}%
  (b) under the local alternative (\ref{eq:alt-local}),
  for any $\epsilon>0$, there exits an $M$ 
  defined in
  (\ref{eq:alt-local})
  such that 
  \begin{equation*}
    \lim_{T\rightarrow \infty }
    \Pr 
    \big \{
    CB_{T}
    >
    c_{\alpha }^{\ast}
    \big \}
    > 1 - \epsilon. 
  \end{equation*}
\end{theorem}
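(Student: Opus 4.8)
The plan is to treat the two parts separately, using throughout that $c_{\alpha}^{\ast}$ is a fixed finite quantile of the $O_p(1)$ null limit $CB_{\infty}$ of Theorem \ref{theorem:limit-dist}. For part (a) I would write $CB_T = 2\ell_T(\hat{\mathcal{K}},\hat{\theta}) - 2\ell_T(\tilde{\mathcal{K}},\tilde{\theta})$ and bound the two terms at different orders. Under the fixed alternative (\ref{eq:alt-fix}) the unrestricted model (\ref{eq:dgp-K}) is correctly specified, since it permits distinct break dates across groups; hence Theorem \ref{theorem:rate} applies and, by the same quadratic expansions of the Gaussian quasi-log-likelihood used to prove Theorems \ref{theorem:rate}--\ref{theorem:parts}, one gets $\ell_T(\hat{\mathcal{K}},\hat{\theta}) = O_p(1)$. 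The crux is thus to show that the common-breaks restriction forces a divergent loss, namely $\ell_T(\tilde{\mathcal{K}},\tilde{\theta}) \le -c\,Tv_T^2(1+o_p(1))$ for some $c>0$.

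To obtain this loss I would fix a regime $j$ at which $\max_{g_1,g_2}|k_{g_1 j}^0 - k_{g_2 j}^0| \ge \delta T$ holds. For \emph{any} admissible common location in $\Xi_{\nu, H_{0}}$ there is a sub-sample of length of order $\delta T$ on which at least one parameter group is assigned the wrong regime, so that its coefficient is biased by $v_T\delta_{gj}$ with $\delta_{gj}\neq 0$ (or its covariance by $v_T\Phi_j$). A second-order expansion around the true parameters, taken uniformly over the restricted partitions and over the compromise values $(\tilde{\beta},\tilde{\Sigma})$ that maximise the restricted criterion, shows this systematic misfit cannot be absorbed and contributes a strictly negative term of order $Tv_T^2$. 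Combining the two bounds gives $CB_T \ge c\,Tv_T^2(1+o_p(1))$. Since Assumption A4 yields $\sqrt{T}v_T/\log T\to\infty$, and hence $Tv_T^2=(\sqrt{T}v_T)^2\to\infty$, while $c_{\alpha}^{\ast}$ is finite, we conclude $\Pr\{CB_T>c_{\alpha}^{\ast}\}\to 1$.

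For part (b) I would work in the rescaled coordinates $s_{gj}=v_T^2(k_{gj}-T_j^0)$ on which the limit theory is built. Under the local alternative (\ref{eq:alt-local}) the true configuration has cross-group spread of order $M$ in these coordinates at some regime $j$. Re-centring the weak convergence of the lemma preceding Theorem \ref{theorem:limit-dist} at the separated true locations $\{s_{gj}^0\}$ rather than at a common origin, the same argument delivers $CB_T\Rightarrow CB_{\infty}(M)$, a noncentral analogue of the null limit in which the unrestricted supremum ranges over $\bm{s}_j$ that can track the drifts centred at the separated $s_{gj}^0$, whereas the restricted supremum is still confined to the diagonal $s_{1j}=\cdots=s_{Gj}$. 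Because the negative quadratic forms built from $\mathbb{B}_j$ penalise any common location lying far from the separated true ones, $CB_{\infty}(M)\overset{p}{\to}\infty$ as $M\to\infty$, so $\Pr\{CB_{\infty}(M)>c_{\alpha}^{\ast}\}\to 1$. Given $\epsilon>0$ one selects $M$ large enough that this probability exceeds $1-\epsilon$; the portmanteau step then yields $\lim_{T\to\infty}\Pr\{CB_T>c_{\alpha}^{\ast}\}=\Pr\{CB_{\infty}(M)>c_{\alpha}^{\ast}\}>1-\epsilon$.

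The main obstacle in part (a) is the uniformity of the restricted-loss bound: one must rule out that some common location together with cleverly chosen $(\tilde{\beta},\tilde{\Sigma})$ escapes the $Tv_T^2$ penalty, which requires controlling the supremum over all admissible common partitions simultaneously via the generalized \cite{HajekRenyi1955AMH} type inequality underlying Assumption A2. In part (b) the delicate point is justifying the re-centred weak convergence to $CB_{\infty}(M)$ and then establishing that its quantiles diverge in $M$; here one relies on the dominance of the negative quadratic $\mathbb{B}_j$ terms over the linear Gaussian terms as the separation grows.
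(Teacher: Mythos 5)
Your part (a) is essentially the paper's argument. The paper likewise writes $CB_T = 2\{\ell_T(\hat{\mathcal K},\hat\theta)-\ell_T(\tilde{\mathcal K},\tilde\theta)\}$, obtains $\ell_T(\hat{\mathcal K},\hat\theta)=O_p(1)$ from the rates in Theorem \ref{theorem:rate} together with the decomposition lemma, and then observes that any common candidate $\tilde k_j$ must lie at distance at least $\delta T/2$ from some true $k_{gj}^0$, so that the fitted coefficient of the corresponding group is off by at least $Cv_T$ on a stretch of length of order $\delta T$; Property 2 (the Hajek--R\'enyi-based bound, applied with $m_T=\delta T/2$) then forces $\ell_T(\tilde{\mathcal K},\tilde\theta)\le -|O_p(Tv_T^2)|$. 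The uniformity over common partitions and over the compromise values $(\tilde\beta,\tilde\Sigma)$ that you flag as the main obstacle is exactly what Properties 1 and 2 deliver, so this part is sound and matches the paper.

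Part (b) is where you take a genuinely different, distributional route, and where there is a gap. The local alternative (\ref{eq:alt-local}) only bounds the cross-group separation from \emph{below} by $Mv_T^{-2}$; it does not pin it to that order. Hence the rescaled true locations $s_{gj}^0=v_T^2(k_{gj}^0-T_j^0)$ need not converge (the separation could be of order $v_T^{-2}\log T$, say, while still $o(T)$), the re-centred weak limit $CB_\infty(M)$ you invoke need not exist for every DGP in the alternative class, and the portmanteau step then has nothing to converge to. The paper avoids constructing a local limit distribution altogether: from $\max_j\max_g|\tilde k_j-k_{gj}^0|\ge Mv_T^{-2}/2$ it splits into two cases --- deviations exceeding $Dv_T^{-2}\log T$, handled by the argument of Proposition A.1, and deviations in $[Mv_T^{-2}/2,\,Dv_T^{-2}\log T]$, handled by the argument of Theorem \ref{theorem:rate}(a) via Property 2 --- obtaining $\ell_T(\tilde{\mathcal K},\tilde\theta)\le -|O_p(M)|$ uniformly, hence $CB_T\ge|O_p(M)|$, and then takes $M$ large. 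Your approach can be repaired either by treating separations of order exactly $Mv_T^{-2}$ with the noncentral limit and deferring larger separations to the fixed-alternative argument, or, more economically, by replacing the limit object $CB_\infty(M)$ with the paper's direct finite-sample lower bound on the restricted loss; as written, however, the claimed convergence $CB_T\Rightarrow CB_\infty(M)$ is not available on the whole alternative class, and establishing it even in the exactly-local case would require redoing the analysis of Lemma 1 and Theorem \ref{theorem:limit-dist} with separated drift locations, which is substantial unstated work.
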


\section{Monte Carlo simulations}
 
This section provides simulation results about the finite sample
performance of the test in terms of size and power.
We first consider a direct simulation-based approach to obtain the critical values
and then a more computationally efficient algorithm. 
As a data generating process (DGP),
we adopt a similar setup to the one used in \cite{BaiLumsdaineStock1998}, 
namely a bivariate autoregressive system with a single break in intercepts
as in Example 1. 
Hence, only the intercepts are allowed to change at some dates $k_{i1}$
for equation $i \in \{1,2\}$.
We test the null hypothesis $H_0: k_{11} = k_{21}$
against the alternative hypothesis $H_1: k_{11} \not= k_{21}$. 
The number of
observations is set to $T=100$, and we use $500$ replications.
Results are reported
for autoregressive parameters $\alpha \in \{0.0,0.4, 0.8\}$.
We set $\mu_{i1}=1$ and let $\delta _{i}:=\mu_{i2}-\mu_{i1}$,  
the magnitude of the mean shift, take values
$\{0.50, 0.75, 1.00, 1.25, 1.50\}$.

\textbf{A direct simulation-based approach:}
We first present results when we resort direct simulations to obtain the critical values, which
involves simulating the Wiener processes by partial sums of i.i.d.~normal random vectors
and 
searching over all possible combinations of the break dates. 
Given the computational cost, we 
choose a simple setup and focus on limited cases.
To examine the empirical sizes and power,
we here consider the errors $(u_{1t},u_{2t})^{\prime }$ following $i.i.d.$ $N(0,I_{2})$
and we use 3,000 repetitions to generate the critical values.

We first examine the empirical rejection frequencies under the null hypothesis
that $k_{11} = k_{21} = 50$
with a trimming parameter $\nu=0.15$.
The results are reported in Table 1 for nominal sizes of 10\%, 5\% and 1\%.
First, when the autoregressive process has no or moderate dependency 
($\alpha=0.0$ or $\alpha=0.4$), the empirical size of the test 
is either slightly conservative or close to the nominal size. 
Given the small sample size, this size property is satisfactory. 
When the autoregressive parameter is close to the boundary of the non-stationary region,
e.g.~$\alpha =0.8$, as expected there are some liberal size distortions.
When the magnitudes of the breaks are small, the test tends to over-reject the null
hypothesis.
This is due to the fact that for very small breaks the break date estimates are quite imprecise and are more likely to be affected by the highly dependent series than the break sizes themselves, 
so that the test depends on the log likelihoods evaluated outside neighborhoods of the true break dates. 
When the magnitude of the break sizes increases, 
the size of the test quickly approaches the nominal level. These
results are encouraging given the small sample size.
 
To analyze power, 
we also set $\mu_{i1}=1$, while we consider values $\{0.50,  1.00, 1.50\}$ for the magnitude of the mean shift.
The break date in the first equation is kept
fixed at $k_{1}=35$, while the break date in the second equation takes values 
$k_{2}=35,40,45,50,55$. 
The power is a function of the difference between the break dates, $k_{2}-k_{1}$.
The results are presented in Figure 1,
where the horizontal axis in each box represents the difference $k_{2}-k_{1}$
and 
the vertical axis shows the empirical rejection frequency. 
As before, when the magnitudes of the breaks are small, 
the data are 
not informative enough to reject the common breaks null hypothesis
and 
the test has little power.
However, when the
magnitudes of the changes reach 1, the power increases rapidly as the
distance between the break dates increases. The results are qualitatively
similar for all values of $\alpha $ considered.

\textbf{An alternative approach:}
The direct simulation-based procedure involves a combinatorial optimization problem
and 
the computational burden increases exponentially with the number of common breaks being tested.
Such a procedure may be feasible for a small number of breaks
in a parsimonious system. However, in more general cases, it may be 
prohibitive. Hence, we also propose an alternative approach 
that solves this problem, 
using heuristic algorithms that find approximate, if not optimal, solutions. 
Because heuristic algorithms have mainly been developed to optimize functions
having explicit forms,   
we use the Karhunen-Lo{\`e}ve (KL) representation of stochastic processes, 
which expresses a Brownian motion as an infinite sum 
of sine functions with independent Gaussian random multipliers
\citep[see][p.~26, for instance]{Bosq2012}. 
A truncated series of the KL representation was used to obtain critical values 
by \cite{Durbin1970} and \cite{Krivyakov1978}, among others.
Similarly, we use a truncated series with 500 terms 
and apply a change of variables 
to approximately obtain an explicit form of the objects being maximized 
in the limit distribution of the common breaks test.
Also, we use the particle swarm optimization method, 
which is an evolutionary computation algorithm developed by 
\cite{Eberhart1995}.\footnote{
  For our simulations, we use the particle swarm algorithm  ``\textit{particleswarm}''
  of the Matlab Global Optimization Toolbox.
  We also tried the genetic algorithm ``\textit{ga}''
  from Matlab
  and found that the two algorithms yield very similar, frequently the same, critical values,
  while the particle swarm algorithm is faster.
}

We examine the performance of the common breaks test using the alternative algorithm
under various setups
in order to show that similar good finite sample properties are obtained 
compared to the direct optimization method.
In addition to the setup used above, we 
consider a trimming value $\nu = 0.10$, a pair of break dates (35, 35)
and 
normal errors with correlation coefficient being 0.5
across equations.
Columns (1)-(4) of Table 2 present 
empirical rejection frequencies under the null hypothesis 
for a nominal size of 5\%.
Whether the errors are correlated or not, 
the empirical size of the test is either conservative or close to the nominal size
in cases of moderate dependency ($\alpha=0.0$ or $\alpha=0.4$).
Also the trimming parameter has little impact. 
With uncorrelated errors, 
there are size distortions 
in cases of high dependency ($\alpha =0.8$) 
and small break sizes. 
When the errors are correlated, however, the empirical sizes get closer 
to the nominal level in all cases. 
This is likely due to efficiency gains from using a SUR estimation method. 
Columns (5)-(6) of Table 2
report the empirical power for the case 
$(k_{1}, k_{2}) = (35, 50)$ 
and the results show satisfactory power, comparable to the direct method.

\section{Application}

In this section, we apply the common breaks test to inflation
series, following \cite{Clark2006JAE}.
He analyzes the persistence of a number of disaggregated inflation series 
based on the sum of the autoregressive (AR) coefficients in an AR model, 
and 
documents that 
the persistence is very high and close to one without allowing for a mean shift,
whereas 
the persistence declines substantially when allowing for one. 
Although such features have been documented theoretically
in the literature \cite[e.g.][]{Perron1990JBES},
he finds that 
the decline in persistence is more pronounced amongst disaggregated measures compared to various aggregate measures.
The issue of importance is that 
\cite{Clark2006JAE} assumes a common mean shift for all series,
following \cite{BaiLumsdaineStock1998},
but 
the validity of this assumption is not established.

We consider a subset of the series analyzed in \cite{Clark2006JAE}, namely the
inflation measures for durables, nondurables and services.
These are taken from the NIPA accounts and cover the period 1984-2002 at the quarterly
frequency; see \cite{Clark2006JAE} for more details. 
Let $\{(y_{1t}, y_{2t}, y_{3t})\}_{t=1}^{T}$ denote the inflation series of 
durables, nondurables and services
and 
consider an AR model allowing for a mean shift
for each series $i= 1, 2, 3$:
\begin{eqnarray*}
  y_{it} 
  = 
  \mu_{i} + \delta_{i} \mathbbm{1}_{ \{ k_{i}+1 \le t \}}
  + \alpha_{i}^{(1)} y_{i,t-1} + \cdots + \alpha_{i}^{(p_{i})} y_{i,t-p_{i}} + u_{it}, \ \ \ \
  t = 1, \dots, T,
\end{eqnarray*}
where 
$\mu_{i}$ is an intercept parameter,  
$\delta_{i}$ is the magnitude of the mean shift with $k_{i}$ being a break date.
The parameters,
$\alpha_{i}^{(1)}, \dots, \alpha_{i}^{(p_{i})}$,
are AR coefficients with $p_{i}$ denoting the lag length 
and 
$u_{it}$ is an error term.
The persistence of each series is measured by the sum $\alpha_{i}^{(1)} + \cdots + \alpha_{i}^{(p_{i})}$ 
for $i=1,2, 3$. 
\cite{Clark2006JAE} uses the Akaike information criterion (AIC) 
to select the AR lag length such that $(p_{1}, p_{2}, p_{3}) = (4, 5, 3)$
and 
also presents some evidence to support a mean shift in the AR models by applying break tests
for each series and for groups.

We present our empirical results in Table 3.  
We first replicate a part of the results in \cite{Clark2006JAE}.
We find that 
when not allowing for a mean shift, the persistence measure is indeed
quite high ranging from 0.855 to 0.921.
Also, the persistence measure decreases to a large extent for non-durables and services but not so much for durables 
when a common break is imposed for the intercept 
at the break date 1993:Q1,
which is not estimated but treated as known
in \cite{Clark2006JAE}.
When we use the Seemingly Unrelated Regressions (SUR) method
with an unknown common break date, following \cite{BaiLumsdaineStock1998},
the point estimates are similar expect that the break date is estimated at 1992:Q1.

We now use our test to assess the validity of the common breaks specification.
In Table 3, we report values of the test statistic for several null hypotheses
as well as critical values corresponding to a 5\% significance level,
obtained through 
the computationally efficient algorithm
described in Section 4
with 3,000 repetitions.
First, we consider the null hypothesis of common breaks in the three inflation series,
i.e., $H_{0}: k_{1} = k_{2} = k_{3}$. The value of the test statistic is 9.015
and the critical value is 5.242, so that the test rejects the null hypothesis of common breaks
at the 5\% significance level. 
Next, we test for common breaks in two inflation series 
within the full system of the three inflation series, separately. 
That is, we separately calculate the test statistic for 
$H_{0}: k_{1} = k_{2}$,
$H_{0}: k_{1} = k_{3}$,
and  
$H_{0}: k_{2} = k_{3}$.
The values of the test statistic 
are 9.735 and 7.684 
with corresponding critical values 
3.473 and 3.259
for 
$H_{0}: k_{1} = k_{2}$
and 
$H_{0}: k_{1} = k_{3}$, 
respectively,
and thus 
both hypotheses are rejected 
at the 5\% significance level.
On the other hand, 
the value of the statistic for $H_{0}: k_{2} = k_{3}$
is 0.749 with a critical value of 2.501. 
Thus, we cannot reject the null hypothesis of common breaks in the nondurables and service series.

We then estimate a system with the three inflation series imposing a common break only in 
the nondurables and service series (i.e., $k_{2} = k_{3}$),
estimated at 1992:Q1, 
which is the same as when allowing for an unknown common break date in all series
(the parameter estimates are also broadly similar).
Things are quite different for the durables series. In this case, the estimate of the
break date is 1995:Q1. What is interesting is that with this break date the decrease in
persistence is very important with an estimate of 0.324 compared to 0.805 obtained assuming a common break date across the three series. 
Hence, allowing for different break dates for durables and the other 
series, 
we document a substantial decline in the persistence measure across all three series.
Moreover, we report the 95\% confidence intervals for the estimated break dates:
[1994:Q2, 1995:Q4] for durables and [1991:Q3, 1992:Q3] for the others.
These non-overlapping intervals are consistent with our results.

\section{Conclusion}

This paper provides a procedure to test for common breaks across or within
equations. Our framework is very general and allows 
integrated regressors and trends  as well as stationary regressors. 
The test considered is the
quasi-likelihood ratio test assuming normal errors, though as usual the
limit distribution of the test remains valid with non-normal errors. Of
independent interest, we provide results about the rate of
convergence when searching over all possible partitions subject only to the
requirement that each regime
contains at least as many observations as some positive fraction of the sample size,
allowing break dates not separated by a positive fraction of the sample size 
across equations.     
We propose two approaches to obtain critical values.
Simulations show that the test
has good finite sample properties. 
We also provide an application to issues related to 
level shifts and persistence for
various measures of inflation to illustrate its usefulness.

\clearpage  
\setstretch{0.3}           
\setlength{\bibsep}{8pt}
\bibliographystyle{elsarticle-harv.bst} 
\bibliography{REF.bib}

\begin{thebibliography}{57}
\expandafter\ifx\csname natexlab\endcsname\relax\def\natexlab#1{#1}\fi
\expandafter\ifx\csname url\endcsname\relax
  \def\url#1{\texttt{#1}}\fi
\expandafter\ifx\csname urlprefix\endcsname\relax\def\urlprefix{URL }\fi

\bibitem[{Andrews(1991)}]{Andrews1991Emtca}
Andrews, D. W.~K., 1991. Heteroskedasticity and autocorrelation consistent
  covariance matrix estimation. Econometrica 59~(3), 817--58.

\bibitem[{Andrews(1993)}]{Andrews1993Emtca}
Andrews, D. W.~K., 1993. Tests for parameter instability and structural change
  with unknown change point. Econometrica 61~(4), 821--56.

\bibitem[{Aue and Lee(2011)}]{aue2011}
Aue, A., Lee, T. C.~M., 2011. On image segmentation using information theoretic
  criteria. Ann. Statist. 39~(6), 2912--2935.

\bibitem[{Bai(1994)}]{Bai1994JTSA}
Bai, J., 1994. Least squares estimation of a shift in linear processes. Journal
  of Time Series Analysis 15~(5), 453--472.

\bibitem[{Bai(1995)}]{Bai1995ET}
Bai, J., 1995. Least absolute deviation estimation of a shift. Econometric
  Theory 11~(03), 403--436.

\bibitem[{Bai(1997)}]{Bai1997REStat}
Bai, J., 1997. Estimation of a change point in multiple regression models. The
  Review of Economics and Statistics 79~(4), 551--563.

\bibitem[{Bai(1998)}]{Bai1998JSPI}
Bai, J., 1998. Estimation of multiple-regime regressions with least absolutes
  deviation. Journal of Statistical Planning and Inference 74~(1), 103--134.

\bibitem[{Bai(2000)}]{Bai2000AEF}
Bai, J., 2000. Vector autoregressive models with structural changes in
  regression coefficients and in variance-covariance matrices. Annals of
  Economics and Finance 1~(2), 303--339.

\bibitem[{Bai(2010)}]{Bai2010JoE}
Bai, J., 2010. Common breaks in means and variances for panel data. Journal of
  Econometrics 157~(1), 78--92.

\bibitem[{Bai et~al.(1998)Bai, Lumsdaine, and Stock}]{BaiLumsdaineStock1998}
Bai, J., Lumsdaine, R.~L., Stock, J.~H., 1998. Testing for and dating common
  breaks in multivariate time series. Review of Economic Studies 65~(3),
  395--432.

\bibitem[{Bai and Perron(1998)}]{BaiPerron1998Emtca}
Bai, J., Perron, P., 1998. Estimating and testing linear models with multiple
  structural changes. Econometrica 66~(1), 47--78.

\bibitem[{Bai and Perron(2003)}]{BaiPerron2003JAE}
Bai, J., Perron, P., 2003. Computation and analysis of multiple structural
  change models. Journal of Applied Econometrics 18~(1), 1--22.

\bibitem[{Baltagi et~al.(2016)Baltagi, Feng, and Kao}]{BaltagiFengKao2015JoE}
Baltagi, B.~H., Feng, Q., Kao, C., 2016. Estimation of heterogeneous panels
  with structural breaks. Journal of Econometrics 191~(1), 176--195.

\bibitem[{Bosq(2012)}]{Bosq2012}
Bosq, D., 2012. Linear Processes in Function Spaces: Theory and Applications.
  Vol. 149. Springer Science \& Business Media.

\bibitem[{Breitung and Eickmeier(2011)}]{BreitungEickmeier2011JoE}
Breitung, J., Eickmeier, S., 2011. Testing for structural breaks in dynamic
  factor models. Journal of Econometrics 163~(1), 71--84.

\bibitem[{Cavaliere and Taylor(2007)}]{CT2007}
Cavaliere, G., Taylor, A.~R., 2007. Testing for unit roots in time series
  models with non-stationary volatility. Journal of Econometrics 140~(2),
  919--947.

\bibitem[{Cheng et~al.(2016)Cheng, Liao, and Schorfheide}]{ChenEtAl2016RES}
Cheng, X., Liao, Z., Schorfheide, F., 2016. Shrinkage estimation of
  high-dimensional factor models with structural instabilities. The Review of
  Economic Studies 83~(4), 1511--1543.

\bibitem[{Clark(2006)}]{Clark2006JAE}
Clark, T.~E., 2006. Disaggregate evidence on the persistence of consumer price
  inflation. Journal of Applied Econometrics 21~(5), 563--587.

\bibitem[{Corradi(1999)}]{Corradi1999ET}
Corradi, V., 1999. Deciding between {I}(0) and {I}(1) via flil-based bounds.
  Econometric Theory 15~(05), 643--663.

\bibitem[{Corradi and Swanson(2014)}]{CorradiSwanson2014JoE}
Corradi, V., Swanson, N.~R., 2014. Testing for structural stability of factor
  augmented forecasting models. Journal of Econometrics 182~(1), 100--118.

\bibitem[{Cs{\"o}rg{\"o} and Horv{\'a}th(1997)}]{CH1997Book}
Cs{\"o}rg{\"o}, M., Horv{\'a}th, L., 1997. Limit Theorems in Change-Point
  Analysis. John Wiley \& Sons Inc.

\bibitem[{Davidson(1994)}]{Davidson1994Book}
Davidson, J., 1994. Stochastic Limit Theory: An Introduction for
  Econometricians. Oxford University Press, UK.

\bibitem[{Davis(1991)}]{Davis2001}
Davis, L. (Ed.), 1991. Handbook of Genetic Algorithms. Van Nostrand Reinhold.

\bibitem[{Deng and Perron(2008)}]{DP2008JoE}
Deng, A., Perron, P., 2008. A non-local perspective on the power properties of
  the cusum and cusum of squares tests for structural change. Journal of
  Econometrics 142~(1), 212--240.

\bibitem[{Dragomir(2016)}]{Dragomir2016}
Dragomir, S.~S., 2016. Logarithmic inequalities for two positive numbers via
  {T}aylor's expansion with integral remainder. RGMIA Research Report
  Collection 19, 139.

\bibitem[{Durbin(1970)}]{Durbin1970}
Durbin, J., 1970. Asymptotic distributions of some statistics based on the
  bivariate sample distribution functions. In: Puri, M.~L. (Ed.), Nonparametric
  Techniques in Statistical Inference. London: Cambridge Univ. Press, pp.
  435--449.

\bibitem[{Eberhart and Kennedy(1995)}]{Eberhart1995}
Eberhart, R., Kennedy, J., 1995. A new optimizer using particle swarm theory.
  In: Micro Machine and Human Science, 1995. MHS'95., Proceedings of the Sixth
  International Symposium on. IEEE, pp. 39--43.

\bibitem[{Eberlein(1986)}]{Eberlein1986AoP}
Eberlein, E., 1986. On strong invariance principles under dependence
  assumptions. The Annals of Probability 14~(1), 260--270.

\bibitem[{Eo and Morley(2015)}]{EoMorley2015QE}
Eo, Y., Morley, J., 2015. Likelihood-ratio-based confidence sets for the timing
  of structural breaks. Quantitative Economics 6~(2), 463--497.

\bibitem[{H{\'a}jek and R{\'e}nyi(1955)}]{HajekRenyi1955AMH}
H{\'a}jek, J., R{\'e}nyi, A., 1955. Generalization of an inequality of
  {K}olmogorov. Acta Mathematica Hungarica 6~(3-4), 281--283.

\bibitem[{Hall et~al.(2012)Hall, Han, and Boldea}]{HallHanBoldea2012JoE}
Hall, A.~R., Han, S., Boldea, O., 2012. Inference regarding multiple structural
  changes in linear models with endogenous regressors. Journal of Econometrics
  170~(2), 281--302.

\bibitem[{Hall and Sen(1999)}]{HallSen1999JBES}
Hall, A.~R., Sen, A., 1999. Structural stability testing in models estimated by
  generalized method of moments. Journal of Business \& Economic Statistics
  17~(3), 335--48.

\bibitem[{Han and Inoue(2015)}]{HanInoue2015ET}
Han, X., Inoue, A., 2015. Tests for parameter instability in dynamic factor
  models. Econometric Theory 31~(05), 1117--1152.

\bibitem[{Hansen(1992)}]{Hansen1992JBES}
Hansen, B.~E., 1992. Tests for parameter instability in regressions with {I}(1)
  processes. Journal of Business \& Economic Statistics 10~(3), 321--35.

\bibitem[{Hatanaka and Yamada(2003)}]{HatanakaYamada2003Book}
Hatanaka, M., Yamada, H., 2003. Co-trending. Springer.

\bibitem[{Hendry and Mizon(1998)}]{HendryMizon1998EE}
Hendry, D.~F., Mizon, G.~E., 1998. Exogeneity, causality, and co-breaking in
  economic policy analysis of a small econometric model of money in the {UK}.
  Empirical Economics 23~(3), 267--294.

\bibitem[{Ibragimov(1962)}]{Ibragimov1962SIAM}
Ibragimov, I.~A., 1962. Some limit theorems for stationary processes. Theory of
  Probability \& Its Applications 7~(4), 349--382.

\bibitem[{Kejriwal and Perron(2008)}]{KejriwalPerron2008JoE}
Kejriwal, M., Perron, P., 2008. The limit distribution of the estimates in
  cointegrated regression models with multiple structural changes. Journal of
  Econometrics 146~(1), 59--73.

\bibitem[{Kim(2011)}]{Kim2011JoE}
Kim, D., 2011. Estimating a common deterministic time trend break in large
  panels with cross sectional dependence. Journal of Econometrics 164~(2),
  310--330.

\bibitem[{Kim et~al.(2017)Kim, Oka, Estrada, and Perron}]{Kim2017WP}
Kim, D., Oka, T., Estrada, F., Perron, P., 2017. Inference related to common
  breaks in a multivariate system with joined segmented trends with
  applications to global and hemispheric temperatures, manuscript, Department
  of Economics, Boston University.

\bibitem[{Kim and Perron(2009)}]{KimPerron2009JoE}
Kim, D., Perron, P., 2009. Assessing the relative power of structural break
  tests using a framework based on the approximate {B}ahadur slope. Journal of
  Econometrics 149~(1), 26--51.

\bibitem[{Krivyakov et~al.(1978)Krivyakov, Martynov, and
  Tyurin}]{Krivyakov1978}
Krivyakov, E., Martynov, G., Tyurin, Y.~N., 1978. On the distribution of the
  $\omega^2$ statistics in the multi-dimensional case. Theory of Probability \&
  Its Applications 22~(2), 406--410.

\bibitem[{Kurozumi and Tuvaandorj(2011)}]{KurozumiTu2011JoE}
Kurozumi, E., Tuvaandorj, P., 2011. Model selection criteria in multivariate
  models with multiple structural changes. Journal of Econometrics 164~(2),
  218--238.

\bibitem[{Lee(2000)}]{Lee2000JASA}
Lee, T. C.~M., 2000. A minimum description length-based image segmentation
  procedure, and its comparison with a cross-validation-based segmentation
  procedure. Journal of the American Statistical Association 95~(449),
  259--270.

\bibitem[{Oka and Qu(2011)}]{OkaQu2011JoE}
Oka, T., Qu, Z., 2011. Estimating structural changes in regression quantiles.
  Journal of Econometrics 162~(2), 248--267.

\bibitem[{Perron(1990)}]{Perron1990JBES}
Perron, P., 1990. Testing for a unit root in a time series with a changing
  mean. Journal of Business \& Economic Statistics 8~(2), 153--62.

\bibitem[{Perron(2006)}]{Perron2006HB}
Perron, P., 2006. Dealing with structural breaks. In: Patterson, K., Mills, T.
  (Eds.), Palgrave Handbook of Econometrics, Vol. 1: Econometric Theory.
  Palgrave Macmillan, pp. 278--352.

\bibitem[{Perron and Yamamoto(2014)}]{PerronYamamoto2014}
Perron, P., Yamamoto, Y., 2014. A note on estimating and testing for multiple
  structural changes in models with endogenous regressors via {2SLS}.
  Econometric Theory 30~(02), 491--507.

\bibitem[{Perron and Yamamoto(2015)}]{PerronYamamoto2015JAE}
Perron, P., Yamamoto, Y., 2015. Using {OLS} to estimate and test for structural
  changes in models with endogenous regressors. Journal of Applied Econometrics
  30~(1), 119--144.

\bibitem[{Perron and Yamamoto(2016)}]{PerronYamamoto2016ER}
Perron, P., Yamamoto, Y., 2016. On the usefulness or lack thereof of optimality
  criteria for structural change tests. Econometric Reviews 35~(5), 782--844.

\bibitem[{Qian and Su(2016)}]{QianSu2016JoE}
Qian, J., Su, L., 2016. Shrinkage estimation of common breaks in panel data
  models via adaptive group fused lasso. Journal of Econometrics 191~(1),
  86--109.

\bibitem[{Qu(2008)}]{Qu2008JoE}
Qu, Z., 2008. Testing for structural change in regression quantiles. Journal of
  Econometrics 146~(1), 170--184.

\bibitem[{Qu and Perron(2007)}]{QuPerron2007Emtca}
Qu, Z., Perron, P., 2007. Estimating and testing structural changes in
  multivariate regressions. Econometrica 75~(2), 459--502.

\bibitem[{Saikkonen(1991)}]{Saikkonen1991ET}
Saikkonen, P., 1991. Asymptotically efficient estimation of cointegration
  regressions. Econometric Theory 7, 1--21.

\bibitem[{Stock and Watson(1993)}]{SW1993Emtca}
Stock, J.~H., Watson, M.~W., 1993. A simple estimator of cointegrating vectors
  in higher order integrated systems. Econometrica 61, 783--820.

\bibitem[{Su and Xiao(2008)}]{SuXiao2008SPL}
Su, L., Xiao, Z., 2008. Testing for parameter stability in quantile regression
  models. Statistics \& Probability Letters 78~(16), 2768--2775.

\bibitem[{Yamamoto and Tanaka(2015)}]{YamamotoTanaka2015JoE}
Yamamoto, Y., Tanaka, S., 2015. Testing for factor loading structural change
  under common breaks. Journal of Econometrics 189~(1), 187--206.

\end{thebibliography}

\newpage
\clearpage
\baselineskip=15pt 
\setstretch{0.3}      

\section*{Appendix}

\setcounter{section}{0} 
\setcounter{equation}{0} 
\setcounter{lemma}{0}\setcounter{page}{1}\setcounter{proposition}{0} %
\renewcommand{\thepage}{A-\arabic{page}} \renewcommand{\theequation}{A.%
\arabic{equation}}\renewcommand{\thelemma}{A.\arabic{lemma}} 
\renewcommand{\theproposition}{A.\arabic{proposition}}

Throughout the appendix, 
we use $C$, $C_{1}$,$C_{2}$, $\dots$ to denote generic positive constants without further
clarification.
Also, 
we use $\mathrm{diag}(\cdot)$ to denote the operator that generates a square diagonal matrix with its diagonal entries being equal to its inputs. 
The key ingredients in the proofs are 
a Strong Approximation Theorem (SAT), 
a Functional Central Limit Theorem (FCLT) and 
a generalized Hajek-Renyi inequality.
We first state two technical lemmas.



\vspace{0.1cm}
\begin{lemma}
  Let $\{\varsigma_{t}\}_{t \in \mathbb{Z}}$ 
  be a sequence of mean-zero, $\mathbb{R}^{d}$-valued random
  vectors satisfying Assumptions A2 and A7. 
  Define $S_{k}(\ell )=\sum_{t=\ell +1}^{\ell +k}\varsigma _{t}$, then, (a) (SAT) 
  the covariance matrix of $k^{-1/2}S_{k}(\ell ),$ $\Omega
  _{k},$ converge, with the limit denoted by $\Omega $, and there exists a
  Brownian Motion $(W(t))_{t\geq 0}$ with covariance matrix $\Omega $ such
  that $\sum_{i=1}^{t}\varsigma _{i}-W\left( t\right) =O_{a.s}(t^{1/2-\kappa })$ for
  some $\kappa >0;$ (b) (FCLT) $T^{-1/2}\sum_{t=1}^{[Tr]}\varsigma_{t}\Rightarrow
  \Omega^{1/2} W^{\ast }(r)$, where $W^{\ast }(r)$ is a $\mathbb{R}^{d}$-valued 
  vector of independent Wiener processes and ``$\Rightarrow $'' denotes weak convergence under the
  Skorohod topology.
\end{lemma}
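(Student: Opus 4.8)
The plan is to prove part (a) first and to obtain part (b) as a routine consequence. The two components of part (a) are the convergence of the covariance matrix $\Omega_k \to \Omega$ and the almost-sure approximation by a Brownian motion, and I would establish them in that order, since the limiting covariance $\Omega$ must be identified before the approximating process $W(\cdot)$ can be specified.

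For the covariance convergence I would treat the two regimes of Assumption A7 separately. In the weakly stationary case, writing $\Gamma(h):=E[\varsigma_t \varsigma_{t+h}']$, we have $\Omega_k = \sum_{|h|<k}(1-|h|/k)\Gamma(h)$, which does not depend on $\ell$. Assumption A2 supplies $\alpha$-mixing of size $-(4+\delta)/\delta$ together with uniformly bounded $(4+\delta)$-th moments, so a covariance inequality of Davydov type gives $\|\Gamma(h)\| \le C\,\alpha(|h|)^{(2+\delta)/(4+\delta)}$, and the stated mixing size makes this bound summable; dominated convergence then yields $\Omega_k \to \Omega = \sum_{h\in\mathbb{Z}}\Gamma(h)$. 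In the non-stationary case, the bound $|k^{-1}E[(S_{k}(\ell))_i (S_{k}(\ell))_s]-w_{i,s}| \le k^{-\psi}$ in Assumption A7(ii), being uniform in $\ell$, delivers $\Omega_k \to \Omega = [w_{i,s}]$ directly at rate $k^{-\psi}$. Positive definiteness of $\Omega$ follows from A7(ii) in that case and from the variance lower bound in A7(i) in the stationary one.

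With $\Omega$ in hand, I would invoke a multivariate strong invariance principle for $\alpha$-mixing sequences of the Kuelbs--Philipp / Eberlein type. The hypotheses of such a theorem are precisely the moment and dependence conditions collected in Assumption A2: the $(4+\delta)$-th moment bound controls the tails, while the mixing size $-(4+\delta)/\delta$, which exceeds $9$ for $\delta<1/2$, forces a polynomial decay of $\alpha(n)$ fast enough to produce an approximation error of order $O_{a.s.}(t^{1/2-\kappa})$ for some $\kappa>0$. This produces a Brownian motion $W(\cdot)$ with covariance matrix $\Omega$ satisfying $\sum_{i=1}^t \varsigma_i - W(t)=O_{a.s.}(t^{1/2-\kappa})$, completing part (a). Part (b) is then immediate: by the scaling property of Brownian motion, $T^{-1/2}W([Tr]) \Rightarrow \Omega^{1/2}W^{*}(r)$ for a vector $W^{*}$ of independent standard Wiener processes, and the strong approximation gives $T^{-1/2}\big(\sum_{t=1}^{[Tr]}\varsigma_t - W([Tr])\big) = O_{a.s.}(T^{-\kappa})=o(1)$ uniformly in $r\in[0,1]$, so the two partial-sum processes share the same weak limit under the Skorohod topology.

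I expect the main obstacle to be the non-stationary case of the strong approximation. Most textbook invariance principles are stated for stationary mixing sequences, whereas here the process need only satisfy the uniform covariance-approximation rate in A7(ii) within each segment. Handling this requires a version of the theorem that permits within-segment non-stationarity while still controlling the growth and local homogeneity of the partial-sum variances, and verifying that the uniform rate $k^{-\psi}$ is exactly what licenses the block-and-approximate construction underlying the $O_{a.s.}(t^{1/2-\kappa})$ bound is the delicate point; the stationary case, by contrast, falls squarely within the classical statements and requires only the summability established above.
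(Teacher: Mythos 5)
Your proposal is correct and follows essentially the same route as the paper, which does not prove the lemma directly but cites Lemma A.1 of Qu and Perron (2007), whose argument rests on exactly the ingredients you identify: an Eberlein-type strong invariance principle for $\alpha$-mixing sequences under the moment and mixing-size conditions of Assumption A2, the covariance conditions of Assumption A7 to identify $\Omega$ in both the stationary and non-stationary regimes, and the FCLT in part (b) obtained as a consequence of the strong approximation in part (a).
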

\vspace{0.1cm}



The above lemma is proved in Lemma A.1 of \cite{QuPerron2007Emtca}, 
who use Theorem 2 in \cite{Eberlein1986AoP} together with the arguments of \cite{Corradi1999ET}.
The following lemma is an extension of the Hajek-Renyi inequality.


\vspace{0.1cm}
\begin{lemma}
  \label{lemma:HRI}
  Suppose that 
  Assumptions A1, A2 and A5 hold. 
  Let 
  $\{b_{k}\}_{k \in \mathbb{N}}$
  be a sequence of positive, non-increasing constants
  and 
  let
  $\{\xi_{tT}\}$
  denote 
  either 
  $\{X_{tT} \Sigma_{t, \mathcal{K}}^{-1} u_{t}\}$
  or 
  $
  \{\eta_{t} \eta_{t}' - I_{n}\}
  $.    
  Then, 
  for any $B >0$ and 
  for any $k_{1}, k_{2} \in \mathbb{N}$
  with
  $k_{1} < k_{2}$,
  \begin{equation*}
    \Pr
    \bigg \{
    \sup_{k_{1}  \le k \le k_{2}}
    \frac{1}{k b_{k}}
    \bigg \|
      \sum_{t=1}^{k}
      \xi_{t T}
    \bigg \|
      > 
      B
    \bigg \}
    \le
    \frac{C}{B^{2}}
    \bigg ( 
    \frac{1}{k_{1} b_{k_{1}}^{2}}
    +
    \sum_{k=k_{1}+1}^{k_{2}} 
    \frac{1}{(k b_{k})^{2}}
    \bigg).
  \end{equation*}
\end{lemma}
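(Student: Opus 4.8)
The plan is to derive the bound in two steps. First, establish a uniform second-moment bound on the increments of the partial sums: there is a constant $C_{0}$, independent of $T$, with
\begin{equation*}
  E \Big \| \sum_{t=a+1}^{b} \xi_{tT} \Big \|^{2} \le C_{0}(b-a), \qquad 0 \le a < b \le T .
\end{equation*}
Second, feed this quasi-orthogonality of the increments into a Hajek-Renyi maximal argument with weights $c_{k}:=1/(k b_{k})$. The generic constant $C$ in the statement absorbs $C_{0}$ together with the (bounded) contribution of the non-orthogonality of the mixing increments, so that no logarithmic factor appears.

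For the increment bound I would split $\xi_{tT}=X_{tT}\Sigma_{t,\mathcal{K}}^{-1}u_{t}$ according to the three blocks of $x_{tT}$. For the stationary block $z_{t}\otimes\eta_{t}$ and the bounded trend block $\varphi(t/T)\otimes\eta_{t}$, Davydov's covariance inequality combined with the strong-mixing size $-(4+\delta)/\delta$ and the uniform $L_{4+\delta}$ bound of Assumption A2 renders the autocovariances absolutely summable, which delivers $E\|\sum_{t=a+1}^{b}\cdot\|^{2}\le C(b-a)$; the eigenvalue bounds of Assumption A5 keep $\Sigma_{t,\mathcal{K}}^{-1}$ uniformly bounded, so the weighting is harmless. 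The choice $\xi_{tT}=\eta_{t}\eta_{t}'-I_{n}$ is treated identically, now via Assumptions A2(b) and A7. The delicate block is the integrated one, $T^{-1/2}w_{t}\otimes\eta_{t}$: here I would write $w_{t}=w_{a}+(w_{t}-w_{a})$ and split the sum. The leading piece $T^{-1/2}w_{a}\otimes\sum_{t=a+1}^{b}\eta_{t}$ is controlled by the Cauchy-Schwarz inequality together with the moment and orthogonality conditions of Assumptions A2(c) and A9 (using $E\|w_{a}\|^{4}=O(T^{2})$ and $E\|\sum_{t=a+1}^{b}\eta_{t}\|^{4}=O((b-a)^{2})$), giving a contribution of order $b-a$; the remainder $\sum_{t=a+1}^{b}T^{-1/2}(w_{t}-w_{a})\otimes\eta_{t}$ is tamed by the $T^{-1/2}$ normalization, since $w_{t}-w_{a}$ accumulates over a window of length at most $k_{2}\le T$, so that $\|T^{-1/2}(w_{t}-w_{a})\|=O_{p}(((t-a)/T)^{1/2})$ and this term contributes at most $O((b-a)^{2}/T)\le C(b-a)$.

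With the increment bound in hand, I would complete the proof by the Hajek-Renyi maximal argument. For the stationary and covariance components, the Strong Approximation Theorem (Lemma A.1(a)) replaces the partial sums by a Brownian motion with independent, hence orthogonal, increments, to which the classical Hajek-Renyi inequality applies; splitting the variance budget at $k_{1}$ (the term $\mathrm{var}(W(k_{1}))\,c_{k_{1}}^{2}\le C k_{1} c_{k_{1}}^{2}$ for the range up to $k_{1}$, and the incremental variances $\sum_{k_{1}<k\le k_{2}} c_{k}^{2}$ beyond it) produces exactly
\begin{equation*}
  \frac{C}{B^{2}} \Big ( k_{1} c_{k_{1}}^{2} + \sum_{k=k_{1}+1}^{k_{2}} c_{k}^{2} \Big )
  = \frac{C}{B^{2}} \Big ( \frac{1}{k_{1} b_{k_{1}}^{2}} + \sum_{k=k_{1}+1}^{k_{2}} \frac{1}{(k b_{k})^{2}} \Big ),
\end{equation*}
the strong-approximation error $O_{a.s.}(k^{1/2-\kappa})$ from Lemma A.1(a) being of smaller order and absorbed into $C$; the integrated block admits the same dyadic/maximal treatment once the window decomposition above is used. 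The main obstacle is precisely this integrated block: unlike Bai and Perron (1998) or Qu and Perron (2007), the partial sums are driven by a non-stationary regressor, and the linear-in-$(b-a)$ variance bound survives only because the $T^{-1/2}$ scaling caps the random-walk contribution over any admissible window of length at most $T$. A secondary technical point is that $c_{k}=1/(kb_{k})$ need not be monotone, which I would accommodate by running the maximal bound over dyadic blocks rather than through Abel summation.
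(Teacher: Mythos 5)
Your proposal diverges from the paper's proof in a way that leaves a genuine gap. The paper does not go through an increment-variance bound at all: it verifies that $x_{tT}\otimes\eta_{t}$ (and hence $X_{tT}\Sigma_{t,\mathcal{K}}^{-1}u_{t}$, after factoring out the uniformly bounded matrix $S'(I_{q}\otimes\Sigma_{t,\mathcal{K}}^{-1}(\Sigma^{0}_{t,\mathcal{K}})^{1/2})$) is an $L^{2}$-mixingale with mixingale numbers $\psi_{j}=j^{1/2}\alpha_{[j/2]}^{1/2-1/\phi}$ satisfying $\sum_{j}j^{1+\vartheta}\psi_{j}<\infty$, and then invokes the Hajek--Renyi inequality for $L^{2}$-mixingales (Lemma A6 of Bai and Perron, 1998). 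The hard work is the conditional-expectation bound $\Vert E_{t-j}(x_{tT}\otimes\eta_{t})-E(x_{tT}\otimes\eta_{t})\Vert_{2}\le C j^{1/2}\alpha_{[j/2]}^{1/2-1/\phi}$, obtained for the integrated block by telescoping $w_{t}$ over the conditioning window and applying Ibragimov's and Davydov-type mixing inequalities term by term. Your route replaces this with (i) a second-moment bound $E\Vert\sum_{t=a+1}^{b}\xi_{tT}\Vert^{2}\le C_{0}(b-a)$ and (ii) a generic maximal argument. The problem is that (i) alone is not enough: a Hajek--Renyi/Kolmogorov-type maximal inequality without a logarithmic loss requires structural information on the dependence (martingale, mixingale, or orthogonal increments), not just a linear variance growth. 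Your remark that the constant ``absorbs the contribution of the non-orthogonality of the mixing increments, so that no logarithmic factor appears'' is precisely the assertion that needs proof, and it is exactly what the mixingale verification supplies. The appeal to the Strong Approximation Theorem does not rescue this: the error $O_{a.s.}(t^{1/2-\kappa})$ carries a random (sample-path-dependent) constant that cannot be absorbed into the deterministic $C$ of a probability bound required to hold uniformly over all $B>0$ and all $k_{1}<k_{2}$; moreover Lemma A.1 is stated under Assumptions A2 \emph{and} A7, whereas the present lemma assumes only A1, A2 and A5.

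A second, related issue is that your argument for the integrated block leans on Assumptions A7 and A9, which are not hypotheses of the lemma; the lemma is invoked in Propositions A.1--A.2 and Theorem 1, which are proved under A1--A5 only, so a proof conditional on A7/A9 would not support those results. Even setting that aside, your bound for the remainder term $\sum_{t=a+1}^{b}T^{-1/2}(w_{t}-w_{a})\otimes\eta_{t}$ of order $(b-a)^{2}/T$ implicitly requires the cross terms $E[((w_{t}-w_{a})\otimes\eta_{t})'((w_{s}-w_{a})\otimes\eta_{s})]$ to be negligible, which again is a mixingale-type statement rather than a consequence of moment bounds alone. The fix is to follow the paper: prove the mixingale property directly (splitting into the cases $t<j$ and $t\ge j$ and telescoping $w_{t}$), and then cite the mixingale Hajek--Renyi inequality, which delivers the stated bound with the correct weights and no logarithmic factor.
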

\begin{proof}[\textbf{\textrm{Proof}}.]
  The assertion is proved if we show that 
  $\{X_{tT} \Sigma_{t, \mathcal{K}}^{-1} u_{t}\}$
  and 
  $\{
      \eta_{t}
      \eta_{t}'
      -
      I_{n}\} 
  $    
  satisfy
  the 
  $L^{2}$-mixingale condition 
  in Lemma A6 of Bai and Perron (1998),
  which 
  shows 
  the HajeK-Renyi inequality 
  for a $L^{2}$-mixingale sequence.\footnote{
    Lemma A6 of Bai and Perron (1998) obtains a Hajek-Renyi inequality
    with the the supremum taken over
    $[k_{1}, \infty]$
    rather than 
    the original one 
    with the the supremum taken over
    a finite range $[k_{1}, k_{2}]$ as in the assertion of this lemma. 
    Their argument, however, can easily be extended to cover the case considered here.
  }
  We consider only 
  $\{X_{tT} \Sigma_{t, \mathcal{K}}^{-1} u_{t}\}$
  because the proof 
  for 
  $\{
      \eta_{t}
      \eta_{t}'
      -
      I_{n}\} 
  $    
  is similar and actually simpler.
  We use the notation $E_{t}(\cdot):= E(\cdot|\mathcal{F}_{t})$
  for $t \in \mathbb{Z}$.

  We can write 
  $X_{tT} \Sigma_{t, \mathcal{K}}^{-1} u_{t} 
   = 
   S'
   (
   I_{q} \otimes \Sigma_{t, \mathcal{K}}^{-1} 
   (\Sigma_{t, \mathcal{K}}^{0} )^{1/2}
   )
   (x_{tT}\otimes \eta_{t})    
  $,
  where 
  $ \|
   S'
   (
   I_{q} \otimes \Sigma_{t, \mathcal{K}}^{-1} 
   (\Sigma_{t, \mathcal{K}}^{0} )^{1/2}
   )
   \|
   \le 
   C_{1}
  $
  from Assumption A5
  and 
  the term 
  $(x_{tT}\otimes \eta_{t})$
  is 
  $\mathcal{F}_{t}$-measurable.
  Thus, it suffices to show that
  there exist non-negative constants 
  $\{\psi_{j}\}_{j \ge 0}$
  such that,
  for all $t \ge 1$ and $j \ge 0$, 
  \begin{eqnarray}
    \label{eq:3}
    \big 
    \Vert  
    E_{t-j}
    \big (
      x_{tT}\otimes \eta_{t} 
    \big )
    - 
    E
    \big( 
       x_{tT}\otimes \eta_{t}
    \big )
    \big \Vert_{2} 
    \le 
    C_{2}\psi_{j},
  \end{eqnarray}
  as well as 
  $\psi_{j} \to 0$
  as $j \to \infty$
  and 
  $\sum_{j=1}^{\infty}j^{1+\vartheta} \psi_{j} < \infty$
  for some $\vartheta > 0$.

  In order to show 
  (\ref{eq:3}),
  we write 
  $
    x_{tT}\otimes \eta_{t}
    =
    \big [ 
      z_{t}' \otimes \eta_{t}', 
      \varphi(t/T)' \otimes \eta_{t}', 
      T^{-1/2} w_{t}' \otimes \eta_{t}' 
   \big ]'    
  $
  and 
  observe that
  $E[z_{t} \otimes \eta_{t}] = 0 $
  and 
  $E[\eta_{t}] = 0$. 
  It follows from 
  Minkowski's inequality that 
  \begin{eqnarray*}
    \big 
    \Vert  
    E_{t-j}
    (
      x_{tT}\otimes \eta_{t} 
    )
    - 
    E
    ( 
       x_{tT}\otimes \eta_{t}
    )
    \big \Vert_{2} 
    &\leq& 
    \big \Vert 
    E_{t-j}
    (z_{t}\otimes \eta_{t})
    \big \Vert_{2} 
    +
    \big \Vert 
    \varphi(t/T)
    \otimes
     E_{t-j}(\eta_{t})
    \big \Vert_{2} \\
    &&
    +
    T^{-1/2}
    \big \Vert 
    E_{t-j}
    ( 
      w_{t}\otimes \eta_{t} 
    )
    -
    E
    (
     w_{t}\otimes \eta_{t}
    )
    \big \Vert_{2}  \\
    &=:&
    A_{1} + A_{2} + A_{3}.
  \end{eqnarray*}
  For $A_{1}$ and $A_{2}$,
  an application of the mixing inequality of 
  \cite{Ibragimov1962SIAM}
  yields that\footnote{
    For $A_{2}$, 
    we use the fact 
    $\| \varphi(t/T) \otimes \eta_{t} \|_{2}^{2} 
     = E[(\varphi(t/T) \otimes \eta_{t})'(\varphi(t/T) \otimes \eta_{t})]
     = \varphi(t/T)'\varphi(t/T) E[\eta_{t}' \eta_{t}]
    $,
    which implies that 
    $
    \| \varphi(t/T) \otimes \eta_{t} \|_{2}
    \le C \|\eta_{t}\|_{2}
    $.
  }
  \begin{eqnarray}
    \label{eq:A-1}
    A_{1}
    \le 
    2(\sqrt{2} + 1)
    \alpha_{j}^{1/2 - 1/\phi}
    \| z_{t}\otimes \eta_{t} \|_{\phi}
    \ \ \ \mathrm{and} \ \ \
    A_{2}
    \le 
    2(\sqrt{2} + 1)
    \alpha_{j}^{1/2 - 1/\phi}
    \| \eta_{t} \|_{\phi},
  \end{eqnarray}
  where $ \phi:= 4 + \delta$  with
  $\delta$ defined in Assumption A2. 
  For the term $A_{3}$, we separately consider two cases:
  (i)   $t < j$
  and 
  (ii)  $t \ge j $,
  given $t \ge 1$.
  First, we consider case (i), i.e., $t - j < 0$.
  We have 
  $ w_{t}
    =
    w_{0}
    +
    \sum_{l=0}^{t-1}  u_{w, t-l} 
  $, 
  which with Minkowski's inequality implies that 
  \begin{eqnarray*}
    \sqrt{T}
    A_{3}
    \le 
    \big \|
     E_{t-j}(
     w_{0} \otimes \eta_{t}
     )
     - 
     E(
     w_{0} \otimes \eta_{t}
     )
    \big \|_{2} 
    +
    \sum_{l=0}^{t-1}
    \big \|
    E_{t-j}
    \big ( 
      u_{w, t-l} \otimes \eta_{t} 
    \big )
    -
    E
    \big ( 
    u_{w, t-l}\otimes \eta_{t}
    \big ) 
    \big \|_{2} .
  \end{eqnarray*}
  Since
  $
    \|
    E_{t-j}(V) - E(V)
    \|_{2} 
    \le 
    \|
    E_{t-j}(V)
    \|_{2} 
  $   
  for a random vector $V$,
  an application of Jensen's inequality 
  and Corollary 14.3 of \cite{Davidson1994Book}
  (a covariance inequality for a $\alpha $-mixing sequence)
  yields that 
  \begin{eqnarray}
    \label{eq:INEQ-1}
    \big \|
     E_{t-j}(
     w_{0} \otimes \eta_{t}
     )
     - 
     E(
     w_{0} \otimes \eta_{t}
     )
    \big \|_{2} 
    \le 
    \big \|
     w_{0} \otimes \eta_{t}
    \big \|_{2} 
    \le 
    C_{3}
    \alpha_{t}^{1/2 - 1/ \phi},
  \end{eqnarray}
  and that, for $0 \le l \le t-1$, 
  \begin{eqnarray}
    \label{eq:INEQ-2}
    \big \|
    E_{t-j}
    \big ( 
      u_{w, t-l} \otimes \eta_{t} 
    \big )
    -
    E
    \big ( 
    u_{w, t-l}\otimes \eta_{t}
    \big ) 
    \big \|_{2} 
    \le 
    \big \|
      u_{w, t-l} \otimes \eta_{t} 
    \big \|_{2} 
    \le 
    C_{4}
    \alpha_{l}^{1/2 - 1/ \phi}.
  \end{eqnarray}
  Also, using the mixing inequality of \cite{Ibragimov1962SIAM}, we can show that 
  \begin{eqnarray}
    \label{eq:INEQ-3}
    \big \|
     E_{t-j}(
     w_{0} \otimes \eta_{t}
     )
     - 
     E(
     w_{0} \otimes \eta_{t}
     )
    \big \|_{2} 
    \le 
    2(\sqrt{2}+1)
    \alpha_{j-t}^{1/2 - 1/ \phi}
    \big \|
     w_{0} \otimes \eta_{t}
    \big \|_{\phi},
  \end{eqnarray}
  and that, for $0 \le l \le t-1$,  
  \begin{eqnarray}
    \label{eq:INEQ-4}
    \big \|
    E_{t-j}
    \big ( 
      u_{w, t-l} \otimes \eta_{t} 
    \big )
    -
    E
    \big ( 
    u_{w, t-l}\otimes \eta_{t}
    \big ) 
    \big \|_{2} 
    \le 
    2(\sqrt{2}+1)
    \alpha_{j-l}^{1/2 - 1/ \phi}
    \big \|
     u_{w,t-l} \otimes \eta_{t}
    \big \|_{\phi},
  \end{eqnarray}
  where both moments on the right-hand side of 
  (\ref{eq:INEQ-3}) and  (\ref{eq:INEQ-4})
  are bounded from Assumption A2.
  It follows from 
  (\ref{eq:INEQ-1})-(\ref{eq:INEQ-4}) that, when $t < j$, we have
  \begin{eqnarray}
    \label{eq:case-i}
    A_{3}
    \le 
    C_{5}
    T^{-1/2}
    \sum_{l=0}^{t}
    \min\{
    \alpha_{l}^{1/2 - 1/ \phi}, \alpha_{j -l}^{1/2 - 1/ \phi}
    \}
    \le 
    C_{5}
    j^{1/2}
    \alpha_{[j/2]}^{1/2 - 1/ \phi},
  \end{eqnarray}
  where 
  the last inequality is due to the fact that 
  $\min\{
    \alpha_{l}^{1/2 - 1/ \phi}, \alpha_{j -l}^{1/2 - 1/ \phi}
    \}
    \le 
    \alpha_{[j/2]}^{1/2 - 1/ \phi}
  $
  for every 
  $0 \le l \le t$
  and 
  that 
  $T^{-1/2} t \le t^{1/2} \le j^{1/2}$
  for $t < j$.

  Next, we consider case (ii), i.e., $0 \le t - j$.
  Since 
  $ w_{t}
    =
    w_{t-j}
    +
    \sum_{l=0}^{j-1}  u_{w, t-l} 
  $, 
  Minkowski's inequality leads to 
  \begin{eqnarray}
    \label{eq:BB1}
    \sqrt{T}
    A_{3}
    \le 
    \big \|
      w_{t-j} 
      \otimes 
      E_{t-j}(\eta_{t})
    \big \|_{2} 
    +
    \sum_{l=0}^{j-1}
    \big \|
    E_{t-j}
    \big ( 
      u_{w, t-l} \otimes \eta_{t} 
    \big )
    -
    E 
    \big ( 
    u_{w, t-l}\otimes \eta_{t}
    \big ) 
    \big \|_{2} .
  \end{eqnarray}
  Using the Cauchy-Schwarz 
  and 
  Ibragimov's mixing inequalities,
  we can show that 
  \begin{eqnarray}
    \label{eq:CC-1}
    \big \|
      w_{t-j}
      \otimes 
      E_{t-j}(\eta_{t}) 
    \big \|_{2} 
    \le 
    \big \|
    w_{t-j}
    \big \|_{2}
    \big \|
    E_{t-j}(\eta_{t}) 
    \big \|_{2} 
    \le 
    \big \|
    w_{t-j}
    \big \|_{2}
    C_{6}  
    \alpha_{j}^{1/2 - 1/\phi}.
  \end{eqnarray}
  Furthermore, we can write 
  $
   \| w_{t-j} \|_{2}^2 
   = 
   \sum_{s=1}^{t-j} 
   E[ u_{w s}' u_{w s} ] 
   +
   2
   \sum_{k=1}^{t-j - 1} 
   \sum_{s=1}^{t-j-k} 
   E[ u_{w s}' u_{w, s+ k } ]
   $,
   which with 
   Corollary 14.3 of \cite{Davidson1994Book}
   implies 
   \begin{eqnarray*}
     T^{-1} \| w_{t-j} \|_{2}^{2}
     \le 
     C_{7} 
     \Big (
     \frac{t- j}{T}
     + 
     \sum_{k=1}^{t-j - 1} 
     \frac{t-j-k}{T}
     \alpha_{k}^{1/2 - 1/\phi}
     \Big )
     \le 
     C_{8}. 
   \end{eqnarray*}
   Also, applying the same arguments used in case (i), 
   we can show that 
  \begin{eqnarray}
    \label{eq:AA2}
    \sum_{l=0}^{j-1}
    \big \|
    E_{t-j}   
    ( 
      u_{w, t-l} \otimes \eta_{t} 
    )
      -
    E( 
       u_{w, t-l}\otimes \eta_{t}
    ) 
    \big \|_{2}   
    \le 
    C_{9} 
    \sum_{l=0}^{j-1}
    \min\{
    \alpha_{l}^{1/2 - 1/\phi},
    \alpha_{j-l}^{1/2 - 1/\phi}
    \}.
  \end{eqnarray}
  Combining the results in (\ref{eq:CC-1})-(\ref{eq:AA2}),
  we obtain 
  \begin{eqnarray*}
    A_{3}
    \le 
    C_{10} 
    \big (
    \alpha_{j}^{1/2 - 1/\phi}
    +
    T^{-1/2}
    j
    \alpha_{[j/2]}^{1/2 - 1/\phi}
    \big )
    \le 
    C_{11} 
    j^{1/2}
    \alpha_{[j/2]}^{1/2 - 1/\phi}.
  \end{eqnarray*}
  Thus, 
  from the above equation and (\ref{eq:case-i}),
  we obtain that 
  $A_{3} \le 
    C_{12} 
    j^{1/2}
    \alpha_{[j/2]}^{1/2 - 1/\phi}
  $
  for every $t \ge 1$. 
  This result together with
  (\ref{eq:A-1})   and (\ref{eq:BB1}) 
  yields 
  \begin{eqnarray*}
    \big 
    \|
    E_{t-j}
    (
      x_{tT}\otimes \eta_{t} 
    )
    - 
    E
    ( 
       x_{tT}\otimes \eta_{t}
    )
    \big
    \|_{2}
    \le 
    C_{13} j^{1/2} \alpha_{[j/2]}^{1/2 - 1/\phi}.
  \end{eqnarray*}

  We set 
  $\psi_{j}=j^{1/2} \alpha_{[j/2]}^{1/2 - 1/\phi}$
  and it remains to show that 
  $
  \sum_{j=1}^{\infty} j^{1+\vartheta}\psi_{j}
  < \infty
  $
  for some $\vartheta>0$.
  Observe that 
  $\alpha_{[j/2]}^{1/2 - 1/\phi} = O(j^{\frac{5}{2} - \frac{1 - 2 \delta}{\delta}})$ 
  under Assumption A2.
  Thus,  
  for  
  $
  \vartheta 
  < (1 - 2 \delta) / \delta
  $,
  we can show that 
  $
  \sum_{j=1}^{\infty} j^{1+\vartheta}\psi_{j}
  \le 
  C_{14}
  \sum_{j=1}^{\infty} j^{-1 -\frac{1- 2 \delta}{\delta} + \vartheta }
  < \infty
  $.
  This completes the proof.
\end{proof}
\vspace{0.1cm}



 
In what follows, we shall use 
a collection of sub-intervals 
$\{[\tau_{l-1}+1, \tau_{l}]\}_{l=1}^{N}$
with 
$\tau_{0} =0$
and 
$\tau_{N} =T$
as a partition of the interval $[1,T]$
according to sets of break dates $\mathcal{K}$ and $\mathcal{K}^{0}$,
such that 
both the true basic parameters and their estimates are constant within each 
sub-interval 
and 
$N$ is set to be the smallest number of such sub-intervals;
that is, 
$
( 
\beta_{t, \mathcal{K}},
\beta_{t, \mathcal{K}^{0}}^{0},
\Sigma_{t,\mathcal{K}},
\Sigma_{t,\mathcal{K}^{0}}^{0}
)
=
(
\beta_{\tau_{l}, \mathcal{K}},
\beta _{\tau_{l}, \mathcal{K}^{0}}^{0},
\Sigma_{\tau_{l}, \mathcal{K}},
\Sigma_{\tau_{l},\mathcal{K}^{0}}^{0}
)$
for $\tau_{l-1} +1 \le t \le \tau_{l}$.
For each parameter group $g \in \{1, \dots, G\}$,
we similarly consider 
a collection $\{[\tau_{g, l-1}+1, \tau_{g l}]\}_{l=1}^{N_{g}}$
with  
$\tau_{0} =0$
and 
$\tau_{N_{g}} =T$
as 
a partition of the interval $[1, T]$
given $\mathcal{K}_{g}$ and $\mathcal{K}_{g}^{0}$,
where 
both the true basic parameters and their estimates for the $g^{th}$ group
are constant 
within each sub-interval 
and 
$N_{g}$ is the smallest number of such intervals.
Thus we have 
$
( 
\beta_{g, t, \mathcal{K}},
\beta_{g, t, \mathcal{K}}^{0}
)
=
(
\beta_{g, \tau_{gl}, \mathcal{K}},
\beta _{g, \tau_{g l}, \mathcal{K}^{0}}^{0}
)$
for $\tau_{g, l-1}+1 \le t \le \tau_{g l}$
and 
$
(
\Sigma_{t,\mathcal{K}},
\Sigma_{t,\mathcal{K}^{0}}^{0}
)
=
( 
\Sigma_{\tau_{G,l}, \mathcal{K}},
\Sigma_{\tau_{G, l}, \mathcal{K}^{0}}^{0}
)
$
for $\tau_{G, l-1} +1 \le t \le \tau_{G, l}$,
whereas 
the basic parameters of the other groups may change. 
For  $\tau_{G, l-1} +1 \le t \le \tau_{G, l}$
with $l \in \{ 1, \dots, N_{g} \}$,
we define 
\begin{eqnarray}
  \label{eq:var-0}
  \Psi_{l} 
  := 
  (\Sigma _{t, \mathcal{K}^{0}}^{0})^{- 1 / 2}
  (\Sigma_{t, \mathcal{K}} -\Sigma_{t, \mathcal{K}^{0}}^{0})
  (\Sigma _{t, \mathcal{K}^{0}}^{0})^{- 1  / 2},
\end{eqnarray}
where we have 
$
  I_{n} + \Psi_{l}
  =
  (\Sigma _{\tau_{G,l}, \mathcal{K}^{0}}^{0})^{-1 / 2}
  \Sigma_{\tau_{G,l}, \mathcal{K}} 
  (\Sigma _{\tau_{G,l}, \mathcal{K}^{0}}^{0})^{-1 / 2}
$.
Since $\Psi_{l} $ is an $n \times n$ symmetric matrix, there exits an
orthogonal matrix $U$ such that 
\begin{eqnarray*}
  U\Psi U^{\prime }
  =\mathrm{diag}\{
  \lambda_{l1}^{\Psi},...,\lambda _{ln}^{\Psi}\}  
  \ \ \ \ \mathrm{and} \ \ \ \ 
  U(I_{n} + \Psi )U^{\prime }
  =\mathrm{diag}\{
  1 + \lambda_{l1}^{\Psi},...,1 + \lambda _{ln}^{\Psi}\},  
\end{eqnarray*}
where 
$\lambda _{l1}^{\Psi}, \dots, \lambda_{ln}^{\Psi}$ are the eigenvalues of $\Psi_{l}$. 

In the lemma below, we shall obtain an upper bound for the normalized 
log likelihood based on sub-intervals. 
As a short-hand notation, we define, 
for $1 \le t \le T$ and $1 \le g \le G$,
\begin{eqnarray*}
  \Delta \beta_{t, \mathcal{K}}
  :=
  \beta_{t, \mathcal{K}}
  -
  \beta _{t, \mathcal{K}^{0}}^{0}  
  \ \ \ \mathrm{and} \ \ \ 
  \Delta \beta_{g, t, \mathcal{K}}
  :=
  \beta_{g, t, \mathcal{K}}
  -
  \beta _{g, t, \mathcal{K}^{0}}^{0}  .
\end{eqnarray*}



\vspace{0.1cm}
\begin{lemma}
  \label{lemma:decomposition-l}
  Suppose that Assumptions A1-A5 hold. Then,
  \begin{eqnarray*}
    \ell_{T}(\mathcal{K},\theta)
    \le 
    C 
    \bigg \{
    \sum_{g=1}^{G}
    \sum_{l=1}^{N_{g}}
    \bar{\ell}_{g,l}(\mathcal{K},\theta) 
    +
    \sum_{l=1}^{N_{G}}
    \bar{\ell}_{G+1,l}(\mathcal{K},\theta) 
    +
    \Delta_{T}(\mathcal{K}, \theta)
    \bigg \},
  \end{eqnarray*} 
  where, for $g=1,\dots, G$ and $l=1,\dots,N_{g}$,
  \begin{eqnarray*}
    \bar{\ell}_{g, l}(\mathcal{K},\theta)
    &:=&
    \bigg (
    \bigg  \| 
    \sum_{t=\tau_{g, l-1}+1}^{\tau_{g l}}
    X_{tT}\Sigma_{t, \mathcal{K}}^{-1}u_{t}
    \bigg  \| 
    -
    (\tau_{g l} - \tau_{g, l-1})
    \big \|
    \Delta \beta_{g, \tau_{gl}, \mathcal{K}} 
    \big \|
    \bigg )
    \big \|
    \Delta 
    \beta_{g, \tau_{gl}, \mathcal{K}}
    \big \|,\\
    \bar{\ell}_{G+1,l}(\mathcal{K},\theta)
    &:=&
    \sum_{i=1}^{n}
    \bigg (
    \bigg \|
    \sum_{t=\tau_{G, l-1}+1}^{\tau_{G l}}(\eta _{t}\eta
    _{t}^{\prime }-I_{n})
    \bigg \|
    -
    (\tau_{G l} - \tau_{G, l-1})
    |\lambda_{il}^{\Psi}|
    \bigg )  
    |\lambda_{il}^{\Psi}|, \\
    \Delta_{T}(\mathcal{K}, \theta)
    &:=&
    \max_{1 \le t \le T}
    \| 
    \Delta \beta_{t, \mathcal{K}}
    \|.
  \end{eqnarray*}
\end{lemma}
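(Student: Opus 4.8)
The plan is to establish the bound pathwise (the statement is purely algebraic given the data), by expanding the normalized log-likelihood observation by observation and regrouping the summands according to the three partitions already introduced: the finest partition $\{\tau_l\}$, each group partition $\{\tau_{gl}\}$, and the covariance partition $\{\tau_{Gl}\}$. Substituting $y_t = X_{tT}'\beta_{t,\mathcal{K}^0}^0 + u_t$ with $u_t = (\Sigma_{t,\mathcal{K}^0}^0)^{1/2}\eta_t$, each term of $\ell_T$ decomposes into a covariance block and a mean block,
\begin{eqnarray*}
\ell_T(\mathcal{K},\theta) = \sum_{t=1}^T \Big\{ \tfrac{1}{2}\big[ \eta_t'\eta_t - \eta_t'(I_n+\Psi)^{-1}\eta_t - \log|I_n+\Psi| \big] + u_t'\Sigma_{t,\mathcal{K}}^{-1}X_{tT}'\Delta\beta_{t,\mathcal{K}} - \tfrac{1}{2}\Delta\beta_{t,\mathcal{K}}'X_{tT}\Sigma_{t,\mathcal{K}}^{-1}X_{tT}'\Delta\beta_{t,\mathcal{K}} \Big\},
\end{eqnarray*}
where $\Psi=\Psi_l$ on the covariance sub-interval containing $t$ is the reparametrization in (\ref{eq:var-0}) and $\log|I_n+\Psi| = \sum_i \log(1+\lambda_{il}^\Psi)$. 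No probabilistic input (SAT, FCLT, Hajek--Renyi) enters here; only the eigenvalue bounds of A5 and the local non-collinearity of A1 are used.

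For the covariance block I would work on each group-$G$ sub-interval $[\tau_{G,l-1}+1,\tau_{Gl}]$, on which $\Sigma_{t,\mathcal{K}}$ and $\Sigma_{t,\mathcal{K}^0}^0$, hence $\Psi_l$ and its spectrum, are constant. Diagonalizing $\Psi_l = U_l'\,\mathrm{diag}(\lambda_{1l}^\Psi,\dots,\lambda_{nl}^\Psi)\,U_l$ and passing to the rotated innovations $\tilde\eta_t := U_l\eta_t$, the block equals $\tfrac{1}{2}\sum_i\{ \lambda_{il}^\Psi(1+\lambda_{il}^\Psi)^{-1}\tilde\eta_{ti}^2 - \log(1+\lambda_{il}^\Psi)\}$. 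Summing over $t$ and inserting $\sum_t \tilde\eta_{ti}^2 = (\tau_{Gl}-\tau_{G,l-1}) + \sum_t(\tilde\eta_{ti}^2-1)$ isolates a deterministic length term, controlled by the elementary inequality $x/(1+x)-\log(1+x)\le 0$ refined to $\le -c(\lambda_{il}^\Psi)^2$ on the bounded eigenvalue range furnished by A5, from a fluctuation term bounded by $|\sum_t(\tilde\eta_{ti}^2-1)| \le \|\sum_t(\eta_t\eta_t'-I_n)\|$ (since $U_l$ is orthogonal). Collecting these reproduces, up to a generic constant, the term $\bar\ell_{G+1,l}$.

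For the mean block I would use the orthogonal coordinate decomposition $\Delta\beta_{t,\mathcal{K}} = \sum_{g=1}^G \Delta\beta_{g,t,\mathcal{K}}$, which gives $\|\Delta\beta_{t,\mathcal{K}}\|^2 = \sum_g\|\Delta\beta_{g,t,\mathcal{K}}\|^2$ and keeps $\Delta\beta_{g,t,\mathcal{K}} = \Delta\beta_{g,\tau_{gl},\mathcal{K}}$ constant on each group-$g$ sub-interval. The linear term then regroups as $\sum_{g,l} \Delta\beta_{g,\tau_{gl},\mathcal{K}}' \sum_{t=\tau_{g,l-1}+1}^{\tau_{gl}} X_{tT}\Sigma_{t,\mathcal{K}}^{-1}u_t$, and Cauchy--Schwarz delivers the score factor $\|\sum_t X_{tT}\Sigma_{t,\mathcal{K}}^{-1}u_t\|\,\|\Delta\beta_{g,\tau_{gl},\mathcal{K}}\|$ of each $\bar\ell_{g,l}$. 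For the quadratic term I would bound it from below: since $\Sigma_{t,\mathcal{K}}^{-1} \succeq c_3^{-1} I_n$ by A5 and, on any finest sub-interval longer than $k_0$, $\sum_t X_{tT} X_{tT}' \succeq \kappa_0(\tau_l-\tau_{l-1})\,S'S$ by A1 with $S$ of full column rank, one gets $\sum_t \Delta\beta_{t,\mathcal{K}}'X_{tT}\Sigma_{t,\mathcal{K}}^{-1}X_{tT}'\Delta\beta_{t,\mathcal{K}} \ge c\sum_{g}\sum_l (\tau_{gl}-\tau_{g,l-1})\|\Delta\beta_{g,\tau_{gl},\mathcal{K}}\|^2$ after re-summing the finest intervals nested inside each group-$g$ interval; this supplies the penalty factor of $\bar\ell_{g,l}$.

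The hard part will be the bookkeeping that ties the three partitions together with the fact that A1 controls eigenvalues only on sub-intervals longer than the fixed constant $k_0$. Because the number of break dates is fixed, there are only finitely many finest sub-intervals, so the at-most-$k_0$ observations in each short interval together with the regime-mismatch at partition boundaries contribute an aggregate of order one; these I would collect into the additive remainder $\Delta_T(\mathcal{K},\theta)=\max_t\|\Delta\beta_{t,\mathcal{K}}\|$, using that the coefficients and covariances are uniformly bounded by A5. The remaining delicate point is to line up the separate constants multiplying the score and penalty factors in both blocks; since the statement carries a single generic constant $C$ out front and each derived block already has exactly the form $(\text{score}-\text{length}\times\text{magnitude})\times\text{magnitude}$ of $\bar\ell_{g,l}$ and $\bar\ell_{G+1,l}$, the plan is to choose $C$ large enough to dominate the fixed constants produced by A1 and A5, which completes the bound.
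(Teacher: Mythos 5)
Your proposal follows essentially the same route as the paper's proof: the same three-way split of $\ell_T$ into the covariance block and the linear and quadratic mean blocks, the same diagonalization of $\Psi_l$ combined with the elementary inequality $a/(1+a)-\log(1+a)\le -ca^2$ on the bounded eigenvalue range from A5, the same use of the orthogonality $\|\Delta\beta_{t,\mathcal{K}}\|^2=\sum_g\|\Delta\beta_{g,t,\mathcal{K}}\|^2$ to regroup the quadratic term by group partitions, and the same device of absorbing the sub-$k_0$ intervals into the $\Delta_T(\mathcal{K},\theta)$ remainder. The argument is correct as outlined.
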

\begin{proof}[\textbf{Proof}.]
  We can write 
  $
    \log f(y_{t}|X_{tT}, \theta_{t, \mathcal{K}})
    =
    - 
    (1/2)
    \big (
    \log (2\pi )^{n}
    +
    \log |\Sigma_{t, \mathcal{K}} |
    + 
    \| 
    \Sigma_{t, \mathcal{K}} ^{-1/2}
    (
     u_{t}-X_{tT}^{\prime } \Delta \beta_{t,\mathcal{K}}
    )
    \|^{2}
    \big )
  $,
  which implies that 
  \begin{eqnarray*}
    \ell_{T}(\mathcal{K},\theta)
    &=&
    -\frac{1}{2}
    \sum_{t=1}^{T}
    \big (
    \log 
    \big | \Sigma_{t, \mathcal{K}} \big|
    -
    \log 
    \big | \Sigma_{t, \mathcal{K}^{0}}^{0} \big|
    +
    \big \| 
    \Sigma_{t, \mathcal{K}}^{-1/2}u_{t}
    \big \|^{2}
    -
    \big \| 
    (\Sigma _{t, \mathcal{K}^{0}}^{0})^{-1/2} u_{t}   
    \big \| ^{2}  
    \big ) \\
    &&  
    +
    \sum_{t=1}^{T}
    \Delta \beta_{t, \mathcal{K}}^{\prime }
    X_{tT}\Sigma_{t, \mathcal{K}}^{-1}u_{t} 
    -
    \frac{1}{2}
    \sum_{t=1}^{T}
    \big \|
    \Sigma_{t, \mathcal{K}}^{-1/2}
    X_{tT}^{\prime }
    \Delta \beta_{t, \mathcal{K}}
    \big \|^{2} \\
    &=:&   
    A_{1}
    +
    A_{2}
    +
    A_{3}.
  \end{eqnarray*}

  For the term $A_{1}$,
  we write 
  $ 
    \log 
    \big | \Sigma_{t, \mathcal{K}} \big|
    -
    \log 
    \big | \Sigma_{t, \mathcal{K}^{0}}^{0} \big|
    =
    \log 
    \big | 
    (\Sigma _{t, \mathcal{K}^{0}}^{0})^{-1/2}
    \Sigma_{t, \mathcal{K}} 
    (\Sigma _{t, \mathcal{K}^{0}}^{0})^{-1/2}
    \big|
  $
  and also
  $u_{t}= (\Sigma _{t, \mathcal{K}^{0}}^{0})^{1/2} \eta_{t}$.
  Since $A_{1}$ depends only on 
  $\mathcal{K}_{G}$
  and 
  $\mathcal{K}_{G}^{0}$,
  we have 
  \begin{eqnarray*}
    A_{1}
    = 
    \sum_{l=1}^{N_{G}}
    \bigg \{
    -\frac{1}{2}
    \sum_{t=\tau_{G,l-1}+1}^{\tau_{Gl}}
    \Big (
    \log 
    \big | 
    I_{n} + \Psi_{l}
    \big|
    +
    \mathrm{tr}
    \big (
    (I_{n} + \Psi_{l})^{-1}
    \eta_{t}
    \eta_{t}'
    \big )
    -
    \mathrm{tr}
    \big (
    \eta_{t}
    \eta_{t}'
    \big )
    \Big )
    \bigg \} 
    =:
    \sum_{l=1}^{N_{G}}
    A_{1,l}.
  \end{eqnarray*}
  For every $l= 1, \dots, N_{G}$,
  we have that 
  $
    \log 
    \big |
    I_{n} +\Psi_{l} 
    \big |
    = 
    \sum_{i=1}^{n} \log (1 + \lambda_{l i}^{\Psi})
  $
  and that 
  \begin{eqnarray*}
    \mathrm{tr}
    \big (
    (I_{n} + \Psi_{l})^{-1} \eta_{t}\eta_{t}' 
    \big )
    =
    \mathrm{tr}
    \bigg (
    \mathrm{diag}
    \left( 
      \left \{
        \frac{1}{1+\lambda _{li}^{\Psi}}
      \right \}_{i=1}^n 
      \right) 
        U^{\prime }
        \big (
        \eta _{t}\eta _{t}^{\prime}
        \big )
        U 
        \bigg ),
  \end{eqnarray*}
  which leads to 
  \begin{eqnarray*}
    A_{1,l}
    =
    -
    \frac{
      \tau_{Gl}-\tau_{G, l-1}
    }{
      2
    }
    \sum_{i=1}^{n} 
    \log (1+\lambda _{li}^{\Psi})
    +
    \frac{1}{2}
    \mathrm{tr} 
    \bigg (
      \mathrm{diag}\left( \left \{ 
          \frac{\lambda _{li}^{\Psi}}{1+\lambda _{li}^{\Psi}}
        \right \}_{i=1}^n \right)
      U^{\prime }
      \bigg (
      \sum_{t=\tau_{G, l-1}+1}^{\tau_{Gl}}
      \eta _{t}\eta _{t}^{\prime}
      \bigg )
      U 
    \bigg ).
  \end{eqnarray*}
  We can show that  
  $- \log (1+a)
   + 
   a/ (1+a)
   \le  
   -
   a^2
   /
   (1 + a)
  $
  for $0 < a < \infty$
  \citep[see][for instance]{Dragomir2016}.
  Thus,
  \begin{eqnarray*}
    A_{1,l}
    \le
    -
    \frac{
      \tau_{Gl}-\tau_{G, l-1}
    }{
      2
    }
    \sum_{i=1}^{n} 
    \frac{
      |\lambda_{i}^{\Psi}|^2
    }{
      1 + \lambda_{li}^{\Psi}
    }
    +
    \frac{1}{2}
    \mathrm{tr} 
    \bigg (
    \mathrm{diag}
    \left(
      \left \{ 
        \frac{
          \lambda _{li}^{\Psi}
        }{
          1+\lambda _{li}^{\Psi}
        }
      \right \}_{i=1}^n 
    \right)
    U'
    \bigg (
    \sum_{t= \tau_{G, l-1}+1}^{\tau_{Gl}}
    ( \eta _{t} \eta _{t}' - I_{n} )  
    \bigg )
    U 
    \bigg ).
  \end{eqnarray*}
  Since 
  the maximum of 
  the diagonal elements of 
  $U^{\prime} \big ( \sum_{t=\tau_{G,l-1}+1}^{\tau_{Gl}}(\eta _{t}\eta _{t}^{\prime}-I_{n}) \big )U$
  is bounded from above by 
  $\| 
   U^{\prime }
   \big (
   \sum_{t=\tau_{G,l-1}+1}^{\tau_{Gl}}(\eta _{t}\eta _{t}^{\prime}-I_{n})  
   \big )
  U
  \|
  $ with $\| U\| = 1$,    
  we have 
  \begin{eqnarray}
    \label{eq:l-G-l}
    A_{1,l}
    \leq 
    \frac{1}{2}
    \sum_{i=1}^{n}
    \bigg \{
    -
    (\tau_{Gl}-\tau_{G, l-1})
    \frac{
      |\lambda_{li}^{\Psi}|^2
    }{
      1 + \lambda_{li}^{\Psi}
    }
    +
    \frac{
      |\lambda _{li}^{\Psi}|
    }{
      1+\lambda _{li}^{\Psi}
    }
    \bigg \|
    \sum_{t=\tau_{G,l-1}+1}^{\tau_{Gl}}(\eta _{t}\eta _{t}^{\prime}-I_{n})
    \bigg \|
    \bigg \}. 
  \end{eqnarray}
  From the compactness of $\Theta$
  and (\ref{eq:var-0}),
  we have 
  $\max_{1 \le i \le n}(1+ \lambda_{li}^{\Psi} )= \|I_{n}+\Psi_{l} \| \le C_{1}$
  and 
  \begin{eqnarray*}
    1+ \min_{1 \le i \le n} \lambda_{l i}^{\Psi}
    = 
    \min_{a \in \mathbb{R}^n}
    \frac{
      a' (I_{n} + \Psi_{l} ) a
    }{ 
      a'a
    }
    \ge 
    \bigg (
    \min_{b \in \mathbb{R}^n}
    \frac{
      b' 
      \Sigma_{\tau_{Gl}, \mathcal{K}} 
      b
    }{ 
      b'b
    }
    \bigg )
    \times
    \bigg (
    \min_{a \in \mathbb{R}^n}
    \frac{
      a' 
      (\Sigma _{\tau_{Gl}, \mathcal{K}^{0}}^{0})^{-1}
      a
    }{ 
      a'a
    } 
    \bigg )
    \ge C_{2}.
  \end{eqnarray*}
  Thus we have that 
  $ C_{2} \le  1+\lambda_{li}^{\Psi} \le C_{1}$
  for all $i = 1, \dots, n$.
  This together with (\ref{eq:l-G-l}) yields 
  \begin{eqnarray*}
    A_{1,l}
    \leq 
    C_{3}
    \sum_{i=1}^{n}
    \bigg \{
    -
    (\tau_{Gl}-\tau_{G, l-1})
    |\lambda_{li}^{\Psi}|^2
    +
    |\lambda _{li}^{\Psi}|
    \bigg \|
    \sum_{t=\tau_{G,l-1}+1}^{\tau_{Gl}}(\eta _{t}\eta _{t}^{\prime}-I_{n})
    \bigg \|
    \bigg \}. 
  \end{eqnarray*}
  It follows that 
  $A_{1} 
   \le 
   C_{4}
   \sum_{l=1}^{N_{G}}
   \bar{\ell}_{G+1,l}(\mathcal{K},\theta) 
  $.

  We now consider $A_{2}$ and $A_{3}$.
  Note that 
  $  
  \Delta \beta_{t, \mathcal{K}}
  = 
  \sum_{g=1}^{G}
  \Delta 
  \beta_{g, t, \mathcal{K}}
  $,
  and 
  \begin{eqnarray}
    \label{eq:I-1}
    A_{2}
    =
    \sum_{g=1}^{G}
    \sum_{t=1}^{T}
    \Delta 
    \beta_{g, t, \mathcal{K}}'
    X_{tT}\Sigma_{t, \mathcal{K}}^{-1}u_{t}.
  \end{eqnarray}
  Also, given 
  $
  X_{tT}\Sigma_{t, \mathcal{K}}^{-1}X_{tT}^{\prime }
  = S' (x_{tT}x_{tT}^{\prime }\otimes \Sigma_{\tau_{l}, \mathcal{K}}^{-1} )S
  $
  for $\tau_{l-1} +1 \le t \le \tau_{l}$,
  we can show that 
  \begin{eqnarray*}
    A_{3}
    =
    \sum_{l=1}^{N}
    \bigg \{
    -
    \frac{1}{2}
    \bigg \|
    \bigg (
    \sum_{t=\tau_{l-1}+1}^{\tau_{l}}x_{tT}x_{tT}^{\prime}
    \otimes 
    \Sigma_{\tau_{l}, \mathcal{K}}^{-1}
    \bigg )^{1/2}
    S
    \Delta 
    \beta_{\tau_{l}, \mathcal{K}}
    \bigg \|^{2}
    \bigg \}
    =:
    \sum_{l=1}^{N}
    A_{3,l}.
  \end{eqnarray*}
  Under Assumption A1,
  there exists a finite integer $k_{0}$ such that 
  the minimum eigenvalue of 
  $
  (\tau_{l} - \tau_{l-1})^{-1}
  \sum_{t=\tau_{l-1}+1}^{\tau_{l}}x_{tT}x_{tT}^{\prime} 
  $ 
  is strictly positive 
  for every $(\tau_{l}- \tau_{l-1})\ge k_{0}$
  and 
  also the eigenvalues of 
  $\Sigma_{\tau_{l},\mathcal{K}}$
  take finite positive values
  in ${\Theta}$ from Assumption A5.
  Thus, 
  an application of the result that 
  $\min_{1 \le i \le n}\lambda_{i}(A) \| b\|^2\leq b^{\prime }Ab
  \leq \max_{1 \le i\le n}\lambda _{i}(A) \|b\|^2$
  for an $n \times 1$ vector $b$
  and 
  an $n \times n$ symmetric matrix $A$ with eigenvalues $\{\lambda _{i}(A)\}_{i=1}^{n}$
  yields that, when $\tau_{l}- \tau_{l-1} \ge k_{0}$,
  \begin{eqnarray}
    \label{eq:I3-1}
    A_{3,l}
    \le 
    - 
    C_{5}
    (\tau_{l}- \tau_{l-1})
    \|
    S
    \Delta
    \beta_{\tau_{l}, \mathcal{K}} 
    \|^2
    \le 
    - 
    C_{6}
    (\tau_{l}- \tau_{l-1})
    \|
    \Delta
    \beta_{\tau_{l}, \mathcal{K}} 
    \|^2,
  \end{eqnarray}
  where 
  the last inequality is due to the fact that 
  $S'S$ is positive definite.\footnote{
    The selection matrix $S$ is 
    of dimension $nq \times p$
    with full column rank
    and 
    thus 
    $S v \not = 0$
    for all $v \in \mathbb{R}^p$
    with $v \not = 0$.
    It follows that 
    $v' S'S v \not = 0$
    for all $v \in \mathbb{R}^p$
    with $v \not = 0$
    and 
    $S'S$ positive definite.
    This implies that 
    there exists a constant $c>0$
    such that 
    $\|S b \|  \ge c \| b\|$
    for any $b \in \mathbb{R}^p$.
  }
  When $\tau_{l}- \tau_{l-1} < k_{0}$, we have that 
  $ (\tau_{l}- \tau_{l-1}) 
    \|
    \Delta \beta_{\tau_{l}, \mathcal{K}}
    \|^2
    \le 
    C_{7}
    \|
    \Delta \beta_{\tau_{l}, \mathcal{K}}
    \|^2
  $, which yields 
  \begin{eqnarray}
    \label{eq:I3-2}
    A_{3,l}
    \le 
    0 
    \le 
    -
    C_{8}
    (\tau_{l}- \tau_{l-1}) 
    \|
    \Delta \beta_{\tau_{l}, \mathcal{K}}
    \|^2
    +
    C_{9}
    \|
    \Delta \beta_{\tau_{l}, \mathcal{K}}
    \|^2.
  \end{eqnarray}
  It follows from 
  (\ref{eq:I3-1})
  and
  (\ref{eq:I3-2})
  that
  $
    A_{3}
    \le 
    -
    C_{10}
    \sum_{l=1}^{N}
    (\tau_{l}- \tau_{l-1}) 
    \big \|
    \beta_{\tau_{l}, \mathcal{K}} -\beta _{\tau_{l}, \mathcal{K}^{0}}^{0}
    \big \|^2
    +
    C_{11}
    \Delta_{T}(\mathcal{K}, \theta)
  $.
  Also,
  we can show that 
  $
    \sum_{l=1}^{N}
    (\tau_{l}- \tau_{l-1}) 
    \big \|
    \Delta \beta_{\tau_{l}, \mathcal{K}} 
    \big \|^2
    =
    \sum_{t=1}^{T}
    \big \|
    \Delta \beta_{t, \mathcal{K}}
    \big \|^2
  $
  and that
  $
  \big \|
    \Delta \beta_{t, \mathcal{K}} 
  \big \|^2
  = 
  \sum_{g=1}^{G}
  \big \|
    \Delta 
    \beta_{g, t, \mathcal{K}} 
  \big \|^2
  $  
  because 
  $
  (
  \Delta 
  \beta_{g_{1}, t, \mathcal{K}} 
  )'
  \Delta
  \beta_{g_{2}, t, \mathcal{K}} 
  = 0
  $
  for all
  $g_{1}, g_{2} 
   {\in}
   \{1, \dots, G\}
  $
  with $g_{1} \not = g_{2}$.
  Thus,
  \begin{eqnarray}
    \label{eq:I-2}
    A_{3} 
    \le 
    -
    C_{12} 
    \sum_{g=1}^{G}
    \sum_{t=1}^{T}
    \big \|
    \Delta \beta_{g, t, \mathcal{K}} 
    \big \|^2
    + 
    C_{13}
    \Delta_{T}(\mathcal{K}, \theta).
  \end{eqnarray}
  For each $g = 1, \dots, G$, 
  we have partitions $\{ [\tau_{g, l-1}+1,\tau_{gl}]\}$
  of an interval $[1, T]$. 
  From,
  (\ref{eq:I-1})
  and 
  (\ref{eq:I-2}),
  $ A_{2} + A_{3} 
    \le 
    C_{14}
    \{
    \sum_{g=1}^{G}
    \sum_{l=1}^{N_{g}}
    \bar{\ell}_{g,l}(\mathcal{K},\theta)
    +
    \Delta_{T}(\mathcal{K}, \theta)
    \}
  $.  
  Hence, the result follows.
\end{proof}
\vspace{0.1cm}
 


We shall establish several properties of the terms 
$\{\bar{\ell}_{g, l}(\mathcal{K}, \theta)\}_{g=1}^{G+1}$
based on subsamples free from structural changes.  
To this end, we consider a sequence $\{\xi_{t}\}_{t=1}^{T}$ of some random vectors or matrices 
satisfying the condition under which the Hajek-Renyi inequality in Lemma \ref{lemma:HRI}
holds. 
Let $\gamma$ be a parameter vector or matrix as an element of the bounded parameter space
$\Gamma:= \{\gamma: \|\gamma \| \le C\}$.
We define an object depending on a subsample of $k$ observations free from structural changes
in $\gamma$, namely for $k=1, \dots, T$,
\begin{eqnarray*}
  \ell_{k}^{(0)}(\gamma):=
  \bigg (
  \bigg \|
  \sum_{t=1}^{k}
  \xi_{t}
  \bigg \|
  - 
  k 
  \|\gamma \|
  \bigg )
  \|\gamma \|.  
\end{eqnarray*}

We now establish a series of properties related to the likelihood function
that will enable us to prove the rate of convergence of the estimates. 
Under the level of generality adopted here, one can apply 
the arguments used in \cite{BaiLumsdaineStock1998}
to prove the properties of the likelihood function
with some modifications. 
However, since these properties are key ingredients to prove theorems, 
we provide the whole proof.



\vspace{0.1cm}
\begin{property}
  \label{property:opt1} 
  \ \ 
  $
    \sup_{1\leq k\leq T} 
    \sup_{\gamma \in \Gamma}
    \ell_{k}^{(0)}(\gamma)
    \le 
    |O_p\big ( \log T \big ) |.
  $
\end{property}
\begin{proof}[\textbf{Proof}.] 
  Let $D>0$ 
  and 
  define 
  $\Gamma_{1, k}(D)
   := 
   \{
   \gamma \in \mathcal{G}: 
   \sqrt{k} \| \gamma \| \le D (\log T)^{1/2} 
   \}
  $
  for 
  $1 \le k \le T$.
  We can write 
  $
  \ell_{k}^{(0)}(\gamma)
  =
  \big (
  k^{-1/2}
  \|
  \sum_{t=1}^{k}
  \xi_{t}
  \|
  - 
  \sqrt{k} 
  \|\gamma \|
  \big )
  \sqrt{k}
  \|\gamma \|
  $
  for every 
  $1 \le k \le T$.
  It follows that, for any $1 \le k \le T$,
  \begin{eqnarray*}
    \sup_{\gamma \in \Gamma \setminus \Gamma_{1, k}(D)}
    \ell_{k}^{(0)}(\gamma)
    \le 
    \sup_{\gamma \in \Gamma \setminus \Gamma_{1, k}(D)}
    \bigg (
    \frac{1}{\sqrt{k}}
    \bigg \|
    \sum_{t=1}^{k}
    \xi_{t}
    \bigg \|
    - 
    D (\log T)^{1/2}
    \bigg )
    \sqrt{k}
    \|\gamma \|,
  \end{eqnarray*}
  and 
  \begin{eqnarray*}
    \sup_{\gamma \in \Gamma_{1, k}(D)}
    \ell_{k}^{(0)}(\gamma)
    \le 
    \frac{1}{\sqrt{k}}
    \bigg \|
    \sum_{t=1}^{k}
    \xi_{t}
    \bigg \|
    D (\log T)^{1/2}.
  \end{eqnarray*}
  Lemma \ref{lemma:HRI}
  implies that, for any $B_{1}>0$,
  \begin{eqnarray*}
    \Pr 
    \bigg \{
    \sup_{1 \leq k\leq T}
    \frac{
      1
    }{
      \sqrt{ k \log T}
    }
    \bigg \| 
    \sum_{t=1}^{k}\xi _{t}
    \bigg \| 
    \ge 
    B_{1}
    \bigg \}
    \le 
    \frac{C_{1}}{B_{1}^{2} \log T}
    \sum_{k=1}^{T} \frac{1}{k}. 
  \end{eqnarray*}
  The right-hand side of the above inequality
  becomes arbitrarily small 
  for a sufficiently large $B_{1}$
  because $\sum_{k=1}^{T} k^{-1} = O(\log T)$.
  Thus, 
  $
  \sup_{1 \le k \le T}
  k^{-1/2}
  \|
  \sum_{t=1}^{k}
  \xi_{t}
  \|
  - D (\log T)^{1/2}
  < 0
  $
  with probability approaching 1
  for a sufficiently large $D$, so that
  \begin{eqnarray*}
    \sup_{1\leq k\leq T} 
    \sup_{\gamma \in \Gamma \setminus \Gamma_{1, k}(D)}
    \ell_{k}^{(0)}(\gamma)
    \le 
    - 
    C_{2} D^2 \log T
    \ \ \ \mathrm{and} \ \ \
    \sup_{1\leq k\leq T} 
    \sup_{\gamma \in \Gamma_{1, k}(D)}
    \ell_{k}^{(0)}(\gamma)
    \le 
    C_{3} D \log T,
  \end{eqnarray*}
  with probability approaching 1.
  Hence, the desired conclusion follows.
\end{proof}
\vspace{0.1cm}


\begin{property}
 \label{property:smallout} 
 For any $D>0$, there exists a constant $A>0$ such that,
 for any deterministic sequence $m_{T} \ge A v_{T}^{-2}$, 
  \begin{equation*}
    \sup_{m_{T} \leq k\leq T} \
    \sup_{ \gamma: \|\gamma\| \ge D v_{T}}
    \ell_{k}^{(0)}\big (\gamma \big)
    \le 
    -
    \big |
    O_p
    \big (
    (D v_{T})^2 
    m_{T}
    \big )
    \big |.
  \end{equation*}
\end{property}
\begin{proof}[\textbf{Proof}.]
  Let $D>0$ be fixed.
  We have, for every $1 \le k \le T$,
  \begin{eqnarray*}
    \sup_{\gamma:  \| \gamma \| \ge D v_{T}}
    \frac{1}{k}
    \ell_{k}^{(0)}(\gamma)
    \le 
    \sup_{\gamma:  \| \gamma \| \ge D v_{T}}
    \bigg (
    \frac{1}{k}
    \bigg \|
    \sum_{t=1}^{k}
    \xi_{t}
    \bigg \|
    - 
    D v_{T}
    \bigg )
    \|\gamma \|.
  \end{eqnarray*}
  Lemma \ref{lemma:HRI} yields that, 
  for any $A>0$
  and
  for any $\epsilon>0$,
  \begin{equation}
    \label{eq:HRI-pro5}
    \Pr
    \bigg \{
    \sup_{A v_{T}^{-2}  \le k \le T}
    \frac{
      1
    }{
      k v_{T} 
    }
    \bigg \|
      \sum_{t= 1}^{k}
      \xi_{t T}
    \bigg \| 
    >
    \epsilon
    \bigg \}
    \le
    \frac{C_{1}}{\epsilon^2}
    \bigg ( 
    \frac{1}{A}
    +
    \frac{1}{v_{T}^{2}}
    \sum_{k=Av_{T}^{-2}}^{T} 
    \frac{1}{k^2}
    \bigg).
  \end{equation}
  Because 
  $ \sum_{k=Av_{T}^{-2}}^{T} 
    k^{-2}
    = 
    O\big ( (A v_{T}^{-2})^{-1} \big)
  $,
  we can show that 
  the right-hand side of
  (\ref{eq:HRI-pro5})
  becomes arbitrarily small 
  for a sufficiently large $A>0$.
  Since 
  $\epsilon$ can be arbitrarily small,
  there exists an $A$ such that 
  \begin{eqnarray*}
    \sup_{Av_{T}^{-2}\leq k\leq T} \
    \sup_{ \gamma: \|\gamma\| \ge D v_{T}}
    \frac{
      1
    }{
      k
    }
    \ell_{k}^{(0)}\big (\gamma \big)
    \le 
    - 
    C_{2}
    (D v_{T})^{2}.
  \end{eqnarray*}
  with probability approaching 1.
  The result follows because 
  $-m_{T}^{-1} \le -k^{-1}$
  when $k \ge m_{T}$.
\end{proof}
\vspace{0.1cm}


\begin{property}
  \label{property:optT} 
  Let 
  $\Gamma_{3}(D)
   := 
   \{
   \gamma \in \mathcal{G}: 
   \sqrt{T} \| \gamma \| \le D 
   \}
  $
  for any $D>0$. Then, for any  $\delta \in (0,1)$,\\
  (a)
  there exists a $D>0$ such that 
  \begin{eqnarray*} 
    \sup_{\delta T \leq k\leq T} \ 
    \sup_{\gamma \in \Gamma \setminus \Gamma_{3}(D)}
    \ell_{k}^{(0)}(\gamma)
    \le 
    -
    |O_p(D^2)|,
  \end{eqnarray*}
  (b)
  for any $D>0$,
  \begin{eqnarray*} 
    \sup_{\delta T \leq k\leq T} \ 
    \sup_{\gamma \in \Gamma_{3}(D)}
    \ell_{k}^{(0)}(\gamma)
    =
    O_p(D).
  \end{eqnarray*}
\end{property}
\begin{proof}[\textbf{Proof}.]
  Let 
  $\delta \in (0,1)$ 
  be fixed.
  Then, we have, for every $\delta T \le k \le T$ and for any $D>0$,  
  \begin{eqnarray}
    \label{eq:v-1q}
    \sup_{\gamma \in \Gamma \setminus \Gamma_{3}(D)}
    \ell_{k}^{(0)}(\gamma)
    \le 
    \sup_{\gamma \in \Gamma \setminus \Gamma_{3}(D)}
    \bigg (
    \frac{1}{\sqrt{T}}
    \bigg \|
    \sum_{t=1}^{k}
    \xi_{t}
    \bigg \|
    - 
    \delta D
    \bigg )
    \sqrt{T}
    \|\gamma \|,
  \end{eqnarray}
  and
  \begin{eqnarray}
    \label{eq:v-2q}
    \sup_{\gamma \in  \Gamma_{3}(D)}
    |\ell_{k}^{(0)}(\gamma)|
    \le 
    \frac{1}{\sqrt{T}}
    \bigg \|
    \sum_{t=1}^{k}
    \xi_{t}
    \bigg \|
    D.
  \end{eqnarray}
  Lemma \ref{lemma:HRI}
  implies that 
  $
    \sup_{\delta T \leq k\leq T} 
    \big \|
    \sum_{t=1}^{k}
    \xi_{t}
    \big \|
    = O_p(\sqrt{T})
  $.
  It follows from (\ref{eq:v-1q}) that,
  for some $D>0$,
  $
    \sup_{\delta T \leq k\leq T} 
    \sup_{\gamma \in \Gamma \setminus \Gamma_{3}(D)}
    \ell_{k}^{(0)}(\gamma)
    \le 
    - 
    C_{1} D^2 
  $
  with probability approaching 1,
  while 
  it follows from (\ref{eq:v-2q}) that 
  $
    \sup_{\delta T \leq k\leq T} 
    \sup_{\gamma \in  \Gamma_{3}(D)}
    |\ell_{k}^{(0)}(\gamma)|
    \le 
     C_{2} D
  $
  with probability approaching 1,
  for any $D>0$. 
  Hence, the desired result follows.
\end{proof}
\vspace{0.1cm}


\begin{property}
  \label{property:smallin} 
  For any constant $M>0$ and a deterministic sequence $b_{T}>0$, we have 
  \begin{equation*}
    \sup_{1\leq k\leq M v_{T}^{-2}} 
    \sup_{ \gamma: \| \gamma \| \le b_{T} }
    \ell_{k}^{(0)}\big (\gamma \big )
    =  O_p(M^{1/2} v_{T}^{-1} b_{T} ).
  \end{equation*}
\end{property}
\begin{proof}[\textbf{\textrm{Proof}}.]
  We have that 
  $
    \sup_{1 \le k \le M v_{T}^{-2}}
    \sup_{ \gamma: \| \gamma \| \le b_{T} }
    |\ell_{k}^{(0)}
    \big ( \gamma \big )|
    \leq 
    \sup_{1 \le k \le M v_{T}^{-2}}
    \|
    \sum_{t=1}^{k}
    \xi_{t}
    \|
    b_{T}
  $
  for any $M>0$.
  Lemma \ref{lemma:HRI} yields 
  $\sup_{1 \le k \le M v_{T}^{-2}}
    \|
      \sum_{t=1}^{k}
      \xi_{t}
    \|
    \le 
    O_p\big ( (M v_{T}^{-2})^{1/2} \big )
  $.
\end{proof}
\vspace{0.1cm}



For $\tau_{G, l-1} + 1\le t \le \tau_{Gl}$,
we can show that 
\begin{eqnarray*}
  \| \Psi_{l} \|
  \le 
  \| (\Sigma _{t, \mathcal{K}^{0}}^{0})^{- 1 / 2}\|^2 
  \| \Sigma_{t, \mathcal{K}} -\Sigma_{t, \mathcal{K}^{0}}^{0} \|
  \ \ \ \mathrm{and } \ \ \
  \| \Sigma_{t, \mathcal{K}} -\Sigma_{t, \mathcal{K}^{0}}^{0} \|
  \le 
  \| (\Sigma _{t, \mathcal{K}^{0}}^{0})^{1 / 2}\|^2 
  \| \Psi_{l} \|,
\end{eqnarray*}
Since 
$\| (\Sigma _{t, \mathcal{K}^{0}}^{0})^{1 / 2}\|$
and 
$\| (\Sigma _{t, \mathcal{K}^{0}}^{0})^{-1 / 2}\|$
are bounded and 
$\Vert \Psi_{l}  \Vert = \max_{1 \le i \le n}|\lambda _{il}^{\Psi}|$,
we have 
\begin{eqnarray*}
  d_{1}
  \| \Sigma_{t, \mathcal{K}} -\Sigma_{t, \mathcal{K}^{0}}^{0} \|
  \le 
  \max_{1 \le i \le n}|\lambda _{il}^{\Psi}|
  \le 
  d_{2}
  \| \Sigma_{t, \mathcal{K}} -\Sigma_{t, \mathcal{K}^{0}}^{0} \|,
\end{eqnarray*}
for some constants $d_{1}, d_{2} >0$.
This relation will be used when we restrict the space for the 
covariance matrix of the error.
The next proposition presents a result about 
the break date estimates.



\vspace{0.1cm}
\begin{proposition}
  \label{pro:date1} 
  Under Assumptions A1-A5,
  there exists a $B>0$ such that 
  \begin{eqnarray*}
    \lim_{T \to \infty}
    \Pr
    \big \{
    \big |
     \hat{k}_{gj}- k_{gj}^{0}
    \big |
      >
     B v_{T}^{-2} \log T
    \big \} 
    =
    0,
  \end{eqnarray*}
  for every $(g, j) \in \{1,...,G\} \times \{1,...,m \}$.
\end{proposition}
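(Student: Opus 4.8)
The plan is to run a contradiction argument driven by the optimality of $(\hat{\mathcal{K}}, \hat{\theta})$ together with the decomposition of the normalized log-likelihood in Lemma~\ref{lemma:decomposition-l}. First I would record that, since $\mathcal{K}^{0}\in\Xi_{\nu}$ (Assumption A3), $\theta^{0}\in\Theta$ (Assumption A5) and $R(\theta^{0})=0$, the pair $(\mathcal{K}^{0},\theta^{0})$ is feasible in (\ref{k-hat}); optimality of $(\hat{\mathcal{K}},\hat{\theta})$ then gives $\ell_{T}(\hat{\mathcal{K}},\hat{\theta})\ge \ell_{T}(\mathcal{K}^{0},\theta^{0})=0$. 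Applying Lemma~\ref{lemma:decomposition-l} at $(\hat{\mathcal{K}},\hat{\theta})$ bounds $\ell_{T}(\hat{\mathcal{K}},\hat{\theta})$ from above by a constant times $\sum_{g,l}\bar{\ell}_{g,l}+\sum_{l}\bar{\ell}_{G+1,l}+\Delta_{T}$, so the task reduces to showing that, on the event $\{|\hat{k}_{gj}-k_{gj}^{0}|>Bv_{T}^{-2}\log T\}$, this upper bound is strictly negative with probability tending to one once $B$ is taken large.

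Next I would dispose of the background terms. Each of the sequences $\{X_{tT}\Sigma_{t,\mathcal{K}}^{-1}u_{t}\}$ and $\{\eta_{t}\eta_{t}'-I_{n}\}$ satisfies the generalized Hajek-Renyi inequality of Lemma~\ref{lemma:HRI}, so Property~\ref{property:opt1} yields $\bar{\ell}_{g,l}(\hat{\mathcal{K}},\hat{\theta})\le|O_{p}(\log T)|$ and $\bar{\ell}_{G+1,l}(\hat{\mathcal{K}},\hat{\theta})\le|O_{p}(\log T)|$ uniformly in $l$. Because all regimes of $\hat{\mathcal{K}}$ lie in $\Xi_{\nu}$ and the true ones satisfy Assumption A3, the number of sub-intervals $N_{g}$ is at most $2m+1$ for each $g$, so the total background contribution is $|O_{p}(\log T)|$; moreover $\Delta_{T}(\hat{\mathcal{K}},\hat{\theta})=\max_{t}\|\Delta\beta_{t,\hat{\mathcal{K}}}\|\le 2c_{1}=O_{p}(1)$ by the compactness in Assumption A5. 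Hence $\ell_{T}(\hat{\mathcal{K}},\hat{\theta})$ can remain non-negative only if no single sub-interval contributes a large negative term.

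The heart of the argument is to exhibit such a negative term on the bad event. Take $\hat{k}_{gj}>k_{gj}^{0}+Bv_{T}^{-2}\log T$ (the reverse inequality being symmetric). On the region $(k_{gj}^{0},\hat{k}_{gj}]$ the estimate assigns $\hat{\beta}_{gj}$ while the truth is $\beta_{g,j+1}^{0}$; since $Bv_{T}^{-2}\log T=o(T)$ (because $\sqrt{T}v_{T}/\log T\to\infty$ forces $v_{T}^{-2}\log T\ll T/\log T$) and every regime of $\hat{\mathcal{K}}$ and $\mathcal{K}^{0}$ has length at least $T\nu$, this region is a single sub-interval of the Lemma~\ref{lemma:decomposition-l} partition. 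Since $\|\beta_{gj}^{0}-\beta_{g,j+1}^{0}\|=v_{T}\|\delta_{gj}\|$ by Assumption A4, the triangle inequality shows that $\hat{\beta}_{gj}$ is at distance at least $\tfrac{1}{2}v_{T}\|\delta_{gj}\|$ from at least one of the two adjacent true values: either the misclassified region itself carries deviation $\ge Dv_{T}$ (if $\hat{\beta}_{gj}$ is far from $\beta_{g,j+1}^{0}$), or $\hat{\beta}_{gj}$ lies close to $\beta_{g,j+1}^{0}$ and hence far from $\beta_{gj}^{0}$, so that the correctly classified regime-$j$ block carries the large deviation. In either case one obtains a sub-interval of length $\gtrsim Bv_{T}^{-2}\log T$ on which the deviation has norm at least $Dv_{T}$ with $D:=\tfrac{1}{2}\|\delta_{gj}\|>0$, and Property~\ref{property:smallout} applied with $m_{T}=Bv_{T}^{-2}\log T$ bounds the corresponding $\bar{\ell}_{g,l}$ by $-|O_{p}((Dv_{T})^{2}m_{T})|=-|O_{p}(D^{2}B\log T)|$; the covariance breaks are handled identically through $\bar{\ell}_{G+1,l}$ and $\Phi_{j}$.

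Collecting the pieces gives $0\le\ell_{T}(\hat{\mathcal{K}},\hat{\theta})\le C\{|O_{p}(\log T)|-|O_{p}(D^{2}B\log T)|+O_{p}(1)\}$, and since $D$ is a fixed constant bounded away from zero after minimizing over the finitely many $(g,j)$, choosing $B$ large enough relative to the constants in the background term makes the right-hand side negative with probability approaching one — a contradiction forcing $\Pr\{|\hat{k}_{gj}-k_{gj}^{0}|>Bv_{T}^{-2}\log T\}\to0$, and a union bound over the finitely many $(g,j)$ finishes the proof. I expect the main obstacle to be the bookkeeping of the third paragraph: isolating one sub-interval that is simultaneously long (order $v_{T}^{-2}\log T$) and carries deviation of exact order $v_{T}$, since the second case produces a long correctly classified block only when no neighbouring break date is itself badly estimated, so the coefficient consistency and the break-date rate must be controlled jointly across all $(g,j)$ rather than sequentially. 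The delicate matching of the $\log T$ factors on the two sides is precisely what yields the extra $\log T$ in this preliminary rate, later removed in Theorem~\ref{theorem:rate}.
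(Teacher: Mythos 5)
Your skeleton is the same as the paper's: feasibility of $(\mathcal{K}^{0},\theta^{0})$ in (\ref{k-hat}) forces $\ell_{T}(\hat{\mathcal{K}},\hat{\theta})\ge 0$; Lemma \ref{lemma:decomposition-l} bounds this above by background terms of order $|O_{p}(\log T)|$ from Property \ref{property:opt1} plus a bounded $\Delta_{T}$; and the contradiction comes from exhibiting one sub-interval of length at least $Bv_{T}^{-2}\log T$ carrying a parameter deviation of order $v_{T}$, to which Property \ref{property:smallout} with $m_{T}=Bv_{T}^{-2}\log T$ applies, giving $-|O_{p}(B\log T)|$ and dominating once $B$ is large.

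The genuine gap is the localization step in your third paragraph, which you flag but do not close. You anchor on the displaced estimate $\hat{k}_{gj}$ and the region $(k_{gj}^{0},\hat{k}_{gj}]$; but this region need not be a single sub-interval of the group-$g$ partition (on the bad event the displacement is only bounded \emph{below} by $Bv_{T}^{-2}\log T$ and can be a fixed fraction of $T$, so the region may contain $k_{g,j+1}^{0}$, or $\hat{k}_{g,j-1}$ if that date is also displaced to the right), and your second case fails exactly when $\hat{k}_{g,j-1}$ sits near $k_{gj}^{0}$, leaving no long block on which $\hat{\beta}_{gj}$ confronts $\beta_{gj}^{0}$. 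The paper avoids this joint bookkeeping by anchoring on a \emph{true} break date instead of an estimated one: if $\mathcal{K}$ lies outside $\{\mathcal{K}\in\Xi_{\nu}:\max_{g,j}|k_{gj}-k_{gj}^{0}|\le Bv_{T}^{-2}\log T\}$, then, since the $m$ true dates of group $g$ are mutually separated by at least $\nu T$ while $\mathcal{K}_{g}$ has only $m$ ordered elements, a pigeonhole argument produces some $k_{gj}^{0}$ whose $Bv_{T}^{-2}\log T$-neighborhood contains \emph{no} element of $\mathcal{K}_{g}$. Then $\tau_{gl}=k_{gj}^{0}$ is flanked by two sub-intervals each of length at least $Bv_{T}^{-2}\log T$ on whose union the group-$g$ candidate parameter is a single constant, and the triangle inequality applied to the jump $\|\beta_{g,j+1}^{0}-\beta_{gj}^{0}\|=v_{T}\|\delta_{gj}\|$ (or $v_{T}\|\Phi_{j}\|$ for the covariance group) forces a deviation of at least $Cv_{T}/2$ on one of the two flanking sub-intervals, uniformly over all such $\mathcal{K}$ and $\theta\in\Theta$ rather than only at the optimizer. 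Supplying that pigeonhole step is what turns your sketch into a complete proof.
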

\begin{proof}[\textbf{\textrm{Proof}}.]
  For a constant $B>0$, define
  \begin{eqnarray*}
    \ddot{\Xi}(B)
    :=
    \Big \{
    \mathcal{K}
    \in \Xi_{\nu}
    :
    \max_{1 \le g \le G}
    \max_{1 \le j \le m}
    |k_{gj} - k_{gj}^{0} | \le B v_{T}^{-2} \log T
    \Big \}.
  \end{eqnarray*}
  To prove the assertion, we shall show that, 
  for a sufficiently large $B>0$, 
  \begin{eqnarray}
    \label{eq:suff-ProA1}
    \lim_{T \to \infty}
    \Pr
    \bigg \{
    \sup_{ (\mathcal{K}, \theta) \in  \Xi_{\nu} \setminus  \ddot{\Xi}(B)  \times {\Theta}}
    \ell_{T}
    \big (
    K,\theta 
    \big )
    \ge 0
    \bigg \} = 0.
  \end{eqnarray}
  Since the normalized log likelihood evaluated at the maximum likelihood estimates 
  should be non-negative, the desired conclusion follows from 
  (\ref{eq:suff-ProA1}).

  To show (\ref{eq:suff-ProA1}),
  we examine the upper bound in Lemma \ref{lemma:decomposition-l}
  given sets of break dates $\mathcal{K}\not \in  \ddot{\Xi}(B)$ 
  and $\mathcal{K}^{0}$. 
  First, 
  observe that 
  Property \ref{property:opt1} provides a
  not necessarily sharp but general upper bound in probability 
  and 
  that 
  the parameter space is bounded.
  Thus,
  \begin{eqnarray}
    \label{eq:bound-Pro1-1}
    \sup_{ (\mathcal{K}, \theta) \in  \Xi_{\nu} \setminus  \ddot{\Xi}(B)  \times {\Theta}}
    \bar{\ell}_{g, l}(\mathcal{K}, \theta)
    \le |O_P(\log T)|
    \ \ \ \mathrm{and} \ \ \
    \sup_{ (\mathcal{K}, \theta) \in  \Xi_{\nu} \setminus  \ddot{\Xi}(B)  \times {\Theta}}
    \Delta(\mathcal{K}, \theta) \le C_{1},   
  \end{eqnarray}
  for every $1 \le g \le G+1$ and $1 \le l \le 2(m{+}1)$.

  Next, for $\mathcal{K} \not \in  \ddot{\Xi}(B)$,
  there exits a pair $(g, j) \in \{1,...,G\} \times \{1,...,m \}$
  such that 
  some neighborhood 
  $\mathcal{N}_{gj}:=\{t \in [1,T]: |t - k_{gj}^{0} | \le B v_{T}^{-2} \log T \}$ 
  of a true break date, $k_{gj}^{0}$,
  contains none of the break dates $\mathcal{K}_{g}$ of the $g^{th}$ group, i.e.,
  $ \mathcal{K}_{g} 
   \not\subset 
   \mathcal{N}_{gj}
  $.  
  This implies that 
  there is a
  $\tau_{gl} = k_{gj}^{0}$
  with 
  a union of sub-intervals 
  \begin{eqnarray*}
    [\tau_{g,l-1}{+}1, \tau_{gl}]
    \cup
    [\tau_{gl}{+}1, \tau_{g,l+1}]
    \ \ \mathrm{with} \ \
    \min_{l \le j \le l+1}
    (\tau_{gj}- \tau_{g,j-1}) \ge B v_{T}^{-2} \log T.
  \end{eqnarray*}
  Since 
  $ \mathcal{K}_{g} 
   \not\subset 
   (\tau_{g,l-1}, \tau_{g,l+1})
  $,  
  the $g^{th}$ group estimates are constant 
  for $\tau_{g,l-1}+1 \le t \le \tau_{g,l+1}$
  and 
  both 
  $\bar{\ell}_{g,l}(\mathcal{K}, \theta)$
  and 
  $\bar{\ell}_{g,l+1}(\mathcal{K}, \theta)$
  depend on the same $g^{th}$ group estimates.  
  Note that the triangle inequality yields that 
  \begin{eqnarray*}
    C_{2} v_{T}
    \le 
    2 \max
    \big \{ 
    \big \|
    {\beta}_{g, \tau_{g,l+1}, \mathcal{K}} 
    -
    \beta_{g, \tau_{gl}, \mathcal{K}^{0}}^{0} 
    \big \|,
    \big \|
    {\beta}_{g, \tau_{g,l+1}, \mathcal{K}} 
    -
    \beta_{g, \tau_{g,l+1}, \mathcal{K}^{0}}^{0} 
    \big \|
    \big \},
  \end{eqnarray*}
  and additionally when $g = G$, 
  \begin{eqnarray*}
    C_{3} v_{T}
    \le 
    2 \max
    \big \{ 
    \big \|
    {\Sigma}_{\tau_{G,l+1}, \mathcal{K}} 
    -
    \Sigma_{\tau_{Gl}, \mathcal{K}^{0}}^{0} 
    \big \|,
    \big \|
    {\Sigma}_{\tau_{G,l+1}, \mathcal{K}} 
    -
    \Sigma_{\tau_{G,l+1}, \mathcal{K}^{0}}^{0} 
    \big \|
    \big \}.
  \end{eqnarray*}
  This implies that 
  either 
  $\bar{\ell}_{g,l}(\mathcal{K}, {\theta})$
  or
  $\bar{\ell}_{g,l+1}({\mathcal{K}}, {\theta})$
  satisfies the condition in Property \ref{property:smallout} 
  with $m_{T} = B v_{T}^{-2} \log T$,
  which together with 
  (\ref{eq:bound-Pro1-1})
  implies that, for a sufficiently large $B$,
  \begin{eqnarray*}
    \sup_{ (\mathcal{K}, \theta) \in  \Xi_{\nu} \setminus  \ddot{\Xi}(B)  \times {\Theta}}
    {\ell}_{T}(\mathcal{K}, {\theta})     
    \le 
    - |O_p( B \log T)|
    + O_p(\log T).
  \end{eqnarray*}
  This yields (\ref{eq:suff-ProA1}) 
  and 
  thus completes the proof.
\end{proof}
\vspace{0.1cm}




 
\begin{proposition}
  \label{proposition:sqrN} 
  Suppose that  Assumptions A1-A5 hold. Then,
  \begin{eqnarray*}
    \hat{\beta}_{gj}-\beta _{gj}^{0} 
    = 
    o_p(v_{T})
    \ \ \ \ \mathrm{and}  \ \ \ \
    \hat{\Sigma}_{j}-\Sigma _{j}^{0}
    =
    o_p(v_{T}),
  \end{eqnarray*}
  for every $(g, j) \in \{1,...,G\} \times \{1,...,m +1\}$.
\end{proposition}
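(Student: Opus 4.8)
The plan is to argue by contradiction, exploiting the optimality of the maximum likelihood estimates together with the decomposition in Lemma \ref{lemma:decomposition-l} and the probabilistic bounds in Properties \ref{property:opt1} and \ref{property:smallout}. Since $\ell_{T}(\mathcal{K}^{0},\theta^{0})=0$ and $(\hat{\mathcal{K}},\hat{\theta})$ maximizes $\ell_{T}$, we always have $\ell_{T}(\hat{\mathcal{K}},\hat{\theta})\ge 0$. It therefore suffices to show that, for any fixed $D>0$ and any $(g,j)$, the event $\mathcal{A}_{gj}:=\{\|\hat{\beta}_{gj}-\beta_{gj}^{0}\|>D v_{T}\}$ (and its covariance-matrix analogue) forces the right-hand side of Lemma \ref{lemma:decomposition-l} to be strictly negative with probability approaching one, which is impossible.

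First I would invoke Proposition \ref{pro:date1} to restrict attention to the event $\hat{\mathcal{K}}\in\ddot{\Xi}(B)$, which has probability tending to one, so that $|\hat{k}_{gj}-k_{gj}^{0}|\le B v_{T}^{-2}\log T$ uniformly. Because Assumption A3 guarantees $k_{gj}^{0}-k_{g,j-1}^{0}\ge T\nu$, the estimated and true regime-$j$ boundaries of group $g$ overlap on a sub-interval of length at least $T\nu-2B v_{T}^{-2}\log T$, which diverges at rate $T$. On this sub-interval one has $\Delta\beta_{g,t,\mathcal{K}}=\hat{\beta}_{gj}-\beta_{gj}^{0}$ exactly, so it corresponds to a single index $l$ in the group-$g$ partition with $\tau_{gl}-\tau_{g,l-1}\asymp T$ and, on $\mathcal{A}_{gj}$, $\|\Delta\beta_{g,\tau_{gl},\mathcal{K}}\|>D v_{T}$.

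Next I would identify $\bar{\ell}_{g,l}(\mathcal{K},\theta)$ on this long sub-interval with $\ell_{k}^{(0)}(\gamma)$ for the (relabeled) sequence $\xi_{t}=X_{tT}\Sigma_{t,\mathcal{K}}^{-1}u_{t}$, with $k=\tau_{gl}-\tau_{g,l-1}$ and $\gamma=\hat{\beta}_{gj}-\beta_{gj}^{0}$. Applying Property \ref{property:smallout} with $m_{T}\asymp T\nu$ (legitimate since $T v_{T}^{2}\to\infty$ forces $T\nu\ge A v_{T}^{-2}$ eventually) yields $\bar{\ell}_{g,l}\le-|O_{p}(D^{2}v_{T}^{2}T)|$. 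The covariance case is handled identically: on the same long sub-interval the eigenvalue bounds stated just before the proposition give $\max_{1\le i\le n}|\lambda_{il}^{\Psi}|\ge d_{1}\|\hat{\Sigma}_{j}-\Sigma_{j}^{0}\|>d_{1}D v_{T}$, so at least one summand of $\bar{\ell}_{G+1,l}$ is likewise $\le-|O_{p}(D^{2}v_{T}^{2}T)|$ by Property \ref{property:smallout}. All remaining contributions are benign: there are only finitely many sub-intervals (of order $m$), each $\bar{\ell}_{g,l}$ and each of the finitely many remaining $\bar{\ell}_{G+1,l}$ summands is $|O_{p}(\log T)|$ by Property \ref{property:opt1}, and $\Delta_{T}(\mathcal{K},\theta)\le C$. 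Hence on $\mathcal{A}_{gj}$ the entire right-hand side of Lemma \ref{lemma:decomposition-l} is bounded by $C\{-|O_{p}(D^{2}v_{T}^{2}T)|+|O_{p}(\log T)|\}$.

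The main obstacle, and the step deserving care, is establishing the dominance of the negative term over the aggregate $|O_{p}(\log T)|$ error. This hinges entirely on Assumption A4: from $\sqrt{T}v_{T}/\log T\to\infty$ we obtain $T v_{T}^{2}/(\log T)^{2}\to\infty$, so $D^{2}v_{T}^{2}T$ grows strictly faster than $\log T$ and the bracketed quantity is negative with probability approaching one. This contradicts $\ell_{T}(\hat{\mathcal{K}},\hat{\theta})\ge 0$, forcing $\Pr(\mathcal{A}_{gj})\to 0$; since $D>0$ was arbitrary, we conclude $\hat{\beta}_{gj}-\beta_{gj}^{0}=o_{p}(v_{T})$ and, by the identical covariance argument, $\hat{\Sigma}_{j}-\Sigma_{j}^{0}=o_{p}(v_{T})$, for every $(g,j)\in\{1,\dots,G\}\times\{1,\dots,m+1\}$.
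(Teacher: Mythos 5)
Your proposal is correct and follows essentially the same route as the paper's proof: restrict to $\hat{\mathcal{K}}\in\ddot{\Xi}(B)$ via Proposition \ref{pro:date1}, isolate a length-$O(T)$ overlap sub-interval on which the parameter deviation of order $v_{T}$ persists, apply Property \ref{property:smallout} with $m_{T}\asymp T$ to get a $-|O_{p}(v_{T}^{2}T)|$ term, bound everything else by $|O_{p}(\log T)|$ via Property \ref{property:opt1}, and use $\sqrt{T}v_{T}/\log T\to\infty$ together with $\ell_{T}(\hat{\mathcal{K}},\hat{\theta})\ge 0$ to conclude. The only cosmetic difference is that you phrase the argument via the events $\mathcal{A}_{gj}$ for fixed $D$ rather than taking a supremum over $\Theta\setminus\ddot{\Theta}(\epsilon)$ as the paper does, which is an equivalent formulation.
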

\begin{proof}[\textbf{\textrm{Proof}}.]
  Let $\epsilon>0$ be fixed
  and 
  define a subset of the parameter space ${\Theta}$:
  \begin{eqnarray*}
    \ddot{\Theta}(\epsilon)
    :=
    \Big \{
     \theta
     \in 
     {\Theta}
     :
     \max_{1 \le g \le G}
     \max_{1 \le j \le m+1}
     \|\beta _{gj}-\beta _{gj}^{0}\| \le \epsilon v_{T}
     \ \mathrm{and} \ 
     \max_{1 \le j \le m+1}
     \|\Sigma_{j}-\Sigma _{j}^{0}\| \le \epsilon v_{T}
    \Big \}.
  \end{eqnarray*}
  Proposition \ref{pro:date1}
  shows that 
  the break date estimates 
  $\hat{\mathcal{K}}$
  are included in 
  $\ddot{\Xi}(B)$
  with probability approaching 1
  for a sufficiently large $B$
  and thus we consider the case
  where 
  $\mathcal{K} \in \ddot{\Xi}(B)$.
  For 
  $\theta \in {\Theta} \setminus \ddot{\Theta}(\epsilon)$,
  there exists  
  a pair $(g, j) \in \{1,...,G\} \times \{1,...,m \}$
  such that 
  either 
  \begin{eqnarray}
    \label{eq:out-e}
    \|{\beta} _{gj}-\beta _{gj}^{0}\| \ge \epsilon v_{T}
    \ \ \ \mathrm{or} \ \ \ 
    \|{\Sigma}_{j}-\Sigma _{j}^{0}\| \ge  \epsilon v_{T}.
  \end{eqnarray}
  Observe that 
  ${k}_{gj} - {k}_{g, j-1} \ge \nu T $
  and 
  $k_{gj}^{0} - k_{g, j-1}^{0} \ge \nu T $,
  while 
  $|{k}_{gj} - k_{gj}^{0}| \le B v_{T}^{-2} \log T$. 
  For some $l \in\{1, \dots, N_{g}\}$,
  we have 
  $\tau_{g, l-1} 
   = 
   \max
   \{
   {k}_{g,j-1},
   k_{g,j-1}^{0}
   \}
  $
  and 
  $\tau_{g l} 
   = 
   \max
   \{
   {k}_{g,j},
   k_{gj}^{0}
   \}
  $
  satisfying 
  $\tau_{g l} - \tau_{g, l-1} \ge \delta T$
  for some $\delta \in (0,1)$
  and that 
  (\ref{eq:out-e}) holds over a sub-interval
  $[\tau_{g, l-1}+1, \tau_{g l}]$.
  Thus, Property \ref{property:smallout} 
  with $m_{T} = \delta T$
  implies that 
  \begin{eqnarray*}
    \sup_{ 
      ( \mathcal{K}, \theta) 
      \in 
      \ddot{\Xi}(B) 
      \times
      {\Theta} \setminus \ddot{\Theta}(\epsilon)
    }
    \bar{\ell}_{g, l}(\mathcal{K}, \theta)
    \le 
    - |O_p(\epsilon^2 T v_{T}^{2})|.
  \end{eqnarray*}
  For the other sub-intervals, 
  Property \ref{property:opt1} provides an upper bound 
  of order 
  $|O_p(\log T)|$.  
  Since $\sqrt{T} v_{T}/ \log T \to \infty$ as $T \to \infty$, 
  we can show that 
  \begin{eqnarray*}
    \sup_{ 
      ( \mathcal{K}, \theta) 
      \in 
      \ddot{\Xi}(B) 
      \times
      {\Theta} \setminus \ddot{\Theta}(\epsilon)
    }
    \ell_{T}
    \big (\mathcal{K}, \theta \big)
    \le 
    - |O_p(\epsilon^2 T v_{T}^2)|.
  \end{eqnarray*}  
  This leads to the desired result.
\end{proof}
\vspace{0.1cm}



Propositions \ref{pro:date1} and  \ref{proposition:sqrN} are important intermediate steps to establish the convergence rates of the estimates as stated in the theorem below. 
A similar approach was used by \cite{BaiLumsdaineStock1998}, \cite{Bai2000AEF}
and \cite{QuPerron2007Emtca}
when break dates are assumed to either have a common location or be asymptotically distinct. 
A key difference between their approach and ours is that 
we allow for the possibility that the break dates associated with different basic parameters may not be asymptotically distinct.



\vspace{0.1cm}
\begin{proof}[\textbf{\textrm{Proof of Theorem 1}}.]
  \textbf{(a)}
  Proposition \ref{pro:date1} shows that 
  $\hat{\mathcal{K}}
   \in 
   \ddot{\Xi}(B)
  $
  with probability approaching 1
  for some $B>0$,
  while  
  both $\hat{\mathcal{K}}$ and $\mathcal{K}^{0}$ are 
  included in $\Xi_{\nu}$. 
  Thus, 
  it suffices to consider the case where 
  either 
  $\tau_{gl} - \tau_{g, l-1} \ge \delta T$
  for some $\delta>0$
  or 
  $\tau_{gl} - \tau_{g, l-1} \le  B v_{T}^{-2} \log T$
  for every 
  $(g, l) \in \{ 1, \dots, G\} \times \{1, \dots, N\}$.
  If $\tau_{gl} - \tau_{g, l-1} \ge \delta T$, then 
  Property \ref{property:optT} implies that 
  \begin{eqnarray}
    \label{eq:th-1}
    \sup_{(\mathcal{K}, \theta) \in \ddot{\Xi}(B)\times {\Theta}}
    \bar{\ell}_{g,l}({\mathcal{K}}, {\theta}) \le 
    |O_p(1)|.
  \end{eqnarray}
  When 
  $\tau_{gl} - \tau_{g, l-1} \le  B v_{T}^{-2} \log T$,
  there are two cases:
  $M v_{T}^{-2} \le \tau_{gl} - \tau_{g, l-1} \le  B v_{T}^{-2} \log T$
  and 
  $ \tau_{gl} - \tau_{g, l-1} \le  M v_{T}^{-2}$
  for some $M>0$.
  For sake of concreteness, 
  let 
  $\tau_{g, l-1} = k_{gj}^{0}$
  and 
  $\tau_{gl} = \hat{k}_{gj} $
  in both cases. 
  When $k_{gj}^{0}+1 \le t \le \hat{k}_{gj}$,
  we have 
  $
  (
  \hat{\beta}_{g, t, \hat{\mathcal{K}}},
  \beta_{g, t, \mathcal{K}^{0}}^{0}
  )
  = 
  (
  \hat{\beta}_{gj}, 
  \beta_{g,j+1}^{0}
  )
  $
  for $1 \le g \le G$
  and 
  $
  (
  \hat{\Sigma}_{t, \hat{\mathcal{K}}},
  \Sigma_{t, \mathcal{K}^{0}}^{0}
  )
  = 
  (
  \hat{\Sigma}_{j}, 
  \Sigma_{j+1}^{0}
  )
  $
  for $g = G$.
  Since 
  $\|\beta _{g,j+1}^{0}-\beta _{gj}^{0}\| = v_{T} \|\delta _{gj}\|$
  and 
  $
    \| {\Sigma}_{j+1}^{0} - \Sigma_{j}^{0} \|
    =
    v_{T} \|\Phi_{j}\|
  $, 
  we can show\footnote{
    To prove this, 
    we use the inequality, 
    $
      \| a - b \| - \| b - c\|
      \le 
      \|a- c \| 
      \le 
      \| a - b \| + \| b - c\|
    $
    for any elements $a$, $b$ and $c$ 
    of some space with the norm $\|\cdot \|$,
    which is due to 
    the triangle inequality.
  } 
  \begin{eqnarray*}
    \Big |
    \| \hat{\beta}_{gj}-\beta _{g,j+1}^{0} \|
    -
    v_{T} \|\delta_{gj} \|
    \Big |
    \le 
    \| \hat{\beta} _{gj}-\beta_{gj}^{0} \|
     \ \ \mathrm{and} \ \
    \Big | 
    \| \hat{\Sigma}_{j}-\Sigma _{j+1}^{0} \|
    -
    v_{T} \|\Phi_{j} \|
    \Big |
    \le 
    \| \hat{\Sigma}_{j} - \Sigma_{j}^{0} \|.
  \end{eqnarray*}
  Moreover,
  Proposition \ref{proposition:sqrN} 
  shows that 
  $
    \|  \hat{\beta}_{gj}-\beta _{gj}^{0} \| = o_p(v_{T})
  $
  and 
  $
    \| \hat{\Sigma}_{j} - \Sigma_{j}^{0} \|
    = o_p(v_{T})
  $.
  Thus,
  \begin{eqnarray}
    \label{eq:diff-B-1}
    \| \hat{\beta}_{gj}-\beta _{g,j+1}^{0} \|
    =
    v_{T} \|\delta_{gj} \|
    +o_p(v_{T})
     \ \ \mathrm{and} \ \
    \| \hat{\Sigma}_{j}-\Sigma _{j+1}^{0} \|
    =
    v_{T} \|\Phi_{j} \|
    + o_p(v_{T}).
  \end{eqnarray}
  When 
  $M v_{T}^{-2} \le \tau_{gl} - \tau_{g, l-1} \le  B v_{T}^{-2} \log T$,
  Property \ref{property:smallout} together with (\ref{eq:diff-B-1})
  implies that 
  \begin{eqnarray}
    \label{eq:th-2}
    \bar{\ell}_{g,l}(\hat{\mathcal{K}}, \hat{\theta}) \le 
    - |O_p(M)|,
  \end{eqnarray}
  for a sufficiently large $M$,
  while, 
  for $ \tau_{gl} - \tau_{g, l-1} \le  M v_{T}^{-2}$, 
  Property \ref{property:smallin} 
  with (\ref{eq:diff-B-1})
  implies 
  \begin{eqnarray}
    \label{eq:th-3}
    \bar{\ell}_{g,l}(\hat{\mathcal{K}}, \hat{\theta}) \
    =
    O_p(M^{1/2}).
  \end{eqnarray}
  Since 
  $\sup_{(\mathcal{K}, \theta) \in \ddot{\Xi}(B) \times \ddot{\Theta}({\epsilon})}
   \Delta(\mathcal{K}, \theta)
   = o(1)
  $,
  Lemma \ref{lemma:decomposition-l}
  with
  (\ref{eq:th-1}),
  (\ref{eq:th-2})
  and 
  (\ref{eq:th-3}) 
  implies 
  \begin{eqnarray*}
    \sup_{(\mathcal{K}, \theta) \in \ddot{\Xi}(B) \setminus \bar{\Xi}_{M} \times \ddot{\Theta}({\epsilon})}
    \ell_{T}(\mathcal{K}, \theta) < 
    - |O_p(M)|,
  \end{eqnarray*}
  for a sufficiently large $M$.
  This completes the proof of part (a).

  \textbf{(b)}
  From part (a), there exists an $M>0$ such that 
  $\max_{1 \le g \le G}
   \max_{1 \le j \le m}
   |\hat{k}_{gj}-k_{gj}^{0}| \le M v_{T}^{-2}$
  with probability approaching 1.
  Thus it suffices to consider the case where
  either 
  $\tau_{gl} - \tau_{g,l-1} \le M v_{T}^{-2}$
  or 
  $\tau_{gl} - \tau_{g,l-1} > \delta T$
  for some $\delta>0$. 
  As in (\ref{eq:th-1}) and (\ref{eq:th-3}), 
  we can show that 
  $\bar{\ell}_{g,l}(\hat{\mathcal{K}}, \hat{\theta})$
  is bounded by a term of order $|O_p(1)|$
  for every 
  $(g, l) \in 
    \{1, \dots, G+1\}
    \times 
    \{1, \dots, 2(m+1)\}
  $.
  If 
  $\sqrt{T}
   \|\hat{\beta}_{gj} - \beta_{gj}^{0}\|
   \ge 
   M
  $
  for some group and regime $(g,j)$
  and 
  for some $M>0$,
  then 
  there is a corresponding sub-interval 
  $[\tau_{g,l-1}+1, \tau_{gl}]$
  with 
  $\tau_{gl} - \tau_{g,l-1} > \delta T$
  and thus
  Property \ref{property:optT}(a) implies that 
  $\bar{\ell}_{g,l}(\hat{\mathcal{K}}, \hat{\theta}) \le  - |O_p(M^2)|$
  for a sufficiently large $M$. 
  Thus, on the event 
  that 
  $
   \max_{1 \le g \le G}
   \max_{1 \le j \le m+1}
   \|\hat{\beta}_{gj} - \beta_{gj}^{0}\|
   \ge 
   M T^{-1/2}
  $
  for a sufficiently large $M$, 
  Lemma \ref{lemma:decomposition-l} implies that 
  the normalized log likelihood takes negative value 
  with probability approaching 1.
  The same result holds 
  when 
  $  
   \max_{1 \le j \le m+1}
   \|\hat{\Sigma}_{j} - \Sigma_{j}^{0}\|
   \ge 
   M T^{-1/2}
  $
  for a sufficiently large $M$.
\end{proof}
\vspace{0.1cm}


Having established the convergence rates of the estimates, we are now in a position to 
prove results about the asymptotic independence of the break date estimates 
and the estimates of the basic parameters.
In order to proceed, we let the likelihood based on the observations in
the interval $[t_{1},t_{2}] \subset [1, T]$ 
be denoted as
$
  L(t_{1},t_{2}; \mathcal{K}, \theta)=
  \prod_{t=t_{1}}^{t_{2}}f(y_{t}|X_{tT}, \theta _{t,\mathcal{K}}).
$
Then, using the partition $\{[\tau_{l-1}+1, \tau_{l}]\}_{l=1}^{N}$
of an interval $[1,T]$ given $\mathcal{K}$ and $\mathcal{K}^{0}$,
we can express the normalized log likelihood as
\begin{eqnarray*}
  \ell_{T}(\mathcal{K},\theta) 
  =
  \sum_{l=1}^{N}
  \big \{
  \log L(\tau_{l-1}+1,\tau_{l}; \mathcal{K},\theta )  
  -
  \log L(\tau_{l-1}+1,\tau_{l}; \mathcal{K}^{0},\theta^{0})  
  \big \}.
\end{eqnarray*}

\vspace{0.1cm}
\begin{proof}[\textbf{\textrm{Proof of Theorem 2}}.]
  Consider the case where
  $(\mathcal{K}, \theta) \in  \bar\Xi_{M} \times \bar{\Theta}_{M}$
  for a sufficiently large $M$
  with the restriction $R(\theta) =0$.
  By definition, we can write 
  \begin{eqnarray}
    \notag
    &&
    \ell_{T}(\mathcal{K},\theta) 
    -
    \ell_{T}(\mathcal{K}^{0}, \theta)
    -
    \ell_{T}(\mathcal{K}, \theta^{0}) \\
    && 
    \hspace{1cm} 
    \label{eq:vv-1}
    =
    \sum_{l=1}^{N}
    \big \{
    \log L(\tau_{l-1}+1,\tau_{l};\mathcal{K},\theta )
    -
    \log L(\tau_{l-1}+1,\tau_{l};\mathcal{K}^{0},\theta)
    \big \} \\
    && 
    \hspace{1.5cm} 
    \label{eq:vv-2}
    -
    \sum_{l=1}^{N}
    \big \{
    \log L(\tau_{l-1}+1,\tau_{l}; \mathcal{K},\theta^{0})
    -
    \log L(\tau_{l-1}+1,\tau_{l}; \mathcal{K}^{0},\theta^{0})
    \big \}.
  \end{eqnarray}
  If
  $\tau_{l}  - \tau_{l-1} > M v_{T}^{-2}$, 
  then we have
  $
   \theta_{t, \mathcal{K}} 
   = 
   \theta_{t, \mathcal{K}^{0}} 
  $
  and  
  $
   \theta_{t, \mathcal{K}}^{0}  
   = 
   \theta_{t, \mathcal{K}^{0}}^{0} 
  $
  for all 
  $\tau_{l-1}+1 \le t \le  \tau_{l}$.
  Thus,  
  it suffices to consider 
  the quantities
  in
  (\ref{eq:vv-1})
  and
  (\ref{eq:vv-2})
  with the index 
  $l$ satisfying $\tau_{l} - \tau_{l-1} \le M v_{T}^{-2}$.
  Property \ref{property:smallin} 
  with $b_{T} = M T^{-1/2}$ 
  implies that, uniformly in  
  $(\mathcal{K}, \theta) \in  \bar\Xi_{M} \times \bar{\Theta}_{M}$,
  \begin{eqnarray*} 
    \ell_{T}(\mathcal{K},\theta) 
    =
    \ell_{T}(\mathcal{K}, \theta^{0})
    +
    \ell_{T}(\mathcal{K}^{0},\theta) 
    +
    O_p
    \big ( (\sqrt{T} v_{T})^{-1} \big).
  \end{eqnarray*} 
  Hence, we obtain the desired result.
\end{proof}
\vspace{0.1cm}


To derive the limit distribution of the test, we first present a technical lemma, which 
is a direct consequence of Lemma A.1(b).
To this end, we introduce some notation.
For $j=1, \dots, m$,
we define, for $s<0$,
\begin{eqnarray*}
  V_{T, z\eta,j}^{(1)} (-s) 
  :=
  v_{T}
  \sum_{t=T_{j}^{0}+[s v_{T}^{-2}]}^{T_{j}^{0}}
  (z_{t} \otimes \eta_{t})
  \ \ \ \mathrm{and} \ \ \ 
  V_{T, \eta\eta,j}^{(1)} (-s) 
  :=
  v_{T}
  \sum_{t=T_{j}^{0}+[s v_{T}^{-2}]}^{T_{j}^{0}}
  ( \eta _{t}\eta _{t}{}^{\prime }-I_n) ,
\end{eqnarray*}
and, for $s > 0$,
\begin{eqnarray*}
  V_{T, z\eta,j}^{(2)} (s) 
  :=
  v_{T}
  \sum_{t=T_{j}^{0}}^{T_{j}^{0}+[s v_{T}^{-2}]}
  (z_{t} \otimes \eta_{t} )
  \ \ \ \mathrm{and} \ \ \, 
  V_{T, \eta\eta,j}^{(2)} (s) 
  :=
  v_{T}\sum_{t=T_{j}^{0}}^{T_{j}^{0}+[s v_{T}^{-2}]}
  ( \eta _{t}\eta _{t}^{\prime }-I_n ).
\end{eqnarray*}

\begin{lemma}
  \label{lemma:add-1}
Under Assumptions A6-A9 with a sequence $v_{T}$ defined in Assumption A4,
we have, for $j=1, \dots, m$,
\begin{eqnarray*}
  V_{T, z\eta,j}^{(1)} (\cdot) 
  \Rightarrow 
  \mathbb{V}_{z\eta,j}^{(1)} (\cdot) 
  \ \ \ \mathrm{and} \ \ \
  V_{T, z\eta,j}^{(2)} (\cdot) 
  \Rightarrow 
  \mathbb{V}_{z\eta,j}^{(2)} (\cdot), 
\end{eqnarray*}
where the weak convergence is in the space $D[0,\infty )^{nq}$ and 
the Brownian motions $\mathbb{V} _{z\eta,j}^{(1)}(\cdot) $ and $\mathbb{V}_{z\eta,j}^{(2)}(\cdot) $ 
are defined in the main text. 
Furthermore, for $j=1, \dots, m$,
\begin{eqnarray*}
  V_{T, \eta\eta,j}^{(1)} (\cdot) 
  \Rightarrow 
  \mathbb{V} _{\eta \eta, j}^{(1)} (\cdot)
  \ \ \mathrm{and} \ \
  V_{T, \eta\eta,j}^{(2)} (\cdot) 
  \Rightarrow 
  \mathbb{V}_{\eta\eta, j}^{(2)}(\cdot),
\end{eqnarray*}
where the weak convergence is in the space $D[0,\infty )^{n^{2}}$ and 
the $n\times n$ matrices 
$\mathbb{V}_{\eta\eta, j}^{(1)}( \cdot )$ and 
$\mathbb{V}_{\eta\eta, j}^{(2)}( \cdot )$ are Brownian motion defined in the main text.
\end{lemma}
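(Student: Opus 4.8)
The plan is to recognize each of these localized partial-sum processes as a functional-central-limit object on the shrinking time scale $v_{T}^{-2}$ and to invoke Lemma A.1(b) directly, as the statement anticipates. Fix $j \in \{1,\dots,m\}$ and consider first the forward process $V_{T,z\eta,j}^{(2)}(\cdot)$. Setting $\varsigma_{t} := z_{t} \otimes \eta_{t}$ and $m_{T} := v_{T}^{-2}$, I would write $V_{T,z\eta,j}^{(2)}(s) = m_{T}^{-1/2}\sum_{t=T_{j}^{0}+1}^{T_{j}^{0}+[s m_{T}]} \varsigma_{t}$, so that the prefactor $v_{T}$ is precisely the $m_{T}^{-1/2}$ normalization appropriate for an effective sample of length $m_{T}$. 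Since Assumption A4 forces $\sqrt{T}v_{T}/\log T \to \infty$, we have $m_{T} = o(T/(\log T)^{2})$ and in particular $m_{T} = o(\Delta T_{j+1}^{0})$ because $\Delta T_{j+1}^{0} \asymp T$ under Assumption A3; hence for $s$ in any compact set the whole index range lies inside regime $j+1$ with probability approaching one, so the summands inherit the time-series properties of that segment recorded in Assumptions A2 and A7.

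The core step is then an appeal to the strong approximation of Lemma A.1(a) applied to the shifted sequence $\{\varsigma_{T_{j}^{0}+t}\}_{t\ge 1}$, whose covariance normalization converges to $\Omega_{z\eta,j}^{(2)} := \lim_{T\to\infty} \mathrm{var}(\bar{V}_{T,z\eta,j}^{(2)})$, the very matrix entering the main-text definition of $\mathbb{V}_{z\eta,j}^{(2)}$. The approximation supplies a Brownian motion $W$ with covariance $\Omega_{z\eta,j}^{(2)}$ such that $\sum_{t=1}^{[s m_{T}]}\varsigma_{T_{j}^{0}+t} - W([s m_{T}]) = O_{a.s.}(m_{T}^{1/2-\kappa})$ for some $\kappa>0$. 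Multiplying through by $v_{T} = m_{T}^{-1/2}$, the remainder becomes $O_{a.s.}(m_{T}^{-\kappa}) = o(1)$ uniformly on compacta, so $V_{T,z\eta,j}^{(2)}(\cdot)$ is within $o_{p}(1)$ of $m_{T}^{-1/2}W([\,\cdot\, m_{T}])$, which converges by Brownian scaling to $(\Omega_{z\eta,j}^{(2)})^{1/2}W^{*}(\cdot) = \mathbb{V}_{z\eta,j}^{(2)}(\cdot)$. The same argument on the backward window gives $V_{T,z\eta,j}^{(1)}(\cdot)\Rightarrow\mathbb{V}_{z\eta,j}^{(1)}(\cdot)$ with the pre-break long-run variance, and replacing $\varsigma_{t}$ by $\mathrm{vec}(\eta_{t}\eta_{t}'-I_{n})$, which satisfies A2(b) and A7, delivers the two statements for $V_{T,\eta\eta,j}^{(l)}(\cdot)$.

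To upgrade marginal to joint weak convergence with the asserted independence, I would note that the backward process is built from indices strictly below $T_{j}^{0}$ and the forward one from indices strictly above, with overlap of at most $O(1)$ terms; by the strong mixing of Assumption A2 their increments are asymptotically uncorrelated, hence the Gaussian limits $\mathbb{V}_{z\eta,j}^{(1)}$ and $\mathbb{V}_{z\eta,j}^{(2)}$ are independent, producing the two-sided Brownian motions. For distinct $j\ne j'$ the windows are separated by a number of observations of order $T$, so the corresponding processes are asymptotically independent for the same reason, and the mutual independence across the $z\eta$, $\eta\eta$ and purely-$\eta$ families follows from Assumption A9 together with Gaussianity. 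Tightness in $D[0,\infty)^{nq}$ and $D[0,\infty)^{n^{2}}$ is inherited from the Brownian approximation, or else follows from the $(4+\delta)$-moment bound of Assumption A2 via the Hajek-Renyi inequality of Lemma A.2.

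The main obstacle I anticipate is pinning down the limiting covariance when the underlying sequence fails to be weakly stationary within a segment, the case isolated in Assumption A7(ii). Here the strong-approximation route is what keeps the argument clean: the Brownian error $O_{a.s.}(m_{T}^{1/2-\kappa})$ is annihilated after multiplication by $v_{T} = m_{T}^{-1/2}$ regardless of stationarity, while the uniform-in-$\ell$ stabilization of $k^{-1}E[(S_{k,j}(\ell))_{i}(S_{k,j}(\ell))_{s}]$ to $w_{i,s}$ at rate $k^{-\psi}$ guaranteed by A7(ii) is exactly what identifies $\Omega$. The one point demanding care is verifying that this covariance stabilization persists when restricted to the $v_{T}^{-2}$-length localized window rather than the full segment of length $\Delta T_{j+1}^{0}$, which amounts to checking that the uniform bound of A7(ii) holds over the shifted index sets $\ell = T_{j}^{0} + O(v_{T}^{-2})$; the assumption is stated uniformly in $\ell$ precisely to cover this.
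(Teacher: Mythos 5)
Your proposal is correct and follows the same route as the paper, which disposes of this lemma in one line as ``a direct consequence of Lemma A.1(b)'': you simply make explicit the rescaling by the effective sample size $v_{T}^{-2}$, the application of the strong approximation/FCLT of Lemma A.1 to the shifted sequences within a single regime, and the identification of the limiting covariance via Assumption A7. The additional care you take with the non-stationary case A7(ii), the asymptotic independence of the forward and backward pieces, and tightness is exactly the content the paper leaves implicit.
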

\vspace{0.1cm}


\begin{proof}[\textbf{\textrm{Proof of Lemma 1}}.]
  Consider a regime $j \in \{1, \dots, m\}$.
  For 
  $s \in \mathbb{R}$
  and 
  for $\underline{T}_{j}^{0}(s) \le t \le \overline{T}_{j}^{0}(s)$,
  observe that
  \begin{eqnarray*}
    (\Sigma_{t,j+\mathbbm{1}_{\{T_{j}(r) \le t  \} }}^{0} )^{-1}
    =
    \left \{
      \begin{array}{ll}
        (\Sigma_{j+1}^{0})^{-1},
        &
        \mathrm{if} \ T_{j}(r) \le \underline{T}_{j}^{0}(s)\\
        (\Sigma_{j+1}^{0})^{-1}
        -
        \mathbbm{1}_{ \{ T_{j}^{0} < t \le T_{j}(r)\}}
        \{ (\Sigma_{j+1}^{0})^{-1} - (\Sigma_{j}^{0}))^{-1} \},
        & \mathrm{if} \ T_{j}^{0} < T_{j}(r) \le T_{j}^{0}(s)\\
        (\Sigma_{j}^{0})^{-1}
        \hspace{0.4cm} +
        \mathbbm{1}_{ \{ T_{j}(r) < t \le T_{j}^{0} \}}
        \{ 
        (\Sigma_{j+1}^{0})^{-1} - (\Sigma_{j}^{0})^{-1}
        \},
        & \mathrm{if} \ T_{j}^{0}(s) < T_{j}(r) \le T_{j}^{0}\\
        (\Sigma_{j}^{0})^{-1},
        &
        \mathrm{if} \ \overline{T}_{j}^{0}(s) \le T_{j}(r) , 
      \end{array} 
    \right . 
  \end{eqnarray*}
  which yields
  \begin{eqnarray*}
    (\Sigma_{t,j+\mathbbm{1}_{\{T_{j}(r) \le t  \} }}^{0} )^{-1}
    =
    (\Sigma_{j+\mathbbm{1}_{ \{r \le s \}}}^{0})^{-1}
    -
    \mathrm{sgn}(r)
    \mathbbm{1}_{ \{ |r| \le |s|\} }
    \{
     (\Sigma_{j+1}^{0})^{-1}
     -
     (\Sigma_{j}^{0})^{-1}
    \}.
  \end{eqnarray*}
  Let
  $
    D_{T,j}(s)
    := 
    v_{T}^{2}
    \sum_{t= \underline{T}_{j}^{0}(s)+1}^{\overline{T}_{j}^{0}(s)}
    x_{tT}x_{tT}' 
  $.
  We have,
  for every $\underline{T}_{j}^{0}(s) \le t \le \overline{T}_{j}^{0}(s)$
  and 
  for $r \in \mathbb{R}$,
  \begin{eqnarray*}
    B_{T,j}(s, r)
    &=&
    S'
    D_{T,j}(s) 
    \otimes 
    (\Sigma_{j+\mathbbm{1}_{ \{ r \le s \}}}^{0})^{-1} S\\
    &&
    -
    \mathrm{sgn}(r)
    \mathbbm{1}_{ \{|r| \le |s|\} }
    S'
    D_{T,j}(r)
     \otimes
     \{
     (\Sigma_{j+1}^{0})^{-1}
     -
     (\Sigma_{j}^{0})^{-1}
     \}S,
  \end{eqnarray*}
  since 
  $
    X_{tT}
    (\Sigma_{t,j+\mathbbm{1}_{\{T_{j}(r) \le t  \} }}^{0} )^{-1}
    X_{tT}^{\prime}
    =
    S^{\prime }
    x_{tT}x_{tT}^{\prime }\otimes 
    (\Sigma_{t,j+\mathbbm{1}_{\{T_{j}(r) \le t  \} }}^{0} )^{-1}
    S
  $,
  and also
  \begin{eqnarray}
    \label{eq:approx-asym}
    \varphi(t/T) 
    = 
    \varphi(\lambda_{j}^{0}) 
    + 
    O \big (  (\sqrt{T}v_{T})^{-2} \big)    
    \ \ \ \ \mathrm{and} \ \ \ \
    w_{t} 
    = 
    w_{T_{j}^{0}} 
    +
    O \big (  (\sqrt{T}v_{T})^{-2} \big),
  \end{eqnarray}
  uniformly in $s \in \mathbb{R}$.\footnote{
    We have that 
    $a^{r} - b^{r} 
     = 
     (a - b)
    \sum_{l=0}^{r-1}
    a^{r-1-l} b^{l} 
    $
    for $a, b\in \mathbb{R}$ 
    and 
    for an integer $r \ge 2$.
    It follows that 
    $|(t/T)^{r} - (T_{j}^{0}/T)^{r} |
    \le 
     C
     |
     (t - T_{j}^{0} )/T 
     |
   $.
  }
  Under Assumption A6,
  we can show that,
  uniformly in $s \in \mathbb{R}$,
  \begin{eqnarray*}
    v_{T}^{2}
    \sum_{t = \underline{T}_{j}^{0}(s)+1}^{\overline{T}_{j}^{0}(s)}
    z_{t}
    =
    |s| \mu _{z,j+\mathbbm{1}_{ \{0 < s\}}}
    +
    o_p(1)
    \ \ \ \mathrm{and} \ \ \ 
    v_{T}^{2} 
    \sum_{t = \underline{T}_{j}^{0}(s)+1}^{\overline{T}_{j}^{0}(s)}
    z_{t}z_{t}^{\prime}
    =
    |s| Q_{zz,j+\mathbbm{1}_{ \{0 < s\}}}
    +o_p(1).
  \end{eqnarray*}
  It follows that,
  uniformly in $s \in \mathbb{R}$,
  \begin{equation*}
    D_{T,j}(s)
    =
    |s|
    \left( 
      \begin{array}{ccc}
        Q_{zz, j+\mathbbm{1}_{ \{0 < s\}}} & 
        \mu_{z, j+\mathbbm{1}_{ \{0 < s\}}} \varphi(\lambda_{j}^{0})' & 
        \mu_{z,j+\mathbbm{1}_{ \{0 < s\}}} T^{-1/2}w_{T_{j}^{0}}^{\prime } \\ 
        \varphi(\lambda_{j}^{0}) \mu_{z, j+\mathbbm{1}_{ \{0 < s\}}}^{\prime }  & 
        \varphi(\lambda_{j}^{0})\varphi(\lambda_{j}^{0})'&  
        \varphi(\lambda_{j}^{0})T^{-1/2}w_{T_{j}^{0}}' \\ 
        T^{-1/2}w_{T_{j}^{0}} \mu_{z, j+\mathbbm{1}_{ \{0 < s\}}}^{\prime }        & 
        T^{-1/2}w_{T_{j}^{0}}    \varphi(\lambda_{j}^{0})'& 
        (T^{-1/2}w_{T_{j}^{0}})(T^{-1/2}w_{T_{j}^{0}})^{\prime }        
      \end{array}%
    \right)
    + o_p(1).
  \end{equation*}%

  Also, we have 
  $
    X_{tT}
    (\Sigma_{t,j+\mathbbm{1}_{\{T_{j}(r) \le t  \} }}^{0} )^{-1}
    u_{t}
   = 
   S'
   \big ( 
   I \otimes 
   (\Sigma_{t,j+\mathbbm{1}_{\{T_{j}(r) \le t  \} }}^{0} )^{-1}
   \big )
   (x_{tT}\otimes u_{t})    
  $
  and
  $u_{t} = (\Sigma _{j+\mathbbm{1}\{0 < s\}}^{0})^{1/2}\eta _{t}$.
  Thus, 
  for $\underline{T}_{j}^{0}(s) \le t \le \overline{T}_{j}^{0}(s)$,
  \begin{eqnarray*}
    W_{T, j}(s, r)
    &=&
    S'
    \big (
    I_{q} \otimes (\Sigma_{j+\mathbbm{1}_{ \{r \le s \}}}^{0})^{-1}
    \big )
    V_{T, j}(s) \\
    &&
    -
    \mathrm{sgn}(r)
    \mathbbm{1}_{ \{ |r| \le |s|\} }
    S'
    \big (
     I_{q} \otimes
     \{
     (\Sigma_{j+1}^{0})^{-1}
     -
     (\Sigma_{j}^{0})^{-1}
     \}
    \big )
    V_{T,j}(r),
  \end{eqnarray*}
  where
  $
    V_{T, j}(s)
    :=
    \big (
    I_{q} 
    \otimes 
    (\Sigma_{j+\mathbbm{1}\{0 < s\}}^{0})^{1/2}
    \big )
    v_{T}
    \sum_{t=\underline{T}_{j}^{0}(s) +1}^{\overline{T}_{j}^{0}(s)}
    (x_{tT} \otimes \eta_{t})
  $.
  It follows from (\ref{eq:approx-asym}) that 
  \begin{equation*}
    v_{T}
    \sum_{t=\underline{T}_{j}^{0}(s) +1}^{\overline{T}_{j}^{0}(s)}
    (x_{tT} \otimes \eta_{t})
    =
    \Bigg( 
      v_{T}
      \sum_{t= \underline{T}_{j}^{0}(s)+1}^{\overline{T}_{j}^{0}(s)}
      (z_{t}\otimes \eta_{t})',
      \Big (
      \varphi(\lambda_{j}^{0})',
      T^{-1/2}
      w_{T_{j}^{0}}'
      \Big )
      \otimes
      v_{T}
      \sum_{t= \underline{T}_{j}^{0}(s)+1}^{\overline{T}_{j}^{0}(s)}
      \eta _{t}'
    \Bigg)'
    +
    o_p(1),
  \end{equation*}
  uniformly in $s \in \mathbb{R}$.
  Hence, 
  Lemma \ref{lemma:add-1} with the continuous mapping theorem yields 
  $\{B_{T, j}(\cdot), W_{T, j}(\cdot)\}_{j=1}^{m} 
   \Rightarrow 
   \{\mathbb{B}_{j}(\cdot), \mathbb{W}_{j}(\cdot)\}_{j=1}^{m}
  $.
\end{proof}
\vspace{0.1cm}


\begin{proof}[\textbf{\textrm{Proof of Theorem \ref{theorem:limit-dist}}}.]
    
  Theorems \ref{theorem:rate} and \ref{theorem:parts} imply that,
  for a sufficiently large $M>0$,
  \begin{eqnarray}
    \label{eq:CB-part}
    CB_{T}
    = 
    2
    \Big \{
    \sup_{\mathcal{K} \in \bar\Xi_{M} }
    \ell_{T}(\mathcal{K},\theta^{0})
    -
    \sup_{\mathcal{K} \in \bar\Xi_{M, H_{0}}}
    \ell_{T}(\mathcal{K},\theta^{0})
    \Big \}
    +
    o_{p}(1).
  \end{eqnarray}
  Let $M$ be an arbitrary large constant.
  For 
  $(g, j) \in \{1, \dots, G\} \times \{1, \dots, m\}$,
  define 
  $\bm{r}_{j}:=(r_{1j}, \dots, r_{Gj})'$
  with $r_{gj} \in [-M, M]$
  and 
  consider $\mathcal{K} \in \bar\Xi_{M}$
  such that 
  $k_{gj} = T_{j}^{0} + [r_{gj}v_{T}^{-2}]$.
  Then, we can write 
  $\underline{k}_{j}{=}T_{j}^{0} + \min \{[r_{1j}v_{T}^{-2} ],\dots, [r_{Gj}v_{T}^{-2} ], 0\}$
  and 
  $\overline{k}_{j}{=}T_{j}^{0} + \max \{[r_{1j}v_{T}^{-2} ],\dots, [r_{Gj}v_{T}^{-2} ], 0\}$.
  Also,
  $
    \ell_{T}(\mathcal{K},\theta^{0})
    {=}
    \sum_{j=1}^{m} \ell_{T}^{(j)}(\mathbf{r}_{j})
  $,
  where 
  $
  \ell_{T}^{(j)}(\mathbf{r}_{j})
  {:=}
    \sum_{\underline{k}_{j}+1}^{\bar{k}_{j}}
    \big\{ 
      \log f(y_{t}|X_{tT},\theta_{t,\mathcal{K}}^{0})
      -
      \log f(y_{t}|X_{tT},\theta_{t,\mathcal{T}^{0}}^{0})
    \big\}   
  $.
  Observe that,
  for $1 \le t \le T$,
  \begin{eqnarray*} 
    \log f(y_{t}|X_{tT},\theta _{t,\mathcal{K}}^{0})
    &=&
    -
    \frac{1}{2}
    \Big \{
    \log (2 \pi)^{n}
    +
    \log |\Sigma _{t,\mathcal{K}}^{0}|
    +
    \| (\Sigma _{t,\mathcal{K}}^{0})^{-1/2}u_{t} \|^{2} 
     \\ 
    &&  \hspace{2cm}
    -
    2
    (\Delta \beta _{t,\mathcal{K}}^{0})^{\prime}
    X_{tT}
    (\Sigma _{t,\mathcal{K}}^{0})
    u_{t}
    +
    \| (\Sigma _{t,\mathcal{K}}^{0})^{-1/2}
       X_{tT}^{\prime }\Delta \beta _{t,\mathcal{K}}^{0})
    \|^{2}
    \Big \}.
  \end{eqnarray*}
  Let
  $\underline{k}_{Gj}{:=}T_{j}^{0}{+}\min \{[r_{Gj}v_{T}^{-2} ], 0\}$
  and
  $\overline{k}_{Gj}{:=}T_{j}^{0}{+}\max \{[r_{Gj}v_{T}^{-2} ], 0\}$
  for $j \in \{1, \dots, m\}$.
  Then,
  \begin{eqnarray*}
    \ell_{T}^{(j)}(\bm{r}_{j})
    =
    \ell_{T, 1}^{(j)}(\bm{r}_{j})
    +
    \ell_{T, 2}^{(j)}(\bm{r}_{j}),  
  \end{eqnarray*}
  where 
  \begin{eqnarray*}
    \ell_{T, 1}^{(j)}( \bm{r}_{j})
    &:=&
    \frac{1}{2}
    \sum_{t=\underline{k}_{Gj}+1}^{ \overline{k}_{Gj}}
    \Big \{
    \log
    \big | 
    \Sigma_{t, \mathcal{T}^{0}}^{0}
    (\Sigma_{t, \mathcal{K}}^{0})^{-1}
    \big |
    +
    \mathrm{tr}
    \big (
    \big \{
      (\Sigma _{t,\mathcal{T}^{0}}^{0})^{-1}
      -
      (\Sigma _{t,\mathcal{K}}^{0})^{-1}
    \big \} 
    u_{t}u_{t}^{\prime } 
    \big )
    \Big \},\\
    \ell_{T,2}^{(j)}(\bm{r}_{j}) 
    &:=& 
    \frac{1}{2}
    \sum_{t=\underline{k}_{j}+1}^{\overline{k}_{j}}
    \big \{
    2 
    (\Delta \beta _{t, \mathcal{K}}^{0})'
    X_{tT}(\Sigma_{t,\mathcal{T}^{0}}^{0})^{-1}u_{t}
    -
    \| 
    (\Sigma_{t,\mathcal{T}^{0}}^{0})^{-1/2}
    X_{tT}'\Delta \beta _{t,\mathcal{K}}^{0}\| ^{2}
    \big \}.
  \end{eqnarray*}

  First, we consider the term $\ell_{T,1}^{(j)}(\bm{r}_{j})$.
  We can write 
  $
    \Sigma_{t, \mathcal{T}^{0}}^{0}
    (\Sigma_{t, \mathcal{K}}^{0})^{-1}
    =
    I_{n}
    -
    (
    \Sigma_{t, \mathcal{K}}^{0}
    -
    \Sigma_{t, \mathcal{T}^{0}}^{0}
    )
    (\Sigma_{t, \mathcal{K}}^{0})^{-1}
  $
  and 
  $\Sigma _{t,\mathcal{K}}^{0} - \Sigma _{t,\mathcal{T}^{0}}^{0}=v_{T}\Phi_{t,\mathcal{K}}$,
  where 
  $\Phi_{t, \mathcal{K}} = \Phi_{j}$
  if 
  $k_{Gj} < t \le T_{j}^{0}$
  and 
  $\Phi_{t, \mathcal{K}} = - \Phi_{j}$
  if 
  $T_{j}^{0} < t \le k_{Gj}$.
  Thus,
  an application of the Taylor series expansion yields that, 
  for 
  $\underline{k}_{Gj}\le t \le \overline{k}_{Gj}$,
  \begin{eqnarray}
    \label{eq:lr1-part-1}
    \log 
    |
    \Sigma_{t, \mathcal{T}^{0}}^{0}
    (\Sigma_{t, \mathcal{K}}^{0})^{-1}
    |
    =
    \mathrm{tr}
    \big (
      - v_{T}\Phi _{t,\mathcal{K}}(\Sigma _{t,\mathcal{K}}^{0})^{-1}
    \big )
    +
    \frac{1}{2}
    \mathrm{tr}
    \big (
    \big \{
    v_{T}\Phi _{t,\mathcal{K}}(\Sigma _{t,\mathcal{K}}^{0})^{-1} 
    \big \}^{2} 
    \big )
    +
    O_{p}(v_{T}^{3}).
  \end{eqnarray}
  Also we can write 
  $
   (\Sigma _{t,\mathcal{T}^{0}}^{0})^{-1}
   -
   (\Sigma _{t,\mathcal{K}}^{0})^{-1}
   =   
   (\Sigma_{t,\mathcal{T}^{0}}^{0})^{-1}
   ( 
   \Sigma_{t,\mathcal{K}}^{0}
   -
   \Sigma _{t,\mathcal{T}^{0}}^{0}
   ) 
   (\Sigma _{t, \mathcal{K}}^{0})^{-1}
  $
  and 
  $u_{t} 
   = 
   (\Sigma _{t, \mathcal{T}^{0}}^{0})^{1/2}
   \eta_{t}
   $, which implies,   
  for 
  $\underline{k}_{Gj}\le t \le \overline{k}_{Gj}$,
  \begin{eqnarray}
    \label{eq:lr1-part-2}
    \mathrm{tr}
    \big (
    \big \{
      (\Sigma _{t,\mathcal{T}^{0}}^{0})^{-1}
      -
      (\Sigma _{t,\mathcal{K}}^{0})^{-1}
    \big \} 
    u_{t}u_{t}^{\prime } 
    \big )
    =
    \mathrm{tr}
    \Big (
    ( \Sigma _{t,\mathcal{T}^{0}}^{0} )^{-1/2}
    v_{T}\Phi _{t,\mathcal{K}}
    ( \Sigma _{t,\mathcal{K}}^{0} )^{-1}
    ( \Sigma _{t,\mathcal{T}^{0}}^{0} )^{1/2}
    \eta_{t}\eta_{t}^{\prime }
    \Big ).
  \end{eqnarray}
  For 
  $\underline{k}_{Gj} \le t \le \overline{k}_{Gj}$,
  we have 
  \begin{eqnarray*}
    (\Phi_{t,\mathcal{K}}, \Sigma_{t, \mathcal{T}^{0}}^{0}, \Sigma_{t, \mathcal{K}}^{0})    
    =
    \left \{
    \begin{array}{rl}
      (\Phi_{j}, \Sigma_{j}^{0}, \Sigma_{j+1}^{0}), & \mathrm{if} \ r_{Gj} \le 0\\     
      (-\Phi_{j}, \Sigma_{j+1}^{0}, \Sigma_{j}^{0}), & \mathrm{if} \ r_{Gj} > 0.
    \end{array}
    \right .
  \end{eqnarray*}
  Using 
  (\ref{eq:lr1-part-1})
  and 
  (\ref{eq:lr1-part-2})
  with 
  $ \pi_{j}(r_{Gj}):=
    (\Sigma _{t,\mathcal{T}^{0}}^{0}
    )^{-{1}/{2}}
    \Phi _{t,\mathcal{K}}
    (  
    \Sigma _{t,\mathcal{K}}^{0}
    )^{-1}
    (\Sigma _{t,\mathcal{T}^{0}}^{0})^{{1}/{2}}
  $,
  we obtain 
  \begin{eqnarray}
   \label{eq:LR1-part1} 
   \ell_{T,1}^{(j)}(\bm{r}_{j})
    =
    \frac{1}{2}
    \mathrm{tr}
    \Big (
    \pi_{j}(r_{Gj}) 
    V_{T,\eta\eta, j}(r_{Gj}) 
    \Big )
    +  
    \frac{|r_{Gj}|}{4}
    \mathrm{tr}
    \big (
     \{\pi_{j}(r_{Gj})\}^2
    \big )
    +o_{p}(1),
  \end{eqnarray}
  where
  $ V_{T,\eta\eta, j}(r_{Gj}):=v_{T}
    \sum_{t = \underline{k}_{Gj}+1}^{\overline{k}_{Gj}}
    (\eta _{t}\eta_{t}^{\prime }-I_{n})$.

  Next, we consider the term $\ell_{T,2}^{(j)}(\bm{r}_{j})$.
  Define 
  $
   \Delta \beta _{g,t,\mathcal{K}}^{0}
   :=
   \sum_{i \in \mathcal{G}_{g}} e_{i} \circ (\beta _{t,\mathcal{K}}^{0}
   -
   \beta _{t,\mathcal{T}^{0}}^{0})$. 
  Then
  $\Delta \beta _{t,\mathcal{K}}^{0}
  =
  \sum_{g=1}^{G}\Delta \beta _{g,t,\mathcal{K}}^{0}$
  and 
  we have 
  \begin{eqnarray*}
    \ell_{T,2}^{(j)}(\bm{r}_{j})
    =
    \sum_{t=\underline{k}_{j}+1}^{\overline{k}_{j}}
    \bigg (
    \sum_{g=1}^{G}
    (\Delta \beta_{g,t,\mathcal{K}}^{0})' 
    X_{tT}(\Sigma_{t,\mathcal{K}}^{0})^{-1}u_{t} 
    -
    \frac{1}{2}
    \sum_{g=1}^{G}
    \sum_{l=1}^{G}
    (\Delta \beta _{g,t,\mathcal{K}}^{0})
    'X_{tT}(\Sigma_{t,\mathcal{K}}^{0})^{-1}X_{tT}'
    \Delta \beta _{l,t,\mathcal{K}}^{0}
    \bigg ) .
  \end{eqnarray*} 
  For a group $g \in \{1, \dots, G\}$,
  we have that 
  $\Delta \beta_{g,t,\mathcal{K}}^{0} 
   = 
   \beta_{g, j+1}^{0}
   - 
   \beta_{g j}^{0}
  $
  for 
  $k_{gj} < t \le  T_{j}^{0}$
  and 
  that 
  $\Delta \beta_{g,t,\mathcal{K}}^{0} 
   = 
   -
   (
   \beta_{g, j+1}^{0}
   - 
   \beta_{g j}^{0}
   )
  $
  for
  $T_{j}^{0} < t \le  k_{gj}$.
  It follows that 
  \begin{eqnarray*}
    \sum_{t=\underline{k}_{j}+1}^{\overline{k}_{j}}
    (\Delta \beta_{g,t,\mathcal{K}}^{0})' 
    X_{tT}(\Sigma_{t,\mathcal{K}}^{0})^{-1}u_{t} 
    =
    -
    \mathrm{sgn}(r_{gj})
    \delta_{gj}'
    W_{T,j}(r_{gj}, r_{Gj}).
  \end{eqnarray*}
  Similarly, 
  for groups $g, h \in \{1, \dots, G\}$,
  we have that
  \begin{eqnarray*}
    &&
    \sum_{t=\underline{k}_{j}+1}^{\overline{k}_{j}}
    (\Delta \beta _{g,t,\mathcal{K}}^{0})
    'X_{tT}(\Sigma_{t,\mathcal{K}}^{0})^{-1}X_{tT}'
    \Delta \beta _{h,t,\mathcal{K}}^{0} \\
    && \ \ \ =  
    \mathbbm{1}_{ \{k_{gj} \vee k_{hj} \le T_{j}^{0} \}  }
    \delta_{gj}'
    B_{T,j}(r_{gj}\vee r_{hj}, r_{Gj} )
    \delta_{h j}
    +
    \mathbbm{1}_{ \{ T_{j}^{0} < k_{gj} \wedge k_{hj}  \}  }
    \delta_{gj}'
    B_{T,j}(r_{gj} \wedge  r_{hj}, r_{Gj} )
    \delta_{h j}.
  \end{eqnarray*}
  Thus, we have 
  \begin{eqnarray*}
    \ell_{T,2}^{(j)}(\bm{r}_{j})
    &=&
    -
    \sum_{g=1}^{G}
    \mathrm{sgn}(r_{gj})
    \delta_{gj}'
    W_{T,j}(r_{gj}, r_{Gj}) \\
    &&  
    -
    \frac{1}{2}
    \sum_{g=1}^{G}
    \sum_{l=1}^{G}
    \delta_{gj}' 
    \Big \{
    \mathbbm{1}_{ \{r_{gj} \vee r_{lg} \le  0\}}
    B_{T,j}
    \big ( r_{gj}{\vee}r_{lj}, r_{Gj} \big )
    +
    \mathbbm{1}_{ \{0 < r_{gj} \wedge r_{lg} \}}
    B_{T, j}
    \big ( r_{gj}{\wedge}r_{lj}, r_{Gj} \big )
    \Big \}
    \delta_{lj}.  
  \end{eqnarray*}

  Applying Lemma 1 
  with 
  (\ref{eq:LR1-part1}) and the above equation, we can obtain 
  \begin{eqnarray*}
    \big (
    \ell_{T}^{(1)}(\bm{r}_{1}), \dots, \ell_{T}^{(m)}(\bm{r}_{m}) 
    \big )
    \Rightarrow
    \big (
    \ell_{\infty}^{(1)}(\bm{r}_{1}), \dots, \ell_{\infty}^{(m)}(\bm{r}_{m}) 
    \big ),
  \end{eqnarray*}
  where, for $j=1,\dots, m$,
  \begin{eqnarray*}
    \ell_{\infty}^{(j)}(\bm{r}_{j})
    &:=&
    \frac{1}{2}
    \mathrm{tr}
    \Big (
    \pi_{j}(r_{Gj})
    \mathbb{V}_{\eta\eta,j}(r_{G})
    \Big ) 
     +
    \frac{|r_{Gj}|}{4}
    \mathrm{tr}
    \Big (
    \big \{
    \pi_{j}(r_{Gj})
    \big \}^2
    \Big ) 
     -
    \sum_{g=1}^{G}
    \mathrm{sgn}(r_{gj})
    \delta_{gj}' \mathbb{W}_{j}(r_{gj}, r_{Gj})
    \\
    && 
    -
    \frac{1}{2}
    \sum_{g=1}^{G}
    \sum_{h=1}^{G}
    \delta_{gj}' 
    \Big \{
    \mathbbm{1}_{ \{r_{gj} \vee r_{hg} \le  0\}}
    \mathbb{B}_{j}
    \big ( r_{gj}{\vee}r_{hj}, r_{Gj} \big )
    +
    \mathbbm{1}_{ \{0 <  r_{gj} \wedge r_{hg} \}}
    \mathbb{B}_{j}
    \big ( r_{gj}{\wedge}r_{hj}, r_{Gj} \big )
    \Big \}
    \delta_{hj}. 
  \end{eqnarray*}
  Applying a change of variables with 
  $\bm{s}_{j} := 
  \big ( \|\delta_{j}\|^2+ \mathrm{tr}(\Phi_{j}^2) \big ) \bm{r}_{j}$
  with $\bm{s}_{j}=(s_{1},\dots, s_{m})'$
  for $j=1, \dots, m$, 
  we can show that 
  $
  2
  \ell_{\infty}^{(j)}(\bm{r}_{j})
  =
  CB_{\infty}^{(j)}(\bm{s}_{j})
  $
  for all $j =1, \dots, m$.
  Thus, 
  the continuous mapping theorem leads to the desired result.
\end{proof}
\vspace{0.1cm}


\begin{proof}[\textbf{\textrm{Proof of Theorem \ref{theorem:alternative}}}.]
  Under both alternatives $H_{1}$ and $H_{1T}$, 
  the convergence rates of Theorem \ref{theorem:rate}
  apply to the estimates $\hat{\theta}$ and $\hat{\mathcal{K}}$. 
  Thus, given collections of break dates $\hat{\mathcal{K}}$ and $\mathcal{K}^{0}$,
  the sub-intervals $\{[\tau_{g,l-1}+1, \tau_{gl}]\}_{l=1}^{N_{g}}$
  for each group $g$ 
  satisfy 
  either 
  $\tau_{gl} - \tau_{g,l-1} \ge \nu T$
  or 
  $\tau_{gl} - \tau_{g,l-1} \le M v_{T}^{-2}$
  for some $M>0$.
  If $\tau_{gl} - \tau_{g,l-1} \ge \nu T$, 
  then 
  the arguments used to prove 
  Property \ref{property:optT}(b) 
  with $\sqrt{T}$-consistent estimate $\hat{\theta}$
  show that 
  $\bar{\ell}_{g,l}(\hat{\mathcal{K}}, \hat{\theta}) = O_p(1)$,
  while 
  the arguments to obtain (\ref{eq:th-3})
  show that 
  $\bar{\ell}_{g,l}(\hat{\mathcal{K}}, \hat{\theta}) = O_p(1)$
  if $\tau_{gl} - \tau_{gl} \le M v_{T}^{-2}$.
  Also, 
  Theorem \ref{theorem:rate}(b) implies that 
  $\Delta
  (\hat{\mathcal{K}}, \hat{\theta})
  = o_p(1)
  $.
  It follows from 
  Lemma \ref{lemma:decomposition-l}  that 
  \begin{eqnarray}
    \label{eq:L-H1}
    \ell_{T}(\hat{\mathcal{K}}, \hat{\theta}) = O_p(1).
  \end{eqnarray}
  It remains to consider the normalized likelihood 
  $\ell_{T}(\tilde{\mathcal{K}}, \tilde{\theta})$
  under the null hypothesis $H_{0}$. 
  
  \textbf{(a)}
  Let $\delta \in (0,1)$ be fixed.
  If
  $
  \max_{1 \le j \le m}
  \max_{1 \le g_{1}, g_{2}\le G}
  |k_{g_{1}j}^{0} - k_{g_{2}j}^{0}| \ge \delta T$,
  then we have 
  $
  \max_{1 \le j \le m}
  \max_{1 \le g \le G}
  |\tilde{k}_{j} - k_{gj}^{0}| \ge \delta T / 2
  $.
  Applying a similar argument used in Proposition \ref{pro:date1},
  we can show that 
  Properties \ref{property:opt1} 
  and 
  \ref{property:smallout} with $m_{T} = \delta T / 2$
  imply that
  \begin{eqnarray}
    \label{eq:L-H0}
    \ell_{T}(\tilde{\mathcal{K}}, \tilde{\theta}) 
    \le   
    -
    |O_p(T v_{T}^{2})|.
  \end{eqnarray}
  It follows from 
  (\ref{eq:L-H1})
  and 
  (\ref{eq:L-H0})
  that 
  $CB_{T}
   = 
   2
   \{
   \ell_{T}(\hat{\mathcal{K}}, \hat{\theta})   
   -
   \ell_{T}(\tilde{\mathcal{K}}, \tilde{\theta})
   \}
   \ge 
   |O_p(T v_{T}^{2})|
  $.
  Since the critical value $c_{\alpha}^{\ast}$ is a finite value, 
  we obtain the desired result.

  \textbf{(b)}
  If
  $
  \max_{1 \le j \le m}
  \max_{1 \le g_{1}, g_{2}\le G}
  |k_{g_{1}j}^{0} - k_{g_{2}j}^{0}| \ge M v_{T}^{-2} $
  for some constant $M>0$,
  then we have 
  $
  \max_{1 \le j \le m}
  \max_{1 \le g \le G}
  |\tilde{k}_{j} - k_{gj}^{0}| \ge  M v_{T}^{-2} / 2
  $.
  When 
  $
  \max_{1 \le j \le m}
  \max_{1 \le g \le G}
  |\tilde{k}_{j} - k_{gj}^{0}| \ge  D v_{T}^{-2} \log T
  $
  for a sufficiently large $D$,
  it was shown that 
  $
    \ell_{T}(\tilde{\mathcal{K}}, \tilde{\theta})
    \le  
    - |O_p(M)|
  $
  in the proof of Proposition \ref{pro:date1}.
  When
  $
  M v_{T}^{-2} 
  \le 
  \max_{1 \le j \le m}
  \max_{1 \le g \le G}
  |\tilde{k}_{j} - k_{gj}^{0}| \le  D v_{T}^{-2} \log T
  $, 
  it follows from the proof of Theorem \ref{theorem:rate}(a)
  that 
  $
    \ell_{T}(\tilde{\mathcal{K}}, \tilde{\theta})
    \le  
    - |O_p(M)|
  $
  for a sufficiently large $M>0$.
  Thus,
  there is some $M>0$
  such that 
  $CB_{T}
   \ge 
   |O_p(M)|
  $
  and 
  the proof is completed.
\end{proof}
 

\clearpage
\thispagestyle{empty}

\setstretch{0.1}      

\fontsize{9pt}{12pt}\selectfont

\addtolength{\evensidemargin}{-.410in}
\addtolength{\topmargin}{-0.90in}  
\addtolength{\textheight}{2.000in}

\vspace{3cm}

\begin{table}[htbp]
\centering
Table 1. Empirical Rejection Frequencies under the Null Hypotheses
\begin{tabular}{cccccccccccccc} \hline \hline
& & \multicolumn{11}{c}{AR Coefficient} &  \\ 
\cline{4-14}
& & &
\multicolumn{3}{c}{$\alpha=0.0$}&  & 
\multicolumn{3}{c}{$\alpha=0.4$}&  &
\multicolumn{3}{c}{$\alpha=0.8$}\\ 
\cline{4-6} \cline{8-10} \cline{8-10} \cline{12-14}
\multicolumn{2}{c}{Break Size}
&&
\multicolumn{3}{c}{Nominal Size}&  &
\multicolumn{3}{c}{Nominal Size}&  & 
\multicolumn{3}{c}{Nominal Size} \\ 
\cline{1-2} \cline{4-6} \cline{8-10} \cline{8-10} \cline{12-14}
$\delta_1$ & $\delta_2$ &  & 
10\% & 5\% & 1\% &  & 
10\% & 5\% & 1\% &  & 
10\% & 5\% & 1\% \\ \hline
0.50 & 0.50 &  & 0.064 & 0.036 & 0.004 &  & 0.086 & 0.050 & 0.004 &  & 0.162 &0.104 & 0.032 \\ 
 & 0.75 &  & 0.070 & 0.036 & 0.004 &  & 0.094 & 0.054 & 0.006 &  & 0.158 &0.088 & 0.032 \\ 
 & 1.00 &  & 0.084 & 0.036 & 0.004 &  & 0.106 & 0.060 & 0.010 &  & 0.170 &0.098 & 0.038 \\ 
 & 1.25 &  & 0.086 & 0.044 & 0.004 &  & 0.108 & 0.058 & 0.014 &  & 0.182 &0.104 & 0.040 \\ 
 & 1.50 &  & 0.096 & 0.050 & 0.006 &  & 0.120 & 0.056 & 0.010 &  & 0.186 &0.108 & 0.036 \\ 
0.75 & 0.75 &  & 0.084 & 0.032 & 0.004 &  & 0.112 & 0.046 & 0.004 &  & 0.158 &0.086 & 0.030 \\ 
 & 1.00 &  & 0.088 & 0.040 & 0.004 &  & 0.108 & 0.050 & 0.010 &  & 0.154 &0.082 & 0.030 \\ 
 & 1.25 &  & 0.086 & 0.050 & 0.006 &  & 0.104 & 0.060 & 0.006 &  & 0.156 &0.088 & 0.028 \\ 
 & 1.50 &  & 0.090 & 0.052 & 0.006 &  & 0.118 & 0.058 & 0.010 &  & 0.166 &0.090 & 0.028 \\ 
1.00 & 1.00 &  & 0.090 & 0.044 & 0.008 &  & 0.104 & 0.060 & 0.012 &  & 0.150 &0.078 & 0.022 \\ 
 & 1.25 &  & 0.086 & 0.050 & 0.010 &  & 0.090 & 0.060 & 0.010 &  & 0.140 &0.072 & 0.026 \\ 
 & 1.50 &  & 0.092 & 0.050 & 0.012 &  & 0.096 & 0.056 & 0.012 &  & 0.152 &0.070 & 0.026 \\ 
1.25 & 1.25 &  & 0.080 & 0.044 & 0.008 &  & 0.084 & 0.052 & 0.012 &  & 0.118 &0.058 & 0.018 \\ 
 & 1.50 &  & 0.074 & 0.042 & 0.010 &  & 0.080 & 0.044 & 0.010 &  & 0.112 &0.056 & 0.018 \\ 
1.50 & 1.50 &  & 0.074 & 0.038 & 0.010 &  & 0.088 & 0.040 & 0.010 &  & 0.106 &0.048 & 0.018 \\ 
\hline
\end{tabular}
\label{}
  \begin{minipage}{140mm}
  \textit{Notes:} The data generating process is 
   the bivariate system:
    \begin{align}
      y_{1t} &= 1 + \delta_1 \mathbbm{1}_{\{k_{1}<t\}} + \alpha y_{1,t-1} + u_{1t} 
       \tag{EQ1} \\
      y_{2t} &= 1 + \delta_2 \mathbbm{1}_{ \{k_{2}<t \}} + \alpha y_{2,t-1} + u_{2t}
       \tag{EQ2}, 
    \end{align}
  for $t=1,\dots, T$,
  where 
  $(u_{1t}, u_{2t})' \sim i.i.d.\hspace{0.5mm}N(0, I_2)$
  and 
  $\delta_i$ is the break size for the $i^{th}$ equation for $i = 1, 2$. 
  We set the sample size $T=100$,
  the break date $k_{1}=k_{2} = 50$
  and 
  the trimming value $\nu = 0.15$.
  \end{minipage}

\end{table}


\begin{table}[htbp]
  \def\arraystretch{1.0}  
  \centering
  Table 2. Empirical Rejection Frequencies under the Null and Alternative Hypotheses  \\
  (the significance level: 5\%)
  \begin{tabular}{ccccccrccrcc} \hline \hline
          &       &       &       & (1)   & (2)   & \multicolumn{1}{c}{} & (3)   & (4)   & \multicolumn{1}{c}{} & (5)   & (6) \\
          &       &       &       & \multicolumn{8}{c}{Break dates ($k_1$, $k_2$)} \\ \cline{5-12}
          &       &       &       & \multicolumn{2}{c}{(50,50)} & \multicolumn{1}{c}{} & \multicolumn{2}{c}{(35, 35)} & \multicolumn{1}{c}{} & \multicolumn{2}{c}{(35, 50)} \\
\cline{5-6} \cline{8-9} \cline{11-12}
          & AR      &   \multicolumn{2}{c}{Break Size}           & \multicolumn{2}{c}{Trimming value} & \multicolumn{1}{l}{} & \multicolumn{2}{c}{Trimming value} & \multicolumn{1}{l}{} & \multicolumn{2}{c}{Trimming value} \\ \cline{5-6} \cline{8-9} \cline{11-12}
    Correlation & $\alpha$    &$\delta_{1}$ & $\delta_{2}$  & 0.15  & 0.10  & \multicolumn{1}{c}{} & 0.15  & 0.10  & \multicolumn{1}{c}{} & 0.15  & 0.10 \\ \hline
    0.0   & 0.0   & 0.5  & 0.5  & 0.024 & 0.030 & \multicolumn{1}{c}{} & 0.018 & 0.030 & \multicolumn{1}{c}{} & 0.05 & 0.06 \\
          &       &       & 1.0  & 0.030 & 0.034 & \multicolumn{1}{c}{} & 0.026 & 0.038 & \multicolumn{1}{c}{} & 0.154 & 0.166 \\
          &       &       & 1.5  & 0.036 & 0.038 & \multicolumn{1}{c}{} & 0.034 & 0.048 & \multicolumn{1}{c}{} & 0.226 & 0.228 \\
          &       & 1.0  & 1.0  & 0.032 & 0.034 & \multicolumn{1}{c}{} & 0.048 & 0.028 & \multicolumn{1}{c}{} & 0.550 & 0.554 \\
          &       &       & 1.5  & 0.036 & 0.038 & \multicolumn{1}{c}{} & 0.022 & 0.022 & \multicolumn{1}{c}{} & 0.728 & 0.730 \\
          &       & 1.5  & 1.5  & 0.034 & 0.034 & \multicolumn{1}{c}{} & 0.012 & 0.012 & \multicolumn{1}{c}{} & 0.932 & 0.932 \\
          & 0.4   & 0.5  & 0.5  & 0.036 & 0.044 & \multicolumn{1}{c}{} & 0.026 & 0.040 & \multicolumn{1}{c}{} & 0.064 & 0.080 \\
          &       &       & 1.0  & 0.038 & 0.050 & \multicolumn{1}{c}{} & 0.040 & 0.056 & \multicolumn{1}{c}{} & 0.182 & 0.188 \\
          &       &       & 1.5  & 0.048 & 0.056 & \multicolumn{1}{c}{} & 0.036 & 0.050 & \multicolumn{1}{c}{} & 0.250 & 0.300 \\
          &       & 1.0  & 1.0  & 0.044 & 0.044 & \multicolumn{1}{c}{} & 0.054 & 0.036 & \multicolumn{1}{c}{} & 0.586 & 0.569 \\
          &       &       & 1.5  & 0.048 & 0.048 & \multicolumn{1}{c}{} & 0.062 & 0.032 & \multicolumn{1}{c}{} & 0.732 & 0.734 \\
          &       & 1.5  & 1.5  & 0.036 & 0.036 & \multicolumn{1}{c}{} & 0.018 & 0.018 & \multicolumn{1}{c}{} & 0.934 & 0.945 \\
          & 0.8   & 0.5  & 0.5  & 0.082 & 0.092 & \multicolumn{1}{c}{} & 0.096 & 0.102 & \multicolumn{1}{c}{} & 0.172 & 0.215 \\
          &       &       & 1.0  & 0.078 & 0.084 & \multicolumn{1}{c}{} & 0.100 & 0.104 & \multicolumn{1}{c}{} & 0.300 & 0.390 \\
          &       &       & 1.5  & 0.090 & 0.104 & \multicolumn{1}{c}{} & 0.178 & 0.096 & \multicolumn{1}{c}{} & 0.370 & 0.445 \\
          &       & 1.0  & 1.0  & 0.068 & 0.068 & \multicolumn{1}{c}{} & 0.080 & 0.082 & \multicolumn{1}{c}{} & 0.668 & 0.710 \\
          &       &       & 1.5  & 0.056 & 0.056 & \multicolumn{1}{c}{} & 0.056 & 0.056 & \multicolumn{1}{c}{} & 0.774 & 0.805 \\
          &       & 1.5  & 1.5  & 0.044 & 0.044 & \multicolumn{1}{c}{} & 0.032 & 0.032 & \multicolumn{1}{c}{} & 0.942 & 0.955 \\ \hline 
    0.5   & 0.0   & 0.5  & 0.5  & 0.018 & 0.022 & \multicolumn{1}{c}{} & 0.020 & 0.026 & \multicolumn{1}{c}{} & 0.106 & 0.106 \\
          &       &       & 1.0  & 0.028 & 0.034 & \multicolumn{1}{c}{} & 0.038 & 0.038 & \multicolumn{1}{c}{} & 0.256 & 0.248 \\
          &       &       & 1.5  & 0.038 & 0.038 & \multicolumn{1}{c}{} & 0.040 & 0.046 & \multicolumn{1}{c}{} & 0.300 & 0.298 \\
          &       & 1.0  & 1.0  & 0.028 & 0.028 & \multicolumn{1}{c}{} & 0.026 & 0.028 & \multicolumn{1}{c}{} & 0.730 & 0.730 \\
          &       &       & 1.5  & 0.036 & 0.036 & \multicolumn{1}{c}{} & 0.030 & 0.030 & \multicolumn{1}{c}{} & 0.826 & 0.828 \\
          &       & 1.5  & 1.5  & 0.020 & 0.020 & \multicolumn{1}{c}{} & 0.020 & 0.020 & \multicolumn{1}{c}{} & 0.978 & 0.978 \\
          & 0.4   & 0.5  & 0.5  & 0.022 & 0.034 & \multicolumn{1}{c}{} & 0.030 & 0.038 & \multicolumn{1}{c}{} & 0.130 & 0.138 \\
          &       &       & 1.0  & 0.044 & 0.044 & \multicolumn{1}{c}{} & 0.032 & 0.038 & \multicolumn{1}{c}{} & 0.262 & 0.268 \\
          &       &       & 1.5  & 0.044 & 0.046 & \multicolumn{1}{c}{} & 0.048 & 0.052 & \multicolumn{1}{c}{} & 0.318 & 0.324 \\
          &       & 1.0  & 1.0  & 0.038 & 0.038 & \multicolumn{1}{c}{} & 0.036 & 0.042 & \multicolumn{1}{c}{} & 0.752 & 0.752 \\
          &       &       & 1.5  & 0.036 & 0.036 & \multicolumn{1}{c}{} & 0.034 & 0.034 & \multicolumn{1}{c}{} & 0.832 & 0.834 \\
          &       & 1.5  & 1.5  & 0.022 & 0.022 & \multicolumn{1}{c}{} & 0.022 & 0.022 & \multicolumn{1}{c}{} & 0.978 & 0.978 \\
          & 0.8   & 0.5  & 0.5  & 0.060 & 0.070 & \multicolumn{1}{c}{} & 0.074 & 0.082 & \multicolumn{1}{c}{} & 0.214 & 0.214 \\
          &       &       & 1.0  & 0.068 & 0.070 & \multicolumn{1}{c}{} & 0.076 & 0.084 & \multicolumn{1}{c}{} & 0.362 & 0.364 \\
          &       &       & 1.5  & 0.062 & 0.064 & \multicolumn{1}{c}{} & 0.068 & 0.074 & \multicolumn{1}{c}{} & 0.396 & 0.400 \\
          &       & 1.0  & 1.0  & 0.046 & 0.046 & \multicolumn{1}{c}{} & 0.052 & 0.056 & \multicolumn{1}{c}{} & 0.778 & 0.776 \\
          &       &       & 1.5  & 0.044 & 0.044 & \multicolumn{1}{c}{} & 0.042 & 0.044 & \multicolumn{1}{c}{} & 0.838 & 0.838 \\
          &       & 1.5  & 1.5  & 0.026 & 0.026 & \multicolumn{1}{c}{} & 0.026 & 0.026 & \multicolumn{1}{c}{} & 0.978 & 0.978 \\ \hline
    \end{tabular}
  \begin{minipage}{140mm}
    \textit{Notes:} 
    The data generating process is the bivariate system
    as in (EQ1) and (EQ2) of Table 1
    and 
    standard normal errors $(u_{1t}, u_{2t})'$ 
    are either 
    uncorrelated 
    or 
    correlated with 
    $cov(u_{1t}, u_{2t}) = 0.5$. 
    The number of observations $T$ is set to 100. 
    Columns (1)-(4) report 
    empirical size at a 5\% nominal level
    and 
    Columns (5)-(6) show 
    empirical power 
    given 
    break dates $(k_{1}, k_{2}) = (35, 50)$
    and 
    critical values at a 5\% significance level.
    The AR coefficient $\alpha$ is set to 0.0, 0.4 and 0.8.
    We use
    $0.5$, $1.0$ and $1.5$ as magnitude of the break sizes.
  \end{minipage} 
\end{table}%

\newpage

\begin{table}
  \centering
  Table 3. Structural breaks in the U.S. disaggregated inflation series
  \begin{tabular}{lccc} \hline \hline
    \multicolumn{4}{c}{Replication of the results in Clark (2006)} \\\hline
    & \multicolumn{3}{c}{OLS without breaks} \\ \cline{2-4}
    & Durables & Nondurables & Service \\ \cline{2-4}
    Persistency & 0.921  & 0.878  & 0.855  \\ \hline
    & \multicolumn{3}{c}{OLS with common break} \\ \cline{2-4}
    & Durables & Nondurables & Service \\\cline{2-4}
    Persistency & 0.800  & 0.367  & 0.137  \\ \cline{2-4}
    Break Date (Known)& \multicolumn{3}{c}{93:Q1}  \\ \hline
    \multicolumn{4}{c}{Evidence from SUR system} \\\hline
    & \multicolumn{3}{c}{SUR with common breaks ($k_{1} = k_{2} = k_{3}$)} \\ \cline{2-4}
    & Durables & Nondurables & Service \\\cline{2-4} 
    Persistency & 0.805  & 0.356  & 0.166  \\\cline{2-4}
    Break Date & \multicolumn{3}{c}{92:Q1}  \\ \hline
    \multicolumn{4}{c}{Test for common break} \\ \hline
    Null Hypothesis   & LR test &  Critical value (5\%) & \\\cline{2-4}
    $H_{0}: k_{1} = k_{2} = k_{3}$  & 9.015   & 5.242  &  \\
    $H_{0}: k_{1} = k_{2}$         & 9.735   & 3.473 &  \\ 
    $H_{0}: k_{1} = k_{3}$         & 7.684   & 3.259 &  \\
    $H_{0}: k_{2} = k_{3}$         & 0.749   & 2.501 &  \\ \hline
    & \multicolumn{3}{c}{SUR with common break ($k_{2} = k_{3}$)} \\ \cline{2-4}
    & Durables & Nondurables & Service \\ \cline{2-4}
    Persistency  & 0.324  & 0.406  & 0.153  \\ \cline{2-4}
    Break Date   & 95:Q1  & \multicolumn{2}{c}{\hspace{.5cm} 92:Q1} \\
    95\% C.I.         & [94:Q2, 95:Q4]  
    & \multicolumn{2}{c}{\hspace{.5cm}  [91:Q3, 92:Q3]} \\\hline
\end{tabular}
\begin{minipage}{110mm}
  \textit{Notes:}
  The sample period is 1984 to 2002. The estimated model is the AR
  model with the intercept
  and 
  the AR lag length selected by the AIC is 4, 5 or 3 for 
  durables, nondurables or service, respectively. 
  \textit{Persistency} is measured by the sum of AR coefficients. 
  The critical values at the 5\% significance level are  
  obtained through 
  a computationally efficient algorithm
  with 3,000 repetitions.
  C.I. denotes the 95\% confidence interval of the break date.
\end{minipage}

\end{table}

\clearpage
\begin{figure}[htbp]
  \label{fig1}
  \centering 
  Figure 1: Finite-sample power of the test 
  \vspace{0.5cm}

    \caption*{Panel A: AR Coefficient = 0.00}  
    \begin{subfigure}[b]{0.31\textwidth}
        \caption{Break Size in EQ1: 0.5}
        \includegraphics[width=\textwidth]{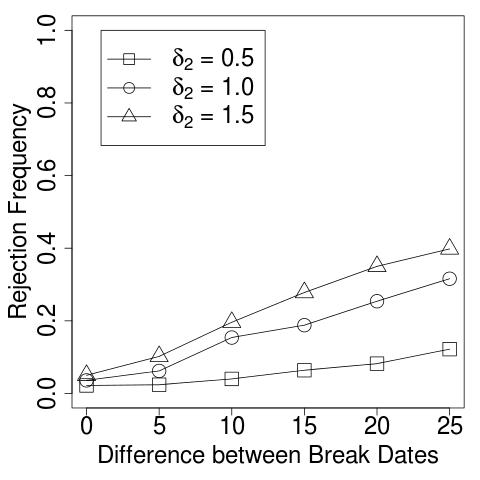}
    \end{subfigure}
    \begin{subfigure}[b]{0.31\textwidth}
        \caption{Break Size in EQ1: 1.0}
        \includegraphics[width=\textwidth]{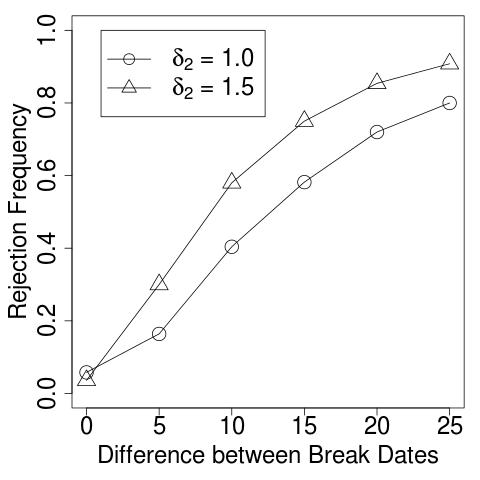}
    \end{subfigure}
    \begin{subfigure}[b]{0.31\textwidth}
        \caption{Break Size in EQ1: 1.5}
        \includegraphics[width=\textwidth]{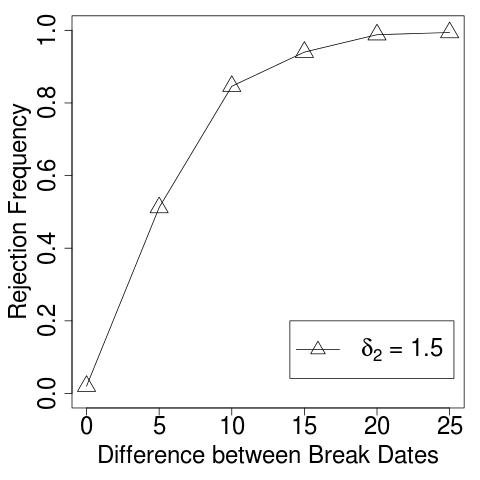}
    \end{subfigure}

    \caption*{Panel B: AR Coefficient = 0.40}  
    \begin{subfigure}[b]{0.31\textwidth}
        \caption{Break Size in EQ1: 0.5}
        \includegraphics[width=\textwidth]{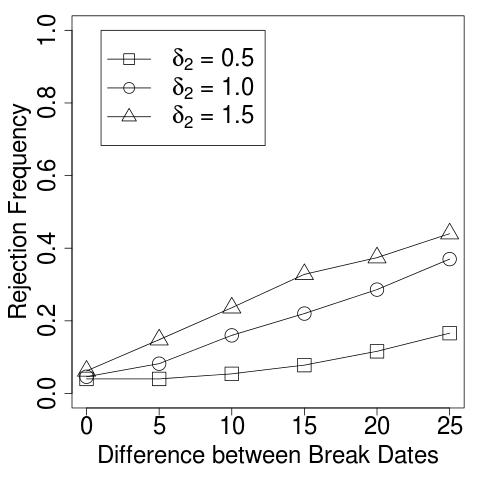}
    \end{subfigure}
    \begin{subfigure}[b]{0.31\textwidth}
        \caption{Break Size in EQ1: 1.0}
        \includegraphics[width=\textwidth]{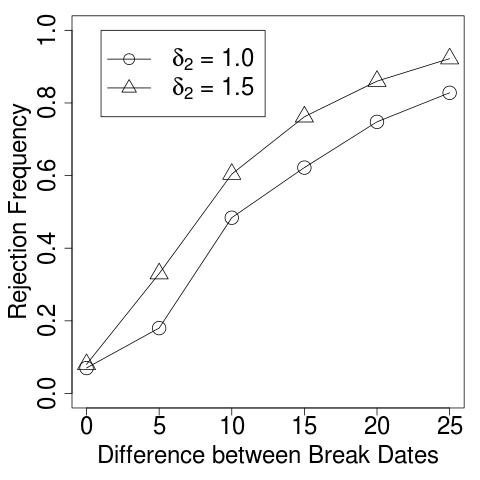}
    \end{subfigure}
    \begin{subfigure}[b]{0.31\textwidth}
        \caption{Break Size in EQ1: 1.5}
        \includegraphics[width=\textwidth]{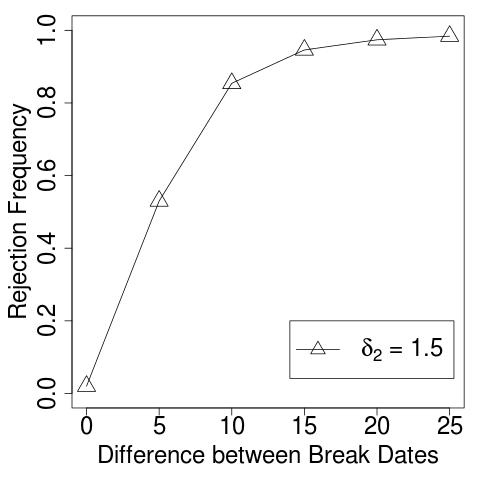}
    \end{subfigure}

    \caption*{Panel C: AR Coefficient = 0.80}  
    \begin{subfigure}[b]{0.31\textwidth}
        \caption{Break Size in EQ1: 0.5}
        \includegraphics[width=\textwidth]{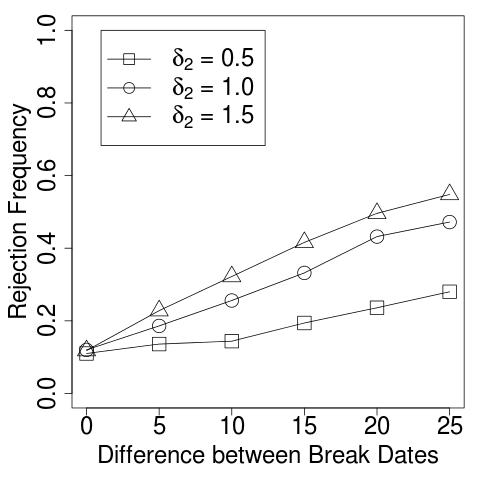}
    \end{subfigure}
    \begin{subfigure}[b]{0.31\textwidth}
        \caption{Break Size in EQ1: 1.0}
        \includegraphics[width=\textwidth]{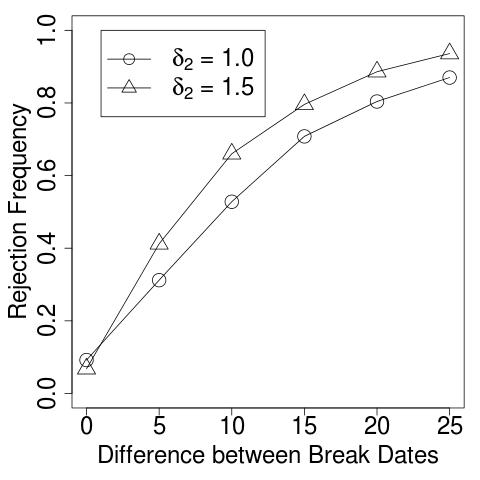}
    \end{subfigure}
    \begin{subfigure}[b]{0.31\textwidth}
        \caption{Break Size in EQ1: 1.5}
        \includegraphics[width=\textwidth]{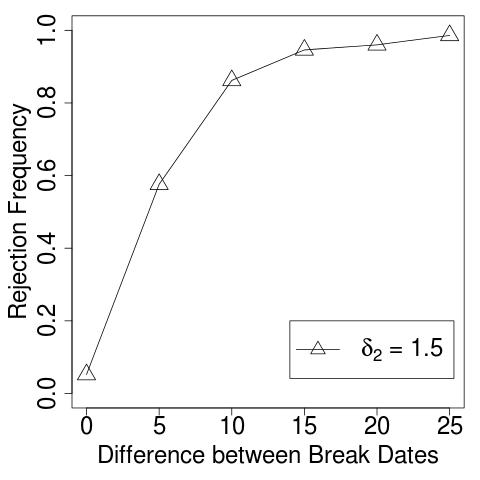}
    \end{subfigure}

    \vspace{0.3cm}
  \begin{minipage}{150mm}
    \textit{Notes:} 
    The data generating process is the bivariate system
    as in (EQ1) and (EQ2) of Table 1.
    The number of observations $T$ is set to 100. 
    The break date $k_{1}$ in (EQ1) is kept fixed at $k_1 = 35$, while
    the break date $k_{2}$ in (EQ2) changes from 30 to 55. 
    The horizontal axis shows the difference between break dates: $k_2
    - k_1$. 
    The AR coefficient $\alpha$ is set to 0.0, 0.4 and 0.8 
    for Panel A, B and C, respectively.
    The break size $\delta_1$ in (EQ1) changes 
    across panel (a)-(c), (d)-(f) and (g)-(i),
    while 
    the break size $\delta_2$ in (EQ2) changes within each panel.
    We use
    $0.5$, $1.0$ and $1.5$ as magnitude of the break size.
  \end{minipage}

\end{figure}

\end{document}